\setlist[enumerate]{format=\normalfont}
\newtheorem{theorem}{Theorem}[section]
\newtheorem{prop}[theorem]{Proposition}
\newtheorem{lemma}[theorem]{Lemma}
\newtheorem{definition}[theorem]{Definition}
\newtheorem{cor}[theorem]{Corollary}
\newtheorem{claim}[theorem]{Claim}
\theoremstyle{definition}
\newtheorem{example}[theorem]{Example}
\newtheorem{setup}[theorem]{Setup}
\newtheorem{remark}[theorem]{Remark}
\newtheorem{notation}[theorem]{Notation}
\numberwithin{equation}{section}
\newcommand{\bmc}{\begin{multicols}}
	\newcommand{\emc}{\end{multicols}}
\definecolor{Pink}{RGB}{230 56 243}
\definecolor{Blue}{RGB}{0 19 147}
\definecolor{Green}{RGB}{66 147 41}
\definecolor{Grey}{RGB}{102 102 102}
\definecolor{Orange}{RGB}{237 107 45}
\definecolor{Red}{RGB}{234 53 159}
\def\mythick{0.5}
\def\EeightThreeScalex{0.43}
\def\EeightThreeScaley{0.5}
\newcommand{\EeightThreeNoLattice}[1][]{%
	\begin{tikzpicture}[xscale=\EeightThreeScalex,yscale=\EeightThreeScaley]
		\coordinate (A) at (4,0);
		\coordinate (B) at (4,2);
		\coordinate (C) at (4,4);
		\coordinate (Op) at (8,-2);
		\coordinate (Ap) at (8,0);
		\coordinate (Bp) at (8,2);
		\coordinate (m1) at (6,4);
		\coordinate (m2) at (6,-2);
		\coordinate (u1) at (intersection cs:
		first line={(C) -- (Bp)},
		second line={(m1) -- (m2)});
		\coordinate (u2) at (intersection cs:
		first line={(A) -- (Op)},
		second line={(m1) -- (m2)});    
		\draw (A)--(Op);
		\draw (Op)--(Bp);
		\draw (Bp)--(C);
		\draw (C) -- (A);
		\draw (B)--(Bp);
		\draw (A)--(Ap);
		\draw (C)--(Op);
		\draw (A)--(Bp);
		\draw (A) -- (u1);
		\draw (Bp) -- (u2);
		\draw (B) -- (u1);
		\draw (Ap) -- (u2);
		\draw (Ap) -- (B);
		\draw (u1) -- (u2);
	\end{tikzpicture}
}
 \tikzset{
	cvertex/.style={circle,draw=black,inner sep=1pt,outer sep=3pt},
	vertex/.style={circle,fill=black,inner sep=1pt,outer sep=3pt},
	DBs/.style={circle,draw=black,circle,fill=black,inner sep=0pt, minimum size=3pt},
	DB/.style={circle,draw=black,circle,fill=black,inner sep=0pt, minimum size=4pt},
	DWs/.style={circle,draw=black,circle,fill=white,inner sep=0pt, minimum size=3pt},
	DWds/.style={circle,draw=black,densely dotted,circle,fill=white,inner sep=0pt, minimum size=3pt},
	DW/.style={circle,draw=black,inner sep=0pt, minimum size=4pt},
	tvertex/.style={inner sep=1pt,font=\scriptsize},
	gap/.style={inner sep=0.5pt,fill=white},
	Ggap/.style={inner sep=0.5pt,fill=green!40!black!20}}
\newcommand{\Anohat}{\operatorname{\mathrm{A}}}
\newcommand{\Bnohat}{\operatorname{\mathrm{B}}}
\newcommand{\Aphat}{\operatorname{\vphantom{\hat{\mathrm{A}}}\mathrm{A}}}
\newcommand{\Bphat}{\operatorname{\vphantom{\hat{\mathrm{B}}}\mathrm{B}}}
\newcommand{\Lambdaphat}{\operatorname{\vphantom{\hat{\Lambda}}\Lambda}}
\newcommand\m{\mathfrak{m}}
\newcommand\p{\mathfrak{p}}
\newcommand\Db{\mathop{\rm{D}^b}}
\newcommand\Dbfl{\mathop{\rm{D}^{\mathrm{b}}_{\fl}}}
\newcommand{\D}{\operatorname{D}}
\newcommand{\smallmod}{\operatorname{mod}}
\newcommand{\Mod}{\operatorname{Mod}}
\newcommand{\RA}{\operatorname{RA}}
\newcommand{\LA}{\operatorname{LA}}
\newcommand{\TW}{\operatorname{\mathsf{Twist}}}
\newcommand{\Geo}{\operatorname{\mathsf{GeoTwist}}}
\newcommand{\Id}{\operatorname{\mathsf{Id}}}
\newcommand{\Ext}{\operatorname{Ext}}
\newcommand{\Supp}{\operatorname{Supp}}
\newcommand{\redu }{\operatorname{red}}
\newcommand{\End}{\operatorname{End}}
\newcommand{\Max}{\operatorname{Max}}
\newcommand{\smallproj}{\operatorname{proj}}
\newcommand{\Hom}{\operatorname{Hom}}
\newcommand{\coh}{\operatorname{coh}}
\newcommand{\Qcoh}{\operatorname{Qcoh}}
\newcommand{\con}{\operatorname{\mathrm{con}}}
\newcommand{\RHom}{\operatorname{{\mathbf R}Hom}}
\newcommand{\res}{\operatorname{res}}
\newcommand{\refl}{\operatorname{ref}}
\newcommand{\fl}{\operatorname{\mathsf{fl}}}
\def\Db{\mathop{\rm{D}^b}\nolimits}
\def\Kb{\mathop{\rm{K}^b}\nolimits}
\renewcommand{\*}{%
	\discretionary {\thinspace\the\textfont2\char1}{}{}%
}
\newcommand{\aff}{\mathsf{aff}}
\newcommand{\scrA}{\EuScript{A}}
\newcommand{\scrC}{\EuScript{C}}
\newcommand{\scrE}{\EuScript{E}}
\newcommand{\scrH}{\EuScript{H}}
\newcommand{\scrJ}{\EuScript{J}}
\newcommand{\scrL}{\EuScript{L}}
\newcommand{\scrO}{\EuScript{O}}
\newcommand{\scrP}{\EuScript{P}}
\newcommand{\scrR}{\EuScript{R}}
\newcommand{\scrT}{\EuScript{T}}
\newcommand{\scrU}{\EuScript{U}}
\newcommand{\scrV}{\EuScript{V}}
\newcommand{\scrW}{\EuScript{W}}
\newcommand{\scrX}{\EuScript{X}}
\newcommand{\scrY}{\EuScript{Y}}
\DeclareMathOperator{\Spec}{Spec}
\begin{document}
\title{Group Actions from Algebraic Flops}
\author{Caroline Namanya}
\address{Caroline Namanya, Makerere University, Kampala, Uganda \& University of Glasgow,  Glasgow, Scotland 
	}
	\email{caronamanya97@gmail.com}
	\begin{abstract}
This paper constructs derived  autoequivalences associated to  an algebraic flopping contraction \(X\to X_{\con}, \) where \(X\) is quasi-projective with only mild singularities. These functors are  constructed naturally  using  bimodule cones, and we prove these cones are locally two-sided tilting complexes by using  the local-global properties  and a key commutative diagram. The main result is that these autoequivalences combine to give an action of the fundamental  group of an associated infinite hyperplane arrangement on the derived category of \(X.\) This  generalises and simplifies \cite{DW3}, by finally  removing reliance on  subgroups, and it also lifts many other results from the complete local setting.
	\end{abstract}
	\maketitle
	\parindent 20pt
	\parskip 0pt

	\section{Introduction}
Over the years, studying autoequivalence of derived categories  of coherent sheaves  has been of increasing interest in algebraic geometry. In \cite{DW1}  using a new invariant \textit{the noncommutative deformation algebra} called the contraction algebra  associated to any contractible rational curve in any 3-fold, a particular  derived autoequivalence  associated to  a general flopping curve was described.

More generally, consider   a global flopping contraction \(X \to X_{\con}\) as in Setup $\ref{global set up}$, where \(X\)  has only mild singularities.   After passing to the   complete local setting by taking the formal fibre (see $\S \ref{specific hyperplanes}$), by \cite{DW3} we can associate two combinatorial objects: a finite hyperplane arrangement $\scrH$ and an infinite hyperplane arrangement $\scrH^{\aff}.$ The question is whether these \(\scrH\) and \(\scrH^{\aff},\) which are local objects, still control the derived autoequivalences of the global \(X.\) The intuition, and main result of \cite{DW3}, is  that there is  a subgroup $K$ of the fundamental group of the complexified complement  \( \pi_{1}(\mathbb{C}^{n} \backslash \scrH_\mathbb{C})\)  which acts on the derived category   \(\Db(\coh X).\) This is  problematic  since \cite{N} proves that \(K\neq \pi_{1} \) in general. 

The main point of this paper  is that the subgroup \(K\) is not important, and indeed we prove that it is possible to construct a full action of  \(\pi_{1} \) directly. A hint  of how to do this  appears in \cite{A}, where  bimodule cone autoequivalences are  constructed using contraction algebra technology. It turns out that this vastly generalises. 

\subsection{Main results}Before stating our main results, all the hard work in this paper goes into establishing the Zariski local situation. As such, consider first  an algebraic flopping contraction \(U\to \Spec R,\) where \(U\) has only  mild singularities (see Setup \ref{01 of 23 april 2023}). It is well known, even in this algebraic setting by \cite{VdB},  that \(U\) is derived equivalent to some \(R\)-algebra $\Lambda.$  As explained below, we will use bimodule cones on $\Lambda$  to construct two-sided tilting complexes. This will then induce derived autoequivalences of \(U,\) as follows.

\begin{theorem}[=\ref{twist is an equiv}, \ref{taction on u}] \label{submain thm in intro}
Consider an algebraic flopping contraction \(U \to \Spec R\) as in Setup \textnormal{\ref{01 of 23 april 2023}}, with associated  finite hyperplane arrangement \(\scrH\) and infinite hyperplane arrangement \(\scrH^{\aff}.\)
\begin{enumerate}
\item For every wall crossing in \(\scrH\) or \(\scrH^{\aff},\) there exists   an autoequivalence of \(\Db(\coh U).\)
\item There exists group homomorphisms \[\begin{array}{c}
	\begin{tikzpicture}
		\node (a1) at (0,0) {$  \uppi_{1}(\mathbb{C}^{n} \setminus \scrH_{\mathbb{C}})  $};
		\node (a2) at (0,-1.5) {$  \uppi_{1}(\mathbb{C}^{n} \setminus \scrH^{\aff}_{\mathbb{C}}) $};
		\node (b1) at (3,0) {$ \mathrm{Auteq}\Db(\coh U)$};
		\draw[->] (a1) to node[below] {$ $} (a2);		
		\draw[->] (a2) to node[below] {$ \scriptstyle g^{\aff}$} (b1);		
		\draw[->] (a1) to node[above] {$ \scriptstyle g$} (b1);				
	\end{tikzpicture}
\end{array}\] 
\end{enumerate}
\end{theorem}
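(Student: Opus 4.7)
The plan is to reduce everything to algebra via Van den Bergh's tilting equivalence $\Db(\coh U) \simeq \Db(\smallmod \Lambda)$, so that autoequivalences of $\Db(\coh U)$ can be constructed as two-sided tilting complexes of $\Lambda$-bimodules. For each wall $w$ of $\scrH$ (and similarly for $\scrH^{\aff}$), associate the subset $I_w$ of simple $\Lambda$-modules crossed by $w$ and the corresponding idempotent subalgebra $\Lambda_{I_w}$. Form the natural multiplication map $\Lambda \otimes^{\mathbf{L}}_{\Lambda_{I_w}} \Lambda \to \Lambda$ of $\Lambda$-bimodules and let $T_w$ be its cone (up to a shift); tensoring with $T_w$ is then the candidate autoequivalence, and part (1) is reduced to showing that each $T_w$ is a two-sided tilting complex.

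To verify the tilting property, the strategy is local-global. Outside the finitely many closed points of $\Spec R$ over which $U \to \Spec R$ is not an isomorphism, the map $\Lambda \otimes^{\mathbf{L}}_{\Lambda_{I_w}} \Lambda \to \Lambda$ is already an isomorphism of bimodules, so the tilting property reduces to a statement at each of these special points. At such a point, completion recovers a formal flopping contraction whose associated bimodule cone is known to be a two-sided tilting complex by the contraction-algebra techniques of \cite{A} and the setup of \cite{DW3}. The content of the step is then that formation of the cone, together with the two natural maps $\Lambda \to \RHom_{\Lambda^{\mathrm{op}}}(T_w,T_w)$ and $\Lambda \to \RHom_{\Lambda}(T_w,T_w)$ that test tilting, all commute with completion. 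This will be encoded in a key commutative diagram comparing the global counit $\Lambda \otimes^{\mathbf{L}}_{\Lambda_{I_w}} \Lambda \to \Lambda$ with its completions at each special closed point, so that the global tilting property follows by faithfully flat descent from the complete local fibres.

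For part (2), I would invoke the Salvetti/Paris presentation of the fundamental group of the complement of a real hyperplane arrangement: one generator $\gamma_w$ per wall with relations supported on codimension-two flats. Sending $\gamma_w \mapsto (-)\otimes^{\mathbf{L}}_{\Lambda} T_w$ gives a homomorphism from the free group on the walls into $\mathrm{Auteq}\Db(\smallmod \Lambda)$, and it remains to check the codimension-two relations. These relations are local on the arrangement: after completing $\Lambda$ at the point of $\Spec R$ corresponding to a given codimension-two flat, they reduce to a smaller-rank flopping contraction where the analogous braid-type relations have already been verified in \cite{DW3}. The same local-global diagram that controls tilting in part (1) then lifts these relations to $\Lambda$ itself, and transporting across the Van den Bergh equivalence gives the homomorphism $g$. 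The map $g^{\aff}$ is constructed identically using walls of $\scrH^{\aff}$, and the commuting triangle comes from the natural surjection $\pi_1(\mathbb{C}^n \setminus \scrH_{\mathbb{C}}) \twoheadrightarrow \pi_1(\mathbb{C}^n \setminus \scrH^{\aff}_{\mathbb{C}})$ induced by the inclusion $\scrH \subseteq \scrH^{\aff}$.

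The decisive obstacle throughout is the local-global step of part (1). In the complete local setting one has direct module-theoretic control (finite length, Matlis duality, projectivity of summands), whereas Zariski-locally one must track the bimodule structure on both sides simultaneously as one passes between the global $\Lambda$ and its completions. Getting the key commutative diagram right, so that both cone formation and the $\RHom$-tests of tilting descend uniformly across all special closed points, is the main technical hurdle; once it is in place, the rest of the theorem follows essentially formally from the corresponding complete local statements.
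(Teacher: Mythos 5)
Your overall strategy is right in outline (reduce via the Van den Bergh tilting equivalence to two-sided tilting complexes of $\Lambda$-bimodules, then argue local-global), but there are two substantive gaps and one direction error.

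First, the construction of the candidate bimodule complex does not cover the theorem's claim. You build $T_w$ as the cone of the multiplication map $\Lambda \otimes^{\mathbf{L}}_{\Lambda_{I_w}} \Lambda \to \Lambda$ for an ``idempotent subalgebra'' $\Lambda_{I_w}$ attached to a wall $w$. That construction only makes sense when $w$ is a wall of the fixed chamber $C_{+}$: those walls do correspond to idempotents of $\hat{\Lambda}$ (and even then not of $\Lambda$, only of its completion). The theorem, however, demands an autoequivalence for \emph{every} wall crossing in $\scrH$ or $\scrH^{\aff}$, and walls of a general chamber $D$ correspond to idempotents of the complete local algebra $\Bnohat=\End_\scrR(N_D)$ attached to that chamber, which has no relation to a subalgebra of $\Lambda$. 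The paper resolves this (Setup~\ref{bi setup} and Proposition~\ref{01 of  06 December 2022}) by first choosing an atom $\upalpha\colon C_{+}\to D$, transporting the quotient $\Bnohat\to \Bnohat_i$ along the corresponding mutation functors $\Upphi_{\upalpha}$ to produce a bimodule map of $\hat{\Lambda}$-bimodules, and only then precomposing with the ring map $\Lambda\to\hat{\Lambda}$ to land in $\D(\Lambda\text{-}\Lambda)$. In particular, the relevant bimodule is of the form ${}_{\Lambda}\hat{\Lambda}\otimes(\mathbb{M}\otimes\uptau_{\upalpha}\otimes Z_{\upalpha,i})\otimes\hat{\Lambda}_{\Lambda}$; your $\Lambda\otimes^{\mathbf{L}}_{\Lambda_{I_w}}\Lambda$ is not of this shape and the extra data $\upalpha$ is essential, not a bookkeeping device. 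The local-global step you then describe (nothing happens away from $\m$, completion at $\m$ reduces to the known complete local autoequivalence) is indeed the paper's strategy in \S5--6, but the hard content is precisely the ``key commutative diagram'' $\mathrm{F}\circ\mathrm{G}_{\upalpha,i}=\TW_{\upalpha,i}\circ\mathrm{F}$ (Proposition~\ref{02 0f 06 march 2023}), which requires showing $C\otimes_{\Lambda}\hat{\Lambda}\cong\hat{\Lambda}\otimes\mathbb{M}\otimes\uptau_{\upalpha}\otimes I_i\otimes\uptau^{*}_{\upalpha}\otimes\mathbb{M}^{*}$ as $\Lambda$-$\hat{\Lambda}$ bimodules; because the cone is built with $\hat{\Lambda}$-tensors inside, this is not an instance of ``cone commutes with completion'' and needs the explicit bimodule-level argument using Lemma~\ref{04 0f 24 january 2023}.

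Second, your proof of part (2) goes via the Salvetti--Paris presentation and verifying codimension-two relations by passing to smaller-rank flops. The paper does not do this, and the proposed route has a gap: codimension-two flats of the infinite arrangement $\scrH^{\aff}$ do not in any obvious way localise to smaller flopping contractions (the affine translates have no geometric counterpart in $\Spec R$), so the reduction you sketch is not available in the affine case. The paper instead takes any relation $\ell^{\pm1}_{\upalpha_1,i_1}\cdots\ell^{\pm1}_{\upalpha_t,i_t}=1$, uses the complete local result of \cite{IW7} plus the key diagram to show the corresponding product of geometric twists fixes every skyscraper $\scrO_x$ (Lemma~\ref{sky scrapper sheaf on curve} for $x\notin C$, the commutative diagram for $x\in C$), concludes it is $-\otimes\scrL$ for a line bundle, and then shows $\scrL\cong\scrO$ by a reflexivity argument (Proposition~\ref{geo is identity}). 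This sidesteps any choice of presentation of $\uppi_1$ and works uniformly for $\scrH$ and $\scrH^{\aff}$. Finally, a small point: the map between the two fundamental groups in the statement is not a surjection --- adding hyperplanes shrinks the complement and enlarges $\uppi_1$, so the natural homomorphism $\uppi_1(\mathbb{C}^n\setminus\scrH_{\mathbb{C}})\to\uppi_1(\mathbb{C}^n\setminus\scrH^{\aff}_{\mathbb{C}})$ is an inclusion (of the finite-type group into the affine-type group), not a quotient.
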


In addition to the  subgroup \(K\)  defined in \cite{DW3} being problematic, the generalisation  to the algebraic setting  comes with two costs, namely  there is lack of Krull-Schimdt (since \(R\) is not complete local), and  there are  no obvious algebraic objects which correspond to the chambers of \(\scrH\) and \(\scrH^{\aff},\) and so  no obvious  way of producing autoequivalences via wall crossing.
	
With   Theorem \ref{submain thm in intro}  in hand, all results globalise.  Now, let  \(h\colon X \to  X_{\con} \)  be a global 3-fold  flopping contraction as in Setup \ref{global set up}. As explained in $\S$\ref{section 8}, in this global setting there is a product of finite arrangements \(\mathfrak{H}\) and a product of infinite arrangements \(\mathfrak{H}^{\aff}.\)    We remark that  the following  generalises \cite[Theorem 1.2]{DW3} in two ways: in the finite case \(\mathfrak{H}\) it removes the assumption that the curves are individually floppable, and the infinite case \(\mathfrak{H}^{\aff}\) is new. The following is our main result. \begin{theorem}[=\ref{global group action}]\label{global theorem in intro} Under the global assumptions of Setup   \textnormal{\ref{global set up}}, there exists  group homomorphisms
	
 \[\begin{array}{c}
	\begin{tikzpicture}
		\node (a1) at (0,0) {$  \uppi_{1}(\oplus \mathbb{C}^{ n_{k}} \setminus \mathfrak{H}_{\mathbb{C}})  $};
		\node (a2) at (0,-1.5) {$  \uppi_{1}(\oplus \mathbb{C}^{ n_{k}} \setminus \mathfrak{H}^{\aff}_{\mathbb{C}}) $};
		\node (b1) at (4,0) {$ \mathrm{Auteq}\Db(\coh X)$};
		\draw[->] (a1) to node[below] {$ $} (a2);		
		\draw[->] (a2) to node[below] {$\scriptstyle m^{\aff}$} (b1);		
		\draw[->] (a1) to node[above] {$\scriptstyle  m$} (b1);				
	\end{tikzpicture}
\end{array}\] 

\end{theorem}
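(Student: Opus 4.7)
The plan is to reduce Theorem \ref{global theorem in intro} to the Zariski-local Theorem \ref{submain thm in intro} applied at each singular formal fibre of $h$, and then to glue. Since $h\colon X\to X_{\con}$ is an isomorphism outside a finite set $\{p_1,\ldots,p_r\}\subset X_{\con}$, Setup \ref{global set up} provides, for each $k$, a Zariski-open neighbourhood $U_k\subseteq X$ of $h^{-1}(p_k)$ such that $U_k\to \Spec R_k$ is an algebraic flopping contraction satisfying Setup \ref{01 of 23 april 2023}. Because the hyperplanes of $\scrH_k$ and $\scrH_k^{\aff}$ only involve the coordinates of their own factor, $\mathfrak{H}$ and $\mathfrak{H}^{\aff}$ are genuine product arrangements, and a standard van Kampen argument yields
\[
\uppi_{1}\!\Big(\textstyle\bigoplus_k \mathbb{C}^{n_k}\setminus \mathfrak{H}_\mathbb{C}\Big)\;\cong\;\prod_{k}\uppi_{1}(\mathbb{C}^{n_k}\setminus (\scrH_k)_\mathbb{C}),
\]
and similarly for $\mathfrak{H}^{\aff}$.

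For each $k$, Theorem \ref{submain thm in intro} furnishes group homomorphisms $g_k$ and $g_k^{\aff}$ landing in $\mathrm{Auteq}\Db(\coh U_k)$. The autoequivalences in their image are induced by two-sided tilting complexes obtained as bimodule cones on the tilting algebra $\Lambda_k$, and these bimodules agree with the identity bimodule on the isomorphism locus of $U_k\to \Spec R_k$. Standard Zariski-local gluing of Fourier--Mukai kernels on $X\times_{X_{\con}}X$ therefore extends each such autoequivalence uniquely to an autoequivalence of $\Db(\coh X)$ that restricts to the identity on $X\setminus h^{-1}(p_k)$, giving extension maps $\iota_k\colon \mathrm{Auteq}\Db(\coh U_k)\to \mathrm{Auteq}\Db(\coh X)$. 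Composing gives homomorphisms $\iota_k\circ g_k$ and $\iota_k\circ g_k^{\aff}$ out of each local $\uppi_1$-factor.

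The main obstacle is to verify that for $k\neq k'$ the extended autoequivalences $\iota_k(g_k(\alpha))$ and $\iota_{k'}(g_{k'}(\beta))$ commute in $\mathrm{Auteq}\Db(\coh X)$. Their defining kernels differ from the structure sheaf of the diagonal only above the disjoint points $p_k$ and $p_{k'}$, so in either order of convolution the kernel on the fibre product restricts to the identity over the relevant exceptional loci, and one verifies directly that both compositions compute the same object. Granted this commutativity, the universal property of the direct product upgrades the families $(\iota_k\circ g_k)$ and $(\iota_k\circ g_k^{\aff})$ to the desired homomorphisms $m$ and $m^{\aff}$, and commutativity of the triangle with the natural map $\uppi_{1}(\bigoplus\mathbb{C}^{n_k}\setminus \mathfrak{H}_\mathbb{C})\to \uppi_{1}(\bigoplus\mathbb{C}^{n_k}\setminus \mathfrak{H}^{\aff}_\mathbb{C})$ follows factorwise from Theorem \ref{submain thm in intro}(2).
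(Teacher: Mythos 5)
Your high-level structure mirrors the paper's: decompose $\uppi_1$ of the product arrangement as a product of the factors, reduce to the Zariski-local statement of Theorem \ref{submain thm in intro}, extend each local autoequivalence to $X$, and check that autoequivalences coming from different singular points commute. However, two of the steps you invoke as routine are precisely where the substance of the paper's $\S$\ref{section 8} lies, and as stated your argument has genuine gaps there.

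First, the assertion that ``standard Zariski-local gluing of Fourier--Mukai kernels on $X\times_{X_{\con}}X$'' extends each $\Geo_{\upalpha,i}$ to an autoequivalence of $\Db(\coh X)$ is not a standard fact: derived categories of coherent sheaves do not satisfy Zariski descent, so one cannot simply glue objects of $\Db(\coh(U_k\times U_k))$ with the diagonal on the complement. The paper sidesteps this by never gluing kernels on a product at all. Instead it works with the sheaf of noncommutative algebras $\scrA=\mathcal{E}nd_{X_{\con}}(h_*\scrP^{*})$ on the base $X_{\con}$, constructs the bimodule map $\scrA\to i_*Q$ globally in the abelian category $\Qcoh(X_{\con},\scrA\otimes_\mathbb{C}\scrA^{\mathrm{op}})$, takes the cone $\scrC$ there, and then checks on an affine cover of $X_{\con}$ that $\mathbf{R}\mathcal{H}om_{\scrA}(\scrC,-)$ restricts to an equivalence (Definition \ref{twist on X defn}, Theorem \ref{global equivalence}, following \cite{DW4}). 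Your proposal omits this construction entirely, and it is not recoverable from the phrase ``standard gluing.''

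Second, the claim that the extension is unique, and hence that there is a \emph{group homomorphism} $\iota_k\colon \mathrm{Auteq}\Db(\coh U_k)\to\mathrm{Auteq}\Db(\coh X)$, is both overstated and unjustified. Overstated, because no such map exists on all of $\mathrm{Auteq}\Db(\coh U_k)$: only autoequivalences restricting to the identity on $U_k\setminus C_k$ can possibly extend in the required way. Unjustified, because uniqueness (equivalently, that compositions of generators which are trivial locally remain trivial globally) is exactly the content of Theorem \ref{ theorem fixed pk}: one must show the composite fixes all skyscraper sheaves (using Lemma \ref{sky scrapper sheaf on curve} and the commutative diagram intertwining the $\mathrm{G}_{\upalpha,i}$, $\TW_{\upalpha,i}$ and $\TW_{X,\upalpha,j}$), conclude via \cite{DW1,HUY,T} that it is $-\otimes\scrL$ for a line bundle $\scrL$, and then apply the codimension-two reflexivity argument of Proposition \ref{geo is identity} to deduce $\scrL\cong\scrO_X$. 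This chain is the actual proof that $m_k$ is a homomorphism, and it cannot be absorbed into the word ``uniquely.'' Your treatment of the commutativity between different $k$ is closer to acceptable, since the paper itself simply cites \cite[Remark 5.6]{DW3} at that point, but the two preceding steps need to be supplied.
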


 \subsection{Bimodule  Constructions}\label{intro to bimodule constructions} Here we briefly explain the construction of the functors in the image of \(m\) and \(m^{\aff}\) in Theorem \ref{global theorem in intro}. After reverting to the formal fibre in the sense of  $\S$\ref{specific hyperplanes}, there is an associated hyperplane arrangement which depends on Dynkin data (see $\S$\ref{specific hyperplanes} and \cite{IW7}). Given a wall \(i\) in some chamber \(D\) we can find an atom  \(\upalpha\) from a fixed chamber \(C_{+}\) to \(D\)  followed by  monodromy  around wall \(i.\) One example is  illustrated in  the following diagram\[
 \begin{tikzpicture}
 	\clip (-2.75,-3.25) rectangle (3,3.25);
 	\node at (0,0)
 	{
 		$\begin{tikzpicture}[>=stealth,xscale=\EeightThreeScalex,yscale=\EeightThreeScaley, extended line/.style={shorten >=-#1,shorten <=-#1}]
 			\foreach \y in {-4,-2,-1,0,1,2,4}
 			\foreach \x in {-4,-2,0,2,4}
 			{
 				\node at (4*\x,4*\y) {$\EeightThreeNoLattice$};
 			}
 			\foreach \y in {-3,-2,-1,0,1}
 			\foreach \x in {-3,-1,1,3}
 			{
 				\node at (4*\x,2+4*\y) {$\EeightThreeNoLattice$};
 			}
 			\node (b1) at (-0.8,-0.1) {$ C_{+}$};
 			\node (b2) at (4.5,3.3) {$D$};
 			\node (b3) at (5.3,2.95) {}; 
 			\draw[line width=\mythick mm, Pink,extended line=5cm] (0,0) -- (-4,-2);
 			\draw[line width=\mythick mm, Pink,extended line=5cm] (0,-1) -- (-4,-1);
 			\draw[line width=\mythick mm, Pink,extended line=5cm] (0,-2) -- (-4,0);
 			\draw[line width=\mythick mm, Pink,extended line=5cm] (-1,-2.5) -- (-3,0.5);
 			\draw[line width=\mythick mm, Pink,extended line=5cm] (-3,-2.5) -- (-1,0.5);
 			\draw[line width=\mythick mm, Pink,extended line=5cm] (-2,-3) -- (-2,1);
 			\draw[line width = 1 pt, in=,out=60,loop above,segment length=6mm,->] (b3) to (b3);
 			\draw [line width = 1 pt,->,decorate, decoration={snake,amplitude=0.5mm,segment length=16mm,post length=1mm}] (-0.9,0) -- node [above] {$\upalpha$}(5.25,3.2); 
 		\end{tikzpicture}$};
 \end{tikzpicture}
 \]
   We then associate an endomorphism  algebra \(\Bnohat\) to the chamber \(D.\)  This algebra inherits  \(n+1\) primitive  idempotents \(e_{i},\)  (for more details see  Setup \ref{bi setup}), and there is a ring homomorphism \(\Bnohat \to \Bnohat_{i}, \)  where \(\Bnohat_{i}=\Bnohat/ (1-e_{i}).\)
 
For \(f\colon U\to \Spec R\)   an  algebraic flopping contraction of Setup \ref{01 of 23 april 2023}, to construct an autoequivalence on \(U\) associated to the above monodromy, we consider an endomorphism  algebra \(\Lambda\)  that is derived equivalent to \(U,\) as in \S \ref{geometric setup}.  For any choice of \((\upalpha,i)\) as above  (see also Setup \ref{bi setup}), we first define a functor diagram 
\[\begin{array}{c}
	\begin{tikzpicture}[looseness=1,bend angle=20]
		\node (a1) at (0,0) {$\D(\Bnohat_{i})$};
		\node (a2) at (3,0) {$\D( \Lambda)$};
		\draw[->] (a1) -- node[gap] {$\scriptstyle \Uppsi_{\upalpha,i}$} (a2);
		\draw[->,bend right] (a2) to node[above] {$\scriptstyle \Uppsi_{\upalpha,i}^{\LA}$} (a1);
		\draw[->,bend left] (a2) to node[below] {$\scriptstyle \Uppsi_{\upalpha,i}^{\RA}$} (a1);										
	\end{tikzpicture}
\end{array},\] and then construct  a natural transformation \(\Uppsi_{\upalpha,i}\circ \Uppsi_{\upalpha,i}^{\RA}\to \Id_{\Lambda}\) by exhibiting  a certain bimodule map
\[
\Lambda \to {}_{\Lambda}\hat{\Lambda} \otimes \left(  \mathbb{M}\otimes  \uptau_{\upalpha}\otimes Z_{\upalpha,i}\right)\otimes\hat{\Lambda}_{\Lambda},
\]	in the derived category of \(\Lambda\)-\(\Lambda\) bimodules. Thus taking the cone in the  derived category of \(\Lambda\)-\(\Lambda\) bimodules  gives a triangle 
\[
	C_{\upalpha,i} \to {}_{\Lambda}\Lambda_{\Lambda} \to {}_{\Lambda}\hat{\Lambda} \otimes^{\bf{L}}_{\hat{\Lambda}} \left(  \mathbb{M}\otimes^{\bf{L}}_{\Aphat}  \uptau_{\upalpha}\otimes^{\bf{L}}_{\Bphat} Z_{\upalpha,i} \right)\otimes^{\bf{L}}_{\hat{\Lambda}} \hat{\Lambda}_{\Lambda}\to  C_{\upalpha,i}[1].
\] Define \(\TW_{\upalpha,i}\colonequals   \RHom ({}_{\Lambda}C_{\upalpha,i_{\Lambda}},-) \) and   \(\TW^{*}_{\upalpha,i}\colonequals  -\otimes^{\bf{L}}_{\Lambda} C_{\upalpha,i_{\Lambdaphat}}.\)   The following  is the main technical  result.
\begin{theorem}[=\ref{02 0f 06 march 2023}]\label{local and zarisiki equivalence}
\({}_{\Lambda}C_{\Lambda}={}_{\Lambda}C_{\upalpha,i_{\Lambda}}\)    is a two-sided tilting complex,  giving  rise to the autoequivalence \(\TW_{\upalpha,i}\)  of \(\Db (\smallmod  \Lambda).\)  Furthermore, this  fits in  a commutative diagram {\scriptsize
	\[
\begin{array}{c}
\begin{tikzpicture}
\node (a1) at (0,0) {$\D(\hat{\Lambda})$};
\node (a2) at (2,0) {$\D(\Anohat) $};
\node (a3) at (4,0) {$\D(\Bnohat)$};
\node (a4) at (6,0) {$\D(\Bnohat)$};
\node (a5) at (8,0) {$\D(\Anohat)$};
\node (a6) at (10,0) {$\D(\hat{\Lambda})$};
\node (b1) at (0,-1.5) {$\D(\Lambda)$};
\node (b2) at (10,-1.5) {$\D(\Lambda)$};
\draw[->] (a1) to node[above] {$\scriptstyle  \mathbb{F}^{-1}$} (a2);
\draw[->] (a2) to node[above] {$\scriptstyle \Upphi_{\upalpha}^{-1}$} (a3);
\draw[->] (a3) to node[above] {$\scriptstyle {\Upphi}_{i}^{2}$} (a4);
\draw[->] (a4) to node[above] {$\scriptstyle \Upphi_{\upalpha}$} (a5);
\draw[->] (a5) to node[above] {$\scriptstyle \mathbb{F}$} (a6);
\draw[->] (b1) to node[above] {$\scriptstyle\TW_{\upalpha,i}$} (b2);
\draw[<-] (b1) to node[left] {$\scriptstyle \mathrm{F}$} (a1);
\draw[<-] (b2) to node[right] {$\scriptstyle  \mathrm{F} $} (a6);
\end{tikzpicture}			
\end{array}
\] } where the top functor is a composition of mutation and Morita equivalences.
\end{theorem}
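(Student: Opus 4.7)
The plan is to reduce the two-sided tilting property of ${}_\Lambda C_\Lambda$ to the complete local setting, where bimodule cones and mutation functors are well understood, and then transport it back to $\Lambda$ via a local--global argument. Concretely, I would first unwind the defining triangle of $C_{\upalpha,i}$: since its middle term already has the form ${}_\Lambda\hat\Lambda\otimes^{\bf{L}}_{\hat\Lambda}(-)\otimes^{\bf{L}}_{\hat\Lambda}\hat\Lambda_\Lambda$, applying $\hat\Lambda\otimes^{\bf{L}}_\Lambda-\otimes^{\bf{L}}_\Lambda\hat\Lambda$ produces a triangle of $\hat\Lambda$-bimodules that matches, term by term, the bimodule cone of the complete local theory. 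This identifies the completion $\hat C_{\upalpha,i}$ with a known object $\hat C$ and, since $\hat\Lambda$ agrees with $\Lambda$ away from the exceptional locus of $U\to\Spec R$, shows that $C_{\upalpha,i}$ itself agrees with $\Lambda$ off that locus. The complete local chamber theory then gives that $\hat C$ is a two-sided tilting complex over $\hat\Lambda$ with $\RHom_{\hat\Lambda}(\hat C,-)$ factoring, via the Morita equivalence $\mathbb F$, as the composition $\Upphi_\upalpha\circ\Upphi_i^2\circ\Upphi_\upalpha^{-1}$ of mutations across the atom $\upalpha$ and monodromy at wall $i$; this essentially synthesises \cite{DW3} with the bimodule cone construction of \cite{A}.

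The crucial step is lifting the tilting property from $\hat\Lambda$ back to $\Lambda$. Because $C_{\upalpha,i}\simeq\Lambda$ outside the exceptional locus, the defining tilting conditions --- perfectness, vanishing of $\Ext^{>0}(C,C)$ on both sides, and invertibility of the canonical bimodule map $C\otimes^{\bf{L}}_\Lambda C^\vee\to\Lambda$ --- only need to be verified at the maximal ideals of $R$ lying on the flopping locus, and at each such ideal the completion map is faithfully flat. Each condition therefore descends from its known counterpart for $\hat C$. I expect this to be the main obstacle: unlike in the complete local setting there is no Krull--Schmidt and no chamber algebra playing the role of $\Bnohat$ globally, so the bimodule cone cannot be decomposed and must be handled as a single object throughout, with extra care because it is two-sided rather than one-sided tilting that must be descended.

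Once $C$ is established as a two-sided tilting complex, the commutative diagram follows: the identification of $\hat C_{\upalpha,i}$ with $\hat C$ matches $\TW_{\upalpha,i}=\RHom({}_\Lambda C_{\upalpha,i_\Lambda},-)$ with $\RHom_{\hat\Lambda}(\hat C,-)$ after transport through $\mathrm F$, and the complete local analysis of $\hat C$ identifies the latter with the stated composition $\mathbb F\circ\Upphi_\upalpha\circ\Upphi_i^2\circ\Upphi_\upalpha^{-1}\circ\mathbb F^{-1}$ of Morita and mutation equivalences forming the top row.
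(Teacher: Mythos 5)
Your overall strategy --- identify the completion of $C$ with the complete local bimodule cone, then descend the tilting property via a local--global argument --- is indeed the skeleton of the paper's proof. However, the key technical step you propose fails as stated. You claim that applying $\hat\Lambda\otimes^{\bf L}_\Lambda-\otimes^{\bf L}_\Lambda\hat\Lambda$ to the defining triangle $C\to\Lambda\to{}_\Lambda\hat\Lambda\otimes(\cdots)\otimes\hat\Lambda_\Lambda$ produces a triangle of $\hat\Lambda$-bimodules matching the complete local one term by term. This is not true for the middle term: ${}_\Lambda\Lambda_\Lambda$ is sent to $\hat\Lambda\otimes^{\bf L}_\Lambda\hat\Lambda$, which is \emph{not} isomorphic to $\hat\Lambda$ (for the same reason $\hat R\otimes_R\hat R\not\cong\hat R$). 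The paper instead tensors only on \emph{one} side, by $-\otimes^{\bf L}_\Lambda\hat\Lambda_{\hat\Lambda}$, obtaining a triangle of $\Lambda$-$\hat\Lambda$ bimodules in which $\Lambda\mapsto\Lambda\otimes_\Lambda\hat\Lambda=\hat\Lambda$ as desired, and then uses Lemma~\ref{04 0f 24 january 2023}\eqref{04 0f 24 january 2023 two} --- crucially relying on the finite length of $Z_{\upalpha,i}$ --- to collapse the spurious factor $Z_{\upalpha,i}\otimes_{\hat\Lambda}\hat\Lambda\otimes_\Lambda\hat\Lambda\cong Z_{\upalpha,i}$ in the third term. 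This one-sided approach is exactly what is needed, because the vertical functors in the commutative diagram are $\mathrm{F}=-\otimes^{\bf L}_{\hat\Lambda}\hat\Lambda_\Lambda$, which is inherently one-sided; identifying $C\otimes_\Lambda\hat\Lambda$ as a $\Lambda$-$\hat\Lambda$ bimodule suffices, and working with $\hat\Lambda$-$\hat\Lambda$ bimodules overcommits.

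A second, smaller gap: you leap from ``$C_{\upalpha,i}\simeq\Lambda$ off the exceptional locus'' to being able to check tilting (including perfectness) only at the maximal ideal. Perfectness of $C_\Lambda$ is not automatic and is a separate, nontrivial step --- the paper's Proposition~\ref{02 0f 23 april 2023} uses the equivalence $-\otimes^{\bf L}_{\Bnohat}I_i$ (Claim~\ref{infinite Ii}), the two-out-of-three property, and the characterisation of perfect objects as homologically finite ones, to show $M\otimes\hat\Lambda_\Lambda\in\Kb(\smallproj\Lambda)$ and hence $C_\Lambda\in\Kb(\smallproj\Lambda)$. Similarly the isomorphism $\Lambda\to\End_{\D(\Lambda)}(C)$ required for two-sided tilting is argued in Theorem~\ref{twosided tilt thm} via the universal property of completion and a careful diagram chase, not simply faithful flatness of $R_\m\to\hat R$. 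These details need to be supplied for the argument to close.
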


The commutative diagram above intertwines the complete local monodromy  (in the top functor) with the Zariski local autoequivalence \(\TW_{\upalpha,i}.\) In case of the finite arrangement \(\scrH,\) these twist functors also behave well with respect to the contraction algebra equivalences of August \cite{A}. As notation, let \(\Lambda_{\con}\) be the contraction algebra of \cite{DW1}.

\begin{cor}[=\ref{jenny and caro diag }]\label{jenny and caro diag in intro}
For any choice of \((\upalpha,i)\) in the finite arrangement \(\scrH\) the following diagram commutes
\[
\begin{tikzpicture}
\node (a1) at (0,0) {$\D(\hat{\Lambda}_{\mathrm{con}})$};
\node (a2) at (3,0) {$\D(\hat{\Lambda}_{\mathrm{con}})$};
\node (c1) at (0,-1.5) {$\D(\Lambda)$};
\node (c2) at (3,-1.5) {$\D(\Lambda)$};
\draw[->] (a1) to node[above] {$\scriptstyle  \mathrm{J}_{\upalpha,i}$} (a2);
\draw[->] (c1) to node[above] {$\scriptstyle \TW_{\upalpha,i}$} (c2);
\draw[->] (a1) to node[right] {$ $} (c1);
\draw[->] (a2) to node[left] {$ $} (c2);							
\end{tikzpicture}
\]where \(\mathrm{J}_{\upalpha,i}\) are the compositions of the standard equivalences of \cite{A}, recalled  in $\S\textnormal{\ref{section key commutative diagram}}.$
\end{cor}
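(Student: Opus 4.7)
The plan is to deduce this from Theorem \ref{local and zarisiki equivalence} combined with the explicit construction of August's functors \(\mathrm{J}_{\upalpha,i}\) in \cite{A}. The vertical arrows in the square are the natural functors induced by the completion \(\Lambda \to \hat{\Lambda}\) and the surjection \(\hat{\Lambda} \twoheadrightarrow \hat{\Lambda}_{\con};\) since \(\hat{\Lambda}_{\con}\) is finite dimensional, its modules are automatically \(\Lambda\)-modules via this composition, and the vertical embeddings are fully faithful. In particular, commutativity of the outer rectangle reduces to commutativity after the further application of the formal fibre functor \(\mathrm{F}\) appearing in Theorem \ref{local and zarisiki equivalence}.

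First, I would invoke the key commutative diagram of Theorem \ref{local and zarisiki equivalence} to rewrite \(\TW_{\upalpha,i}\) as \(\mathrm{F}\circ\mathbb{F}\circ \Upphi_{\upalpha}\circ \Upphi_{i}^{2}\circ \Upphi_{\upalpha}^{-1}\circ \mathbb{F}^{-1}\circ \mathrm{F}^{-1}.\) By construction in \cite{A}, August's functor \(\mathrm{J}_{\upalpha,i}\) on \(\D(\hat{\Lambda}_{\con})\) is defined as the analogous composition on the contraction algebra, where each mutation or Morita equivalence in the top row is replaced by its canonical shadow obtained through the quotient \(\hat{\Lambda}\twoheadrightarrow\hat{\Lambda}_{\con}.\) Thus the task reduces to verifying commutativity of a square
\[
\begin{array}{c}
\begin{tikzpicture}
\node (a1) at (0,0) {$\D(\hat{\Lambda}_{\con})$};
\node (a2) at (3.3,0) {$\D(\hat{\Lambda}'_{\con})$};
\node (b1) at (0,-1.2) {$\D(\hat{\Lambda})$};
\node (b2) at (3.3,-1.2) {$\D(\hat{\Lambda}')$};
\draw[->] (a1) to node[above] {$\scriptstyle \mathrm{J}$} (a2);
\draw[->] (b1) to node[below] {$\scriptstyle \Upphi$} (b2);
\draw[->] (a1) to (b1);
\draw[->] (a2) to (b2);
\end{tikzpicture}
\end{array}
\]for each elementary mutation or Morita equivalence \(\Upphi\) in the top row, with \(\mathrm{J}\) its contraction-algebra shadow from \cite{A}, and then pasting the resulting five squares.

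Each such square is essentially built into August's definitions, since mutation at the level of \(\hat{\Lambda}\) is intertwined with mutation at the level of the contraction algebra via the natural quotient. The main obstacle is the endpoint compatibility: one must verify that the bimodule cone \({}_{\Lambda}C_{\upalpha,i_{\Lambda}}\) defining \(\TW_{\upalpha,i}\) descends, under \(-\otimes_{\hat{\Lambda}}^{{\bf L}}\hat{\Lambda}_{\con},\) to the two-sided tilting complex underlying \(\mathrm{J}_{\upalpha,i}.\) Granted that both sides are built from the same atom-plus-monodromy datum \((\upalpha,i),\) this should follow from the bimodule formula in $\S$\ref{intro to bimodule constructions}, after a direct calculation confirming that the quotient by \(1-e_{i}\) in \(\Bnohat\) is compatible with the surjection \(\hat{\Lambda}\twoheadrightarrow\hat{\Lambda}_{\con}.\) Once these elementary squares are in place, the final diagram chase is routine.
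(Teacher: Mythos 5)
Your overall strategy matches the paper's: factor the vertical restrictions through $\D(\hat{\Lambda})$, then paste the intertwining square coming from August \cite{A} on top of the key commutative diagram of Proposition \ref{02 0f 06 march 2023}\eqref{02 0f 06 march 2023 two}. That is exactly how the paper proceeds. However, two steps in your write-up do not hold up.

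First, the claim that $\TW_{\upalpha,i}$ can be ``rewritten'' as $\mathrm{F}\circ\mathbb{F}\circ\Upphi_{\upalpha}\circ\Upphi_{i}^{2}\circ\Upphi_{\upalpha}^{-1}\circ\mathbb{F}^{-1}\circ\mathrm{F}^{-1}$ does not make sense: $\mathrm{F}=\RHom_{\hat{\Lambda}}({}_{\Lambda}\hat{\Lambda},-)\cong -\otimes^{\bf L}_{\hat{\Lambda}}\hat{\Lambda}_{\Lambda}$ is restriction of scalars along $\Lambda\to\hat{\Lambda}$, which is \emph{not} an equivalence, so $\mathrm{F}^{-1}$ is undefined. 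Proposition \ref{02 0f 06 march 2023}\eqref{02 0f 06 march 2023 two} gives only the intertwining $\TW_{\upalpha,i}\circ\mathrm{F}\cong\mathrm{F}\circ(\mathbb{F}\circ\Upphi_{\upalpha}\circ\Upphi_i^2\circ\Upphi_{\upalpha}^{-1}\circ\mathbb{F}^{-1})$, not a formula for $\TW_{\upalpha,i}$ itself. The good news is that this intertwining relation is all the pasting argument actually needs.

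Second, the ``endpoint compatibility'' you flag as ``the main obstacle'' --- checking that the $\Lambda$-bimodule cone $C_{\upalpha,i}$ descends under $-\otimes^{\bf L}_{\hat{\Lambda}}\hat{\Lambda}_{\con}$ to the two-sided tilting complex underlying $\mathrm{J}_{\upalpha,i}$ --- is neither needed nor cleanly posed ($C_{\upalpha,i}$ is a $\Lambda$-bimodule, and there is no natural base change of it to a $\hat{\Lambda}_{\con}$-bimodule being invoked anywhere in the paper). The commutativity of the outer square is established entirely at the level of functors, by composing two commutative rectangles: the top rectangle $\D(\hat{\Lambda}_{\con})\rightrightarrows\D(\Anohat)$ commutes by \cite{A} (this is precisely where August's intertwining of $F_{\upalpha},F_{i}$ with mutation enters), and the bottom rectangle is Proposition \ref{02 0f 06 march 2023}\eqref{02 0f 06 march 2023 two}. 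Once those two are in hand, the outer square in the statement follows by pasting; no further bimodule-level comparison is required. You should replace the ``endpoint compatibility'' step by this pasting observation, and delete the $\mathrm{F}^{-1}$ rewriting.
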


The key point of Theorem \ref{local and zarisiki equivalence} is that it is much more general.  Contraction algebras only exist in the finite \(\scrH,\) whereas  the autoequivalences in the infinite \(\scrH^{\aff}\) in Theorem \ref{local and zarisiki equivalence} are much more general.

\subsection*{Conventions} \(R\) is a normal  isolated cDV singularity. We will drop both super and subscripts from the tensors as much as possible, whilst still maintaining their natural meanings and contexts in the background.  When tensoring by bimodules, we will suppress the obvious module structure, so for a bimodule \( {}_{\Lambda}X_{\Gamma}, \) we write \( \otimes_{\Lambda} \Lambda_{\Gamma}.\)
\subsection*{Acknowledgements}This work forms part of the author's  PhD,  and was funded  by  an IMU  Breakout Graduate Fellowship. The author would like to recognize support from  the ERC Consolidator Grant 101001227 (MMiMMa)  for  a one year visit as a PhD student 2021–22 at University of Glasgow, where part of this work was done. The author is immensely grateful to her PhD supervisors Michael Wemyss and David Ssevviiri for their helpful guidance, and thanks Wahei Hara and Jenny August for the many helpful discussions.

\section{Preliminaries}
The following section gives definitions, terminologies and notation that will  be used.
\subsection{Geometric setup}\label{geometric setup}
Recall that a projective birational morphism \(f\colon  X \to Y\) is small if the exceptional locus is   has a codimension at least two. When the dimension is three, this translates into  \(f \) does not contract a divisor.

\begin{definition}
A flop is a commutative diagram \[\begin{tikzcd}[sep=normal,cramped]
	X^{-} \arrow[dr,  "f^{-}"'] \arrow[rr, dashed,"g"]{}
	& & X^{+} \arrow[dl, "f^{+}"] \\
	& Y \end{tikzcd}\]
where \(f^{\pm}\) are small projective  birational morphisms, and the canonical bundles \(\omega_{X^{\pm}}\) are trivial over \(Y\).
\end{definition}
We will refer to   \(f^{-}\) and \(f^{+}\) as flopping contractions.  For threefolds,  a flop is  a process  of cutting out rational curves \(C_{i}\) and replacing them with a union of other rational curves,
without contracting any divisors.
\begin{setup}\cite[Setup 2.3]{DW3}\label{01 of 23 april 2023}
 Suppose that \(f \colon U \to  \Spec R \) is a flopping contraction  which is an isomorphism away from \emph{precisely one point} \(\mathfrak{m} \in \Max R\). We assume that \(U\) has only Gorenstein terminal singularities. As notation, above \(\mathfrak{m}\) is a connected chain \(C\) of \(n\) curves with reduced scheme structure \(C^{\redu}=\bigcup_{j=1}^{n}C_{j}\) such that each \(C_{j}=\mathbb{P}^{1}\)
\end{setup}

Under Setup \ref{01 of 23 april 2023}, by  \cite[Theorem A]{VdB} there is a tilting bundle \(\mathcal{V}=\mathcal{O}_{U} \oplus \mathcal{N}\) on \(U\) inducing a derived equivalence \[\Db(\coh U) \xrightarrow{\RHom(\mathcal{V},-)} \Db(\smallmod \Lambda ),\] where  \(\Lambda\colonequals \End_{U}(\mathcal{V}) \cong \End_{R}(f_{*}\mathcal{V})=\End_{R}(R\oplus f_{*}\mathcal{N}), \)  by  \cite[4.2.1]{VdB}.

\subsection{ General hyperplane arrangements}\label{arra section }
A real hyperplane arrangement,  written $\mathcal{H},$ is a finite set of hyperplanes in \(\mathbb{R}^{n}\).  Such an arrangement is called Coxeter if it arises as the set of reflection hyperplanes of a finite real reflection group. $\mathcal{H}$  is simplicial if \(\cap_{H\in \mathcal{H}} H = {0}\) and all chambers in \(\mathbb{R}^{n} \backslash \mathcal{H} \) are open simplicial cones.  All Coxeter arrangements are simplicial, but the converse is  false. 
\begin{definition}
	\cite[Definition 2.6] {AW} 
Let $\Gamma_{\mathcal{H}}$	 be the  oriented graph  associated to the hyperplane arrangement $\mathcal{H}$ defined as follows. The vertices of $\Gamma_{\mathcal{H}}$ are the chambers of \(\mathcal{H},\) i.e. the connected components of $\mathbb{R}^{n}\setminus \mathcal{H}$. There is a unique arrow $a \colon v_{1}\rightarrow v_{2}$ from chamber $v_{1}$ to chamber $v_{2}$ if the chambers
	are adjacent, otherwise there is no arrow.
\end{definition}

By definition, if there is an arrow $a \colon v_{1}\rightarrow v_{2}$, then there is a unique arrow $b \colon v_{2} \rightarrow v_{1}$ with the opposite direction of $a$. For an arrow $a \colon v_{1}\rightarrow v_{2}$, call \(s(a) \colonequals v_{1}\) the source of \(a,\) and  \( t(a) \colonequals v_{2}\) the target of \(a\).
A positive path of length $n$ in $\Gamma_{\mathcal{H}}$ is a formal symbol
$$p = a_{n} \circ \ldots \circ a_{2} \circ a_{1}$$	whenever there exists a sequence of vertices $v_{0},\ldots, v_{n}$ of $\Gamma_{\mathcal{H}}$ and arrows $a_{i} \colon v_{i-1} \rightarrow v_{i}$ in
$\Gamma_{\mathcal{H}}$. Set $s(p) \colonequals v_{0}, t(p)\colonequals v_{n}$, and call \(l(p)\colonequals n\) the length of \(p.\) If $q = b_{m} \circ \ldots \circ b_{2}\circ b_{1}$
is another positive path with $t(p) = s(q)$, we consider the formal symbol
$$q \circ p\colonequals b_{m}\circ  \ldots \circ b_{2} \circ b_{1} \circ a_{n}\circ \ldots \circ a_{2} \circ a_{1}$$
and call it the composition of $p$ and $q$.
 \begin{definition}\label{atom definition}
\cite[Definition 2.6]{HW} A positive path is called\textit{ minimal} if there is no positive path in $\Gamma_{\mathcal{H}}$ of smaller length, and with the same endpoints. The positive minimal paths are called \textit{atoms}. A positive path is called \textit{reduced} if it does not cross any hyperplane twice.
\end{definition}

\subsection{Specific hyperplane arrangements}\label{specific hyperplanes}
Returning to the setting of  \ref{01 of 23 april 2023},  set \(\scrR= \hat{R}\)  to be the completion of \(R\) at the unique singular point \(\mathfrak{m}.\) The natural morphism \(R\to \hat{R}=\scrR,\) induces the following diagram
\begin{eqnarray}\label{formal fibre diagram}
	\begin{tikzpicture}[yscale=1.25]
		\node (A) at (-1,0) {$\scrU$}; 
		\node (B) at (1,0) {$U$};
		\node (C) at (-1,-1) {$\Spec \scrR$}; 
		\node (D) at (1,-1) {$\Spec R$};
		\draw[->,dashed] (A) to node[above] {$\scriptstyle $} (B);
		\draw[->] (C) to node[above] {$\scriptstyle $} (D);
		\draw[->,dashed] (A) --  node[left] {$\scriptstyle \upvarphi$}  (C);
		\draw[->] (B) --  node[mid right] {$ \scriptstyle \ref{01 of 23 april 2023}$} node[mid left] {$\scriptstyle f$} (D);
	\end{tikzpicture}
\end{eqnarray}
where \(\upvarphi\) is  called the formal fibre. The morphism \(\scrU \to \Spec \scrR \)  is a formal flopping contraction, and   for a generic element  \( g \in \scrR, \) slicing   induces the following diagram
\[
\begin{tikzpicture}[yscale=1.25]
	\node (A1) at (-2,0.5) {$\scrX$}; 
	\node (A) at (-1,0) {$\scrY$}; 
	\node (B) at (1,0) {$\scrU$};
	\node (C) at (-1,-1) {$\Spec (\scrR/g)$}; 
	\node (D) at (1,-1) {$\Spec \scrR$};
	\draw[->, dashed] (A) to node[above] {$\scriptstyle $} (B);
	\draw[->] (C) to node[above] {$\scriptstyle $} (D);
	\draw[->] (A1) to node[above] {$\scriptstyle $} (A);
		\draw[->] (A1) to node[above] {$\scriptstyle $} (C);
	\draw[->,dashed] (A) --  node[right] {$\scriptstyle \uppsi$}  (C);
	\draw[->] (B) --  node[right] {$\scriptstyle \upvarphi$}  (D);
\end{tikzpicture}
\]
By Reid's general elephant \cite{R1}, \(\scrR/g\) is an $\mathrm{ADE}$ Kleinian singularity, \(\uppsi\) is a partial resolution morphism, and  \(\scrX\) is the minimal resolution. By the McKay correspondence, the exceptional curves \(C_{i}\)  of \(\scrX\) are indexed by vertices  \(i\) in a  Dynkin diagram \(\Updelta.\) We will write \(\scrJ \in \Updelta\) for the curves that are contracted  to \(\scrY,\) and so \(\scrJ^{c}=\Updelta\setminus \scrJ\) are those that survive. 

The Dynkin    data  \((\Updelta, \scrJ)\) gives rise to two hyperplane arrangements. One \(\scrH_{\scrJ}\)  is finite and the  other \(\scrH_{\scrJ}^{\aff}\)  is infinite. Both live inside \(\mathbb{R}^{|\Updelta|- |\scrJ|}.\) The hyperplane  arrangement \(\scrH_{\scrJ}\) is calculated by restricting all the positive roots of \(\Updelta\) to the subset \(\scrJ^{c}.\) To calculate    \(\scrH_{\scrJ}^{\aff},\) given  a restricted root \(a=(a_{i})_{i\in \scrJ^{c}},\) the hyperplane \(\sum_{i\in \scrJ^{c}}a_{i}x_{i}=0\) appearing in \(\scrH_{\scrJ}\) gets translated over the set of integers to give an infinite family \(\sum_{i\in \scrJ^{c}}a_{i}x_{i} \in \mathbb{Z}.\) This  is repeated on every restricted root  to give \(\scrH_{\scrJ}^{\aff},\) for full details see e.g \cite{NW,IW7}. Note that when \(\scrJ=\emptyset,\)   that is \(\scrX =\scrY,\) then \(\scrH\) is the finite  $\mathrm{ADE} $ root system and \(\scrH^{\aff}\) is the extended affine root system.
\begin{example}\label{example of arrangements}
For  the example of Dynkin data \((\Updelta, \scrJ )\)  with  \(\scrJ= \begin{tikzpicture}
	\draw(0.3,0) circle (2pt);
	\draw (0.5,0) circle (2pt);
	\filldraw (0.7,0) circle (2pt);
	\draw (0.9,0) circle (2pt);
	\draw (1.1,0) circle (2pt);
	\filldraw (0.7,0.2) circle (2pt);
\end{tikzpicture}, \) where by convention \(\scrJ\)  are the unshaded nodes,  in the diagram below the full figure is the affine hyperplane arrangement  \(\scrH^{\aff}_{\scrJ}.\) Zooming in  to only  the pink lines is  the finite hyperplane arrangement \(\scrH_{\scrJ}.\)
\[
\begin{tikzpicture}
\clip (-4.75,-3.25) rectangle (3,3.25);
\node at (0,0)
{
$\begin{tikzpicture}[xscale=\EeightThreeScalex,yscale=\EeightThreeScaley,
	extended line/.style={shorten >=-#1,shorten <=-#1}]
	\foreach \y in {-2,-1,0,1,2}
	\foreach \x in {-2,0,2}
	{
		\node at (4*\x,4*\y) {$\EeightThreeNoLattice$}; 
	}
	\foreach \y in {-3,-2,-1,0,1}
	\foreach \x in {-3,-1,1,3}
	{
		\node at (4*\x,2+4*\y) {$\EeightThreeNoLattice$};
	}
	\draw[line width=\mythick mm, Pink,extended line=5cm] (0,0) -- (-4,-2);
	\draw[line width=\mythick mm, Pink,extended line=5cm] (0,-1) -- (-4,-1);
	\draw[line width=\mythick mm, Pink,extended line=5cm] (0,-2) -- (-4,0);
	\draw[line width=\mythick mm, Pink,extended line=5cm] (-1,-2.5) -- (-3,0.5);
	\draw[line width=\mythick mm, Pink,extended line=5cm] (-3,-2.5) -- (-1,0.5);
	\draw[line width=\mythick mm, Pink,extended line=5cm] (-2,-3) -- (-2,1);
\end{tikzpicture}$	};
\end{tikzpicture}
\]
\end{example}

When $\mathcal{H}$ is simplicial, we  write \[ \mathcal{H}_{\mathbb{C}} \colonequals  \bigcup\limits_{H\in \mathcal{H}}  H_{\mathbb{C}}, \] where \(H_{\mathbb{C}}\) is the complexification of a hyperplane \(H.\) We  then denote  the  fundamental group of the complexified complement  by \( \pi_{1}(\mathbb{C}^{n} \backslash \scrH_\mathbb{C})\)  or  \( \pi_{1}(\mathbb{C}^{n} \backslash \scrH^{\aff}_\mathbb{C}),\) for  \(\scrH\) or \(\scrH^{\aff}\) respectively.
\subsection{ Mutation} As motivation  for finding derived equivalences of a ring, the idea of mutation is to find new tilting modules from old ones, by removing one indecomposable summand of a module and replacing it with another. 

Let \(\scrR\)  be a complete local cDV singularity. A  module \(N\in \smallmod \scrR \) is said to be maximal Cohen--Macaulay($=\mathrm{CM}$) if 
\[\mathrm{depth}_{\scrR}N  \colonequals \mathrm{inf} \{i\geq 0 \mid \Ext^{i}_{\scrR}(\scrR/\mathfrak{m},N)\neq 0\}= \mathrm{dim}\scrR,\]  and we write \(\mathrm{CM}\scrR\) for the category of \(\mathrm{CM} \scrR\)-modules. Further,  \(N\in \smallmod \scrR \) is said to be reflexive if the natural morphism \(N\to N^{**}\) is an isomorphism, where \((-)^{*} \colonequals \Hom_{\scrR}(-,\scrR),\) and we write \(\refl \scrR\)   for the category of reflexive \( \scrR\)-modules.

\begin{definition}
A module \(N\in \refl \scrR\)  is called a modifying module if \(\End_{\scrR}(N) \in \mathrm{CM}\scrR,\) and    \(N\in \refl \scrR\)  is called a maximal modifying module($=\mathrm{MM}$) if it is modifying  and 
\[\mathrm{add}N= \{A\in \refl \scrR \mid \End_{\scrR}(N\oplus A) \in \mathrm{CM}\scrR\}.\]
If \(N\) is \(\mathrm{MM},\) then \(\End_{\scrR}(N)\) is called a maximal modification algebra(\(=\mathrm{MMA}\)).
\end{definition}
We next summarize mutation, following \cite{IW2}. For  \(C,D \in \smallmod  \scrR,\)   a morphism \(g \colon D_{0} \to C\) is  called  a right (\(\mathrm{add}D\))-approximation if \( D_{0} \in \mathrm{add} D \) and 
\[\Hom_{\scrR}(D,D_{0}) \xrightarrow{.g} \Hom_{\scrR}(D,C)\]
is surjective. The  left (\(\mathrm{add}D\))-approximation can be defined dually.  Given  a modifying \(\scrR\)-module \(N,\)  with an indecomposable summand \(N_{i}, \)  consider 
\begin{enumerate} 
\item  a right $\left(\mathrm{add}\tfrac{N}{N_{i}}\right)$-approximation of \(N_{i},\) namely  \(V_{i} \xrightarrow{a_{i}} N_{i}\) 
\item  a right $\left(\mathrm{add}\tfrac{N}{N_{i}}\right)^{*}$-approximation of \(N^{*}_{i}, \) namely  \(U^{*}_{i} \xrightarrow{b_{i}} N^{*}_{i}\) 
\end{enumerate}
which give exchange sequences 
\begin{equation}\label{intro sequence}
	0 \to \ker a_{i} \to V_{i} \xrightarrow{a_{i}} N_{i} \quad \mbox{and} \quad  0 \to \ker b_{i} \to U^{*}_{i} \xrightarrow{b_{i}} N^{*}_{i}.
\end{equation}\vspace{0.5cm}
From this 
\begin{enumerate}
\item   the right mutation of \(N\) at \(N_{i}\) is defined by 
\[\upnu_{i}(N) \colonequals \frac{N}{N_{i}} \oplus  \ker a_{i},\] i.e remove the indecomposable  summand \(N_{i}\) and replace it with \(\ker a_{i}.\)
 \item The left  mutation of \(N\) at \(N_{i}\) is defined by 
\[\upmu_{i}(N) \colonequals \frac{N}{N_{i}} \oplus  (\ker b_{i})^{*}.\]
\end{enumerate}

As in \cite[Appendix A]{W1},  write \[ \upphi_{i} \colon \Db(\End_{\scrR}(N)) \to  \Db(\End_{\scrR}(\upnu_{i}N))\] for the associated derived equivalence \(\RHom_{\End_{\scrR}(N)}(\Hom_{\scrR}(N,\upnu_{i}N),-).\)
\subsection{Affine Auslander-McKay  bijection}\label{affine auslander }
We will consider the  general situation of Setup \ref{01 of 23 april 2023} where we  input a flopping contraction \(f \colon U\to \Spec R\) with terminal Gorestein singularities, with formal fibre \( \scrU \to \Spec \scrR.\) In this case set \(N\colonequals f_{*}\left(\mathcal{O}_{\scrU} \oplus \mathcal{N}\right), \)  where \(\mathcal{O}_{\scrU} \oplus \mathcal{N}\) is the \cite{VdB} tilting bundle on \(\scrU.\) Complete locally we have the following diagram,
\[
{\scriptstyle
	\begin{tikzpicture}
\node at (-1.9,0) {$\scrU$};
\node at (0.5,0) {$\mathrm{(n~curves)}$};
\node at (-1.9,-2) {$\Spec\scrR$};
\node at (0,0) {\begin{tikzpicture}[scale=0.5]
\coordinate (T0) at (1.8,3);
\coordinate (B0) at (1.8,2);
\coordinate (T) at (1.8,2.3);
\coordinate (B) at (1.8,1.3);
\coordinate (T1) at (1.8,1.8);
\coordinate (B1) at (1.8,0.8);
\draw[red,line width=1pt] (B) to [bend left=25] (T);
\draw[red,line width=1pt] (B1) to [bend left=25] (T1);
\draw[red,line width=1pt] (B0) to [bend left=25] (T0);
\draw[color=blue!60!black,rounded corners=5pt,line width=1pt] (0,0.2) -- (2,0.5)-- (5.1,0.2) -- (5.8,2.1)-- (5.5,3.8) -- (3,3.3) -- (0.7,3.6) -- (-0.7,2.6)-- cycle;				
\end{tikzpicture}};
\node at (0,-2) {\begin{tikzpicture}[scale=0.5]
\filldraw [red] (2.1,0.75) circle (1pt);
\draw[color=blue!60!black,rounded corners=5pt,line width=1pt] (1,0) -- (2,0.15)-- (4.1,0) -- (4.8,0.75)-- (4.5,1.6) -- (3,1.35) -- (0.7,1.5) -- (0.3,1)-- cycle;
\end{tikzpicture}};
\draw[->, color=blue!60!black] (-0.3,-1) -- (-0.3,-1.5);
	\end{tikzpicture}}
\]

Using same notation as in \cite{IW7}, we will write  \(\mathrm{MM}^{N}\scrR\) for those modifying reflexive \(\scrR\)-modules that have a two-term approximation by \(\mathrm{add} N\) and have the same number of indecomposable summands as \(N\). Further, write   \(\mathrm{MMG}^{N}\scrR\) for those in  \(\mathrm{MM}^{N}\scrR\) which have \(\scrR\) as a summand. By the general version of affine  Auslander-McKay correspondence see e.g \cite[0.18]{IW7},  \(\mathrm{MM}^{N}\scrR\) coincides  with the mutation classes of \(N\) and \(\mathrm{MMG}^{N}\scrR\) coincides with the Cohen-Macaulay  mutation classes of \(N,\)
 and there are the following bijections \[
\begin{tikzpicture}
\node (A) at (1.5,0) {$\{ \mbox{MM}^{N}\scrR\}$};
\node (B) at (5.7,0) {$\{\mbox{Chambers of $\scrH^{\aff}$} \}$};
\node (C) at (1.5,-1) {$\{ \mbox{MMG}^{N}\scrR\}$};
\node (D) at (5.5,-1) {$\{\mbox{Chambers of $\scrH$} \}$};
\draw[<->] (2.5,0) -- node [above] {$\scriptstyle  1\colon 1$} (4,0);
\draw[<->] (2.5,-1) -- node [above] {$\scriptstyle  1\colon 1$} (4,-1);
\draw (1.2,-1) [transparent]edge node[rotate=90,opacity=1] {$\subseteq$} (1.7,0);
\draw (8, -1) [transparent]edge node[rotate=90,opacity=1] {$\subseteq$} (2,0);
\end{tikzpicture}
\] under which  wall crossing corresponds to mutation. As notation, write \(N_{D}\) for the element in \(\mathrm{MM}^{N}\scrR\) corresponding to chamber \(D.\)
 
\begin{example} 
Continuing the Example \ref{example of arrangements}, the following green chamber (or alcove) corresponds  to $N$ and  has 3 summands (corresponding to the three walls).  We can mutate  to obtain all elements of  \(\mathrm{MM}^{N}\scrR\)  by beginning in the green chamber.
\[ {\scriptstyle
\begin{tikzpicture}
		\clip (-4.75,-3.25) rectangle (3,3.25);
		\node at (0,0)
		{
			$\begin{tikzpicture}[xscale=\EeightThreeScalex,yscale=\EeightThreeScaley]
				\filldraw[Green] (0,0) --($(0,0)!0.5!(0,-2)$)--(-2,-1)--cycle;
				\foreach \y in {-2,-1,0,1,2}
				\foreach \x in {-2,0,2}
				{
					\node at (4*\x,4*\y) {$\EeightThreeNoLattice$}; 
				}
				\foreach \y in {-3,-2,-1,0,1}
				\foreach \x in {-3,-1,1,3}
				{
					\node at (4*\x,2+4*\y) {$\EeightThreeNoLattice$};
				}
			\end{tikzpicture}$
		};
	\end{tikzpicture} }
\]

Bounding the above figure  to following the pink  box  illustrates the  chambers in the finite arrangement \(\scrH.\) We  can obtain all elements of  \(\mathrm{MMG}^{N}\scrR\) (which correspond to the chambers within the pink box)   by  repeatedly mutating all summands except \(\scrR.\) 
\[
\begin{array}{c}
\begin{tikzpicture}
\clip (-4.75,-3.25) rectangle (3,3.25);
\node at (0,0)
	{
$\begin{tikzpicture}[xscale=\EeightThreeScalex,yscale=\EeightThreeScaley]
\foreach \y in {-2,-1,0,1,2}
\foreach \x in {-2,0,2}
{
	\node at (4*\x,4*\y) {$\EeightThreeNoLattice$}; 
}
\foreach \y in {-3,-2,-1,0,1}
\foreach \x in {-3,-1,1,3}
{
	\node at (4*\x,2+4*\y) {$\EeightThreeNoLattice$};
}
\draw[line width=\mythick mm, Pink] (0,0) --(0,-2)--(-2,-3)--(-4,-2)--(-4,0)--(-2,1)--cycle;
\end{tikzpicture}$
};			
\end{tikzpicture}
	\end{array}
\]
\end{example}

\section{Preliminary Lemmas}\label{preliminary}
Returning to Setup \ref{01 of 23 april 2023}, consider the flopping contraction \(U\to \Spec R \) with formal fibre \(\scrU \to \Spec \scrR.\) As explained in \S \ref{geometric setup},  \( \Lambda=\End_{R}(R\oplus  f_{*}\mathcal{N}), \)  thus after completion \[\scrR \oplus  \widehat{f_{*}\mathcal{N}}=\scrR^{\oplus a_{0}} \oplus  N_{1}^{\oplus a_{1}} \oplus \hdots \oplus N_{n}^{\oplus a_{n}}, \] for some \(a_{i} \in \mathbb{N},\) and so 
\[\hat{\Lambda} = \End_{\scrR} (\scrR^{\oplus a_{0}} \oplus  N^{\oplus a_{1}} \oplus \hdots \oplus N^{\oplus a_{n}}).\] Set  $\Anohat=\End_\scrR(N)$ with $N=\scrR\oplus N_1\oplus\hdots\oplus N_n,$ and write that $\scrR\colonequals R\otimes_{R_\m}R_\m\otimes\hat{R}_\m$.

\begin{setup}\label{bi setup}
 Given a choice of an atom  \(\upalpha\colon C_{+} \to D\) in \(\scrH \) or \(\scrH^{\aff}\)  and any wall \(i\) of \(D\) we next associate a functor to \(\Db(\Lambda).\) 
 Note that \(\Anohat\) is the basic algebra Morita equivalent to \(\hat{\Lambda}\), by e.g.\ \cite [2.6]{DW3}. Consider $\Bnohat\colonequals \End_\scrR(N_D),$ where \(N_{D}\) is described in \(\S\ref{affine auslander }.\) Now, $\Bnohat$  inherits  \(n+1\) primitive idempotents  \(e_{i}\)  corresponding to   the  walls  of \(D.\)  For purposes of notation,  write \(\Upphi_{\upalpha}\) for composition of  mutation functors  corresponding to  the path \(\upalpha,\)  and set $ \Bnohat_i\colonequals \Bnohat/({1-e_i}).$  Consider the composition		
\begin{equation}\label{eqn4}
\uppsi_{\upalpha,i} \colonequals \D(\Bnohat_i) \xrightarrow{}\D(\Bnohat) \xrightarrow{\Upphi_{\upalpha}} \D(\Anohat)\xrightarrow{\sim}\D(\hat{\Lambda}) \xrightarrow{}\D(\Lambda) 
\end{equation}		
where the first is induced by the ring homomorphism $\Bnohat\to\Bnohat_i$, the second is the composition mutations, the third is the Morita equivalence as in \cite[5.4]{DW1} and above, and the fourth is induced by the ring homomorphism $\Lambda\to\hat{\Lambda}$.
\end{setup} 
Thus, given information of \((\upalpha,i),\) which is complete local since \(\scrH\) and \(\scrH^{\aff}\) are constructed complete locally, we have obtained a functor \(\uppsi_{\upalpha,i} \) between the derived categories of algebraic objects.

Given that \(\upphi_{\upalpha} \) is the composition of standard equivalences, we may write \(\upphi_{\upalpha} \cong \RHom_{\Bnohat}({}_{\Anohat}\uptau_{\upalpha},-)\) for some \(\Aphat\)-\(\Bnohat\) bimodule \(\uptau_{\upalpha}.\) Similarly, write \(\Hom_{\Aphat}({}_{\hat{\Lambda}}\mathbb{M},-)\) for the Morita equivalence between \(\Anohat\) and \(\hat{\Lambda}.\) We have the following result.
\begin{lemma}\label{01 of 4th november 2022}
There is a functorial isomorphism
\begin{equation}\label{eqn5}\uppsi_{\upalpha,i} \cong 
\begin{cases}
\! 
\begin{alignedat}{1}
&\RHom_{\Lambda}\left( \hat{\Lambda}\otimes^{\bf{L}}_{\hat{\Lambda}} \mathbb{M}\otimes^{\bf{L}}_{\Aphat} \uptau_{\upalpha}\otimes _{\Bphat}^{\bf{L}}\Bnohat_{i},-\right) \\
&-\otimes^{\bf{L}}_{\Bnohat_{i}}\left( \Bnohat_{i}\otimes _{\Bphat}^{\bf{L}} \,\uptau^{*}_{\upalpha} \otimes^{\bf{L}}_{\Aphat} \mathbb{M}^{*}\otimes^{\bf{L}}_{\hat{\Lambda}}\hat{\Lambda} \right) 							
\end{alignedat}

\end{cases}
\end{equation}
\end{lemma}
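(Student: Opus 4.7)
The plan is to unfold the composition \eqref{eqn4} defining $\uppsi_{\upalpha,i}$ into its four constituent functors and rewrite each of them in bimodule language. Each of the two restriction functors, along $\Bnohat\twoheadrightarrow\Bnohat_{i}$ and $\Lambda\to\hat{\Lambda}$, is simultaneously a derived tensor product with, and an $\RHom$ out of, the ring $\Bnohat_{i}$ (respectively $\hat{\Lambda}$) equipped with the bimodule structure supplied by the ring homomorphism. The Morita equivalence $\D(\Anohat)\xrightarrow{\sim}\D(\hat{\Lambda})$ is by construction both $-\otimes^{\mathbf{L}}\mathbb{M}$ and $\RHom(\mathbb{M}^{*},-)$. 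Finally, $\Upphi_{\upalpha}\cong\RHom_{\Bnohat}({}_{\Anohat}\uptau_{\upalpha},-)$ equals $-\otimes^{\mathbf{L}}_{\Bnohat}\uptau_{\upalpha}^{*}$ with $\uptau_{\upalpha}^{*}\colonequals\RHom_{\Bnohat}(\uptau_{\upalpha},\Bnohat)$, since $\uptau_{\upalpha}$ is a two-sided tilting complex between $\Anohat$ and $\Bnohat$ obtained as a composition of mutation equivalences.

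With these four bimodule presentations in hand, the two desired formulae arise by composing in two complementary ways. To produce the second case, I would compose the tensor presentations directly: by associativity of the derived tensor, the four functors collapse into a single $-\otimes^{\mathbf{L}}_{\Bnohat_{i}}Y$, where $Y=\Bnohat_{i}\otimes^{\mathbf{L}}_{\Bnohat}\uptau_{\upalpha}^{*}\otimes^{\mathbf{L}}_{\Anohat}\mathbb{M}^{*}\otimes^{\mathbf{L}}_{\hat{\Lambda}}\hat{\Lambda}$. To produce the first case, I would compose the $\RHom$ presentations using the tensor-Hom adjunction $\RHom_{A}(V,\RHom_{B}(U,-))\cong\RHom_{B}(U\otimes^{\mathbf{L}}_{A}V,-)$, applied iteratively; the four $\RHom$'s telescope into a single $\RHom_{\Lambda}(X,-)$ with $X=\hat{\Lambda}\otimes^{\mathbf{L}}_{\hat{\Lambda}}\mathbb{M}\otimes^{\mathbf{L}}_{\Anohat}\uptau_{\upalpha}\otimes^{\mathbf{L}}_{\Bnohat}\Bnohat_{i}$.

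The main obstacle is bookkeeping. The bimodules $\mathbb{M}$, $\uptau_{\upalpha}$, $\hat{\Lambda}$ and $\Bnohat_{i}$ each live in a different two-sided category, and at every tensor product and every adjunction step one must verify that the left and right actions line up correctly so that the resulting bimodule structures propagate all the way to $(\Lambda,\Bnohat_{i})$ on one side and $(\Bnohat_{i},\Lambda)$ on the other. Once the conventions are fixed, the identification $\RHom(T,-)\cong-\otimes^{\mathbf{L}}T^{*}$ used for the mutation and Morita factors is the standard isomorphism attached to a two-sided tilting or progenerator complex, and both formulae then follow by a routine tensor-Hom manipulation, as required.
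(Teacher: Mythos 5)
Your proposal is correct and follows the paper's argument essentially verbatim: unfold $\uppsi_{\upalpha,i}$ into its four constituent functors, observe that the two restriction-of-scalars factors have their $\RHom$ and $\otimes^{\mathbf{L}}$ presentations literally equal while the two middle equivalences (mutation and Morita) have them isomorphic (the paper cites Rickard for this), and then compose. One small slip worth correcting: you describe the Morita equivalence $\mathbb{F}$ as ``both $-\otimes^{\mathbf{L}}\mathbb{M}$ and $\RHom(\mathbb{M}^{*},-)$'', which is the reverse of the convention used in the paper and is inconsistent with your own final formulae (where $\mathbb{M}$ appears inside the $\RHom$ and $\mathbb{M}^{*}$ inside the tensor); the correct statement is $\mathbb{F}=\RHom_{\Anohat}({}_{\hat{\Lambda}}\mathbb{M},-)\cong-\otimes^{\mathbf{L}}_{\Anohat}\mathbb{M}^{*}_{\hat{\Lambda}}$.
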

\begin{proof}
Consider the functor diagram
\[
\D(\Bnohat_{i}) \xrightarrow[-\otimes^{\bf{L}}_{\Bphat_{i}}{\Bnohat_{i}}_{\Bnohat}]{\RHom_{\Bnohat_{i}}({}_{\Bnohat}\!\Bnohat_{i},-)}
\D(\Bnohat) 
\xrightarrow[-\otimes^{\bf{L}}_{\Bphat} {\,\uptau^{*}_{\upalpha}}_{\Aphat}]{\RHom_{\Bnohat}({}_{\Anohat}\uptau_{\upalpha},-)}
\D(\Anohat) \xrightarrow[-\otimes^{\phantom{L}}_{\!\Aphat} {\mathbb{M}_{\hat{\Lambda}}^{*}}]{\Hom_{\Aphat}({}_{\hat{\Lambda}}\mathbb{M},-)} 
\D(\hat{\Lambda})\xrightarrow[-\otimes^{\bf{L}}_{\hat{\Lambda}} \,\hat{\Lambda}^{\phantom{L}}_{\Lambda}]{\RHom_{\hat{\Lambda}}({}^{\phantom{L}}_{\Lambda}\!\hat{\Lambda},-)}\D(\Lambda).
\] 
The left most and right most functors are restriction of scalars, and  so in both cases the top functor equals the bottom one. The middle functors are equivalences, so again  in each case, the top functor is isomorphic to the bottom functor,  now by e.g \cite[p.44]{R3}.  The statement follows since \eqref{eqn5} is the composition.				
\end{proof}
			
\begin {lemma}\label{02 of 05th november 2022}
\(\uprho \colonequals \left( \Bnohat_{i}\otimes_{\Bphat}^{\bf{L}} \,\uptau^{*}_{\upalpha} \otimes^{\bf{L}}_{\Aphat} \mathbb{M}^{*}\otimes^{\bf{L}}_{\hat{\Lambda}}\hat{\Lambda} \right) \cong  \mathbb{F}\Upphi_{\upalpha}\left(\Bnohat_{i}\right) \) as  right \(\Lambda\)-modules.
\end{lemma}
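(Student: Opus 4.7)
The plan is to evaluate the functor \(\uppsi_{\upalpha,i}\) of Setup \ref{bi setup} at the regular module \(\Bnohat_i\in \D(\Bnohat_i)\) using both of the descriptions of \(\uppsi_{\upalpha,i}\) given in Lemma \ref{01 of 4th november 2022}, and then compare. The lemma is essentially a specialisation of the previous one at a specific object.

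First I would apply the tensor (left-adjoint) formula in \eqref{eqn5} to \(\Bnohat_i\), which yields
\[
\uppsi_{\upalpha,i}(\Bnohat_i)\;\cong\;\Bnohat_i\otimes^{\bf{L}}_{\Bphat_i}\Bnohat_i\otimes^{\bf{L}}_{\Bphat}\uptau^{*}_{\upalpha}\otimes^{\bf{L}}_{\Aphat}\mathbb{M}^{*}\otimes^{\bf{L}}_{\hat{\Lambda}}\hat{\Lambda}.
\]
The outermost factor \(\Bnohat_i\otimes^{\bf{L}}_{\Bphat_i}\Bnohat_i\cong \Bnohat_i\) collapses, since the regular bimodule is the identity for derived tensor, so the right hand side is precisely \(\uprho\).

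Next I would evaluate the same functor through the original composition \eqref{eqn4}. Restriction of scalars along the surjection \(\Bphat\to \Bphat_i\) sends \(\Bnohat_i\in \D(\Bnohat_i)\) to \(\Bnohat_i\in \D(\Bnohat)\); the composition of mutations \(\Upphi_{\upalpha}\) then maps it to \(\Upphi_{\upalpha}(\Bnohat_i)\in \D(\Anohat)\); finally the Morita equivalence together with the restriction along \(\Lambda\to\hat{\Lambda}\) (together constituting the passage into \(\D(\Lambda)\) via \(\mathbb{F}\)) sends it to \(\mathbb{F}\Upphi_{\upalpha}(\Bnohat_i)\) as a right \(\Lambda\)-module. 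Since Lemma \ref{01 of 4th november 2022} asserts the functorial isomorphism of these two descriptions of \(\uppsi_{\upalpha,i}\), evaluating at \(\Bnohat_i\) gives \(\uprho\cong \mathbb{F}\Upphi_{\upalpha}(\Bnohat_i)\) as right \(\Lambda\)-modules, which is the claim.

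The only real point of care is the collapse \(\Bnohat_i\otimes^{\bf{L}}_{\Bphat_i}\Bnohat_i\cong \Bnohat_i\) in the first step, and ensuring that the restriction of scalars from \(\D(\Bnohat_i)\) to \(\D(\Bnohat)\) agrees with the derived tensor \(-\otimes^{\bf{L}}_{\Bphat_i}\Bnohat_i\), both standard facts for the ring surjection \(\Bphat\to\Bphat_i\). Everything else is a formal unwinding of the commutative diagram already established.
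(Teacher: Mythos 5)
Your proposal is correct and takes essentially the same approach as the paper: the paper also traces $\Bnohat_i$ through the composition \eqref{eqn4} to get $\mathbb{F}\Upphi_{\upalpha}(\Bnohat_i)$, identifies $\Uppsi_{\upalpha,i}(\Bnohat_i)$ with $\Bnohat_i\otimes^{\bf{L}}_{\Bphat_i}\uprho$ via Lemma \ref{01 of 4th november 2022}, and collapses that factor. The only cosmetic difference is that you absorb the $\Bnohat_i\otimes^{\bf{L}}_{\Bphat_i}\Bnohat_i\cong\Bnohat_i$ collapse before writing $\uprho$, whereas the paper applies it afterwards.
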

\begin{proof}
Observe that \(\Bnohat_{i}\mapsto \Bnohat_{i}\mapsto \Upphi_{\upalpha}(\Bnohat_{i})\mapsto \mathbb{F}\Upphi_{\upalpha}(\Bnohat_{i}) \mapsto \mathbb{F}\Upphi_{\upalpha}(\Bnohat_{i})\) under the composition defining \(\Uppsi_{\upalpha,i}\). Thus
\[
\mathbb{F}\Upphi_{\upalpha}(\Bnohat_{i})\cong \Uppsi_{\upalpha,i}(\Bnohat_{i}) \stackrel{\ref{01 of 4th november 2022}} {\cong } \Bnohat_{i}\otimes^{\bf{L}}_{\Bphat_{i}}\uprho \cong \uprho\] as right \(\Lambda\)-modules.
\end{proof}
\begin{claim}\label{01 0f 20th november 2022}
\(\mathbb{F}\Upphi_{\upalpha}(S_{i})\) is  a finite length module  supported only  at the maximal ideal \(\mathfrak{m}\).
\end{claim}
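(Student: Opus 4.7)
The plan is to track two properties of $S_i$---being of finite length and being supported only at $\mathfrak{m}$---through each of the functors comprising \eqref{eqn4}. The input $S_i$ is the simple $\Bnohat_i$-module; pulled back along $\Bnohat\to\Bnohat_i$ it becomes a simple, hence finite length, $\Bnohat$-module. Since $\Bnohat=\End_\scrR(N_D)$ is a module-finite $\scrR$-algebra and $\scrR$ is complete local at $\mathfrak{m}$, this module is automatically supported only at $\mathfrak{m}$ as an $\scrR$-module.

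For the main step, I would show that $\Upphi_\upalpha$ preserves finite length cohomology and support. By definition $\Upphi_\upalpha$ is a composition of the mutation equivalences $\upphi_j$ from \S\ref{arra section}, each of which is a standard equivalence arising from tensoring against a tilting bimodule that is finitely generated as an $\scrR$-module on both sides. Consequently each $\upphi_j$ restricts to an equivalence on the subcategory $\Dbfl$ of complexes with finite length cohomology, so $\Upphi_\upalpha(S_i)$ has finite length cohomology, and this cohomology is supported only at $\mathfrak{m}$. The Morita equivalence $\mathbb{F}$ preserves both properties trivially, and restriction of scalars along $\Lambda\to\hat{\Lambda}$ sends a finite length $\hat{\Lambda}$-module to a finite length $\Lambda$-module: any finite length $\hat{\Lambda}$-module is annihilated by a power of $\hat{\mathfrak{m}}$, hence by a power of $\mathfrak{m}\subset\Lambda$, so its $\Lambda$-support remains concentrated at $\mathfrak{m}$.

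The main obstacle is the concentration in a single cohomological degree, so that $\mathbb{F}\Upphi_\upalpha(S_i)$ is genuinely a module rather than merely a complex with finite length cohomology. This I expect to verify using the explicit description of each $\upphi_j$ via the exchange sequences \eqref{intro sequence}: a single mutation at vertex $j$ sends the simple at $j$ to the simple shifted by $[\pm 1]$, and sends every other simple to a module concentrated in degree zero. Since $\upalpha$ is an atom---crossing each wall of $\scrH$ (or $\scrH^{\aff}$) at most once---the degree shifts accumulated along $\upalpha$ are controlled, and the overall cohomology ends up concentrated in a single degree. This is the step that warrants the most explicit verification.
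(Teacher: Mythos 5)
Your proposal follows essentially the same outline as the paper's proof: establish that $\Upphi_{\upalpha}(S_i)$ sits in a single cohomological degree using the atom property, verify finite length and support at $\mathfrak{m}$, apply the Morita equivalence $\mathbb{F}$, and finally restrict scalars along $\Lambda\to\hat{\Lambda}$. Where the justifications differ they are still sound. For the support computation the paper localizes directly, observing that for $\mathfrak{p}\neq\mathfrak{m}$ one has $\Upphi_{\upalpha}(S_i)_{\mathfrak{p}}=\RHom_{\Bnohat_{\mathfrak{p}}}({\uptau_{\upalpha}}_{\mathfrak{p}},{S_i}_{\mathfrak{p}})=0$, whereas you argue that each mutation equivalence restricts to one on $\Dbfl$; both routes work. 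For the final restriction the paper appeals to the Iyama--Reiten embedding $\fl\hat{\Lambda}\hookrightarrow\fl\Lambda$, whereas you give a more elementary argument via finite $\mathbb{C}$-dimension and annihilation by a power of $\mathfrak{m}$; again fine.

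The one place you should be careful is precisely the step you flag as needing verification. The assertion that a single mutation $\upphi_j$ sends the simple at $j$ to a shifted simple and sends every other simple to a module concentrated in degree zero is not accurate in general: a mutation can send other simples to genuine two-term complexes, and concentration in one degree only emerges after composing along a \emph{minimal} path. The precise mechanism, which the paper invokes by citing \cite[\S 5]{HW}, is the torsion pair argument: an atom determines a torsion pair with respect to which each simple lies entirely in the torsion class or entirely in the torsion-free class, forcing $\Upphi_{\upalpha}(S_i)$ to land in a single cohomological degree. Your planned explicit verification via the exchange sequences would need to reproduce that torsion-pair content rather than a naive mutation-by-mutation bookkeeping of shifts, which would not close the argument as written.
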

		\begin{proof}
Since \(\upalpha\) is an atom, by torsion pairs (see e.g \cite[\(\S5\)]{HW}), \(\Upphi_{\upalpha}(S_{i})\) is a module or a shift of a module, in particular \(\Upphi_{\upalpha}(S_{i})\) is in a single homological degree. Thus, since \(\mathbb{F}\) is a Morita equivalence, \(\mathbb{F}\Upphi_{\upalpha}(S_{i})\) is also in a single homological degree. Since \(\Bnohat_{i}\) is a finite dimensional algebra filtered by the simple \(S_{i}\), then  \(\mathbb{F}\Upphi_{\upalpha}\left(\Bnohat_{i}\right) \) is filtered by  \(\mathbb{F}\Upphi_{\upalpha}(S_{i}).\)  It is known that \(S_{i}\) is supported  only at the maximal ideal \(\m\), thus for $\p\neq\m$
\[
\Upphi_{\upalpha}(S_{i})_{\p}=\mathbf{R}\text{Hom}_{\Bnohat}(\uptau_{\upalpha},S_{i})_{\p}
=\mathbf{R}\text{Hom}_{\Bnohat_{\p}}({\uptau_{\upalpha}}_\p,{S_{i}}_{\p})
=0.
\]		
So, \(\mathbb{F}\Upphi_{\upalpha}(S_{i})=\Upphi_{\upalpha}(S_{i})\otimes^{\phantom{L}}_{\!\Anohat}\! {\mathbb{M}^{*}}\) is also supported at only  the maximal ideal \(\mathfrak{m}.\) Thus, \(\mathbb{F}\Upphi_{\upalpha}(S_{i})_{\hat{\Lambda}}\in \fl\hat{\Lambda}\). Since by Iyama--Reiten \cite{IR} \(\fl\hat{\Lambda} \hookrightarrow \fl\Lambda \), then \(\mathbb{F}\Upphi_{\upalpha}(S_{i})\) is  a finite length \(\Lambda\)-module supported only at the maximal ideal \(\mathfrak{m}.\)
		\end{proof}
Recall from  $\S\ref{specific hyperplanes}$ the associated hyperplane arrangements \(\scrH\) and \(\scrH^{\aff}.\)
\begin{lemma}\label{02 of 24 january 2023}
If \(\upalpha \) is an atom in \(\scrH\) or \( \scrH^{\aff}\), then \(\mathrm{H}^{t}(\Upphi^{-1}_{\upalpha}(\Bnohat_{i}))=0\) for all but one \(t.\)
\end{lemma}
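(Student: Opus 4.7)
The plan is to adapt the one-degree argument already used in the proof of Claim \ref{01 0f 20th november 2022}. That argument rested on two ingredients: the atomic nature of $\upalpha$, which via the torsion-pair machinery of \cite[\S 5]{HW} forces $\Upphi_{\upalpha}(S_i)$ to sit in a single homological degree, together with the fact that the finite-dimensional local algebra $\Bnohat_i$ is filtered by copies of its unique simple top $S_i$. Both ingredients should transfer directly to the present setting.

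First, I would observe that reversing an atomic path yields another atom. By Definition \ref{atom definition}, an atom is a positive path of minimal length between two prescribed chambers, and the graph $\Gamma_{\mathcal{H}}$ is symmetric in the sense that every arrow $v_1 \to v_2$ is accompanied by an arrow $v_2 \to v_1$ of opposite orientation. Hence reversing an atom produces a positive path of the same minimal length, and so it is again an atom. Since $\Upphi^{-1}_{\upalpha}$ is, up to direction, the composition of the mutation equivalences along this reversed atomic path, the torsion-pair argument of \cite[\S 5]{HW} applies verbatim, showing that $\Upphi^{-1}_{\upalpha}(S_i)$ lies in a single homological degree.

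Second, since $\Bnohat_i$ is a finite-dimensional local algebra, it admits a finite filtration in which every composition factor is a copy of $S_i$. Applying the triangulated functor $\Upphi^{-1}_{\upalpha}$ produces a corresponding tower of distinguished triangles whose successive cones are $\Upphi^{-1}_{\upalpha}(S_i)$. A short induction on the length of this filtration, using the long exact sequence in cohomology for each triangle, propagates the single-degree property: extensions of objects concentrated in a fixed degree $n$ remain concentrated in degree $n$. This upgrades the conclusion from $S_i$ to $\Bnohat_i$.

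The main obstacle is the first step, namely checking that the torsion-pair setup of \cite[\S 5]{HW} really does apply to the reverse mutation composition. This reduces to the combinatorial observation that atomicity in $\Gamma_{\mathcal{H}}$ is preserved under path reversal, together with the fact that the \cite{HW} argument depends only on this minimality and not on a preferred orientation of the path. The filtration step and the cohomological bookkeeping are formal and should present no real difficulty.
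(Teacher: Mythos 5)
Your proposal is correct and follows essentially the same route as the paper: the paper also reduces to the observation that, by the torsion-pair machinery of \cite[\S 5]{HW}, an atom forces $\Upphi^{-1}_{\upalpha}(S_i)$ to sit in a single homological degree, and then propagates this along the finite filtration of $\Bnohat_i$ by copies of $S_i$. The one thing you add that the paper leaves implicit is the explicit check that reversing an atom in $\Gamma_{\mathcal{H}}$ gives another atom, so that the \cite{HW} argument applies to $\Upphi^{-1}_{\upalpha}$ as readily as to $\Upphi_{\upalpha}$; this is a reasonable clarification, not a new idea.
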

\begin{proof}
\(\Bnohat_{i}\) is filtered by the simple \(S_{i}\). By the torsion pairs described in \cite{HW}, since \(\upalpha\) is an atom we know that \(\Upphi^{-1}_{\upalpha}(S_{i})\) is only in one homological degree. Thus \(\Upphi_{\upalpha}^{-1}(\Bnohat_{i}) \) is in one homological degree only.
\end{proof}
The following is also well known.
\begin{lemma}\label{04 0f 24 january 2023}
With notation as above, the following statements hold.
\begin{enumerate}
\item\label{04 0f 24 january 2023 one} If \(Z\)  is a \(\fl \hat{\Lambda}\)-module, then the natural map  \(Z\otimes_{\hat{\Lambda}} \hat{\Lambda}\otimes_{\Lambdaphat} \hat{\Lambda} \to Z \) is an isomorphism of \(\hat{\Lambda}\)-modules.
\item\label{04 0f 24 january 2023 two}	If \(a\in \Dbfl(\smallmod \hat{\Lambda} ), \) then \(a\otimes_{\hat{\Lambda}} \hat{\Lambda}\otimes_{\Lambdaphat} \hat{\Lambda} \cong a\) in \(\Dbfl( \smallmod \hat{\Lambda})\).
\end{enumerate}

\end{lemma}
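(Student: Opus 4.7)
The plan is to reduce the statement to a multiplication-map identity and then invoke the classical fact that finite length modules are insensitive to completion. First, associativity together with the canonical isomorphism \(Z \otimes_{\hat{\Lambda}} \hat{\Lambda} \cong Z\) will rewrite the source as \(Z \otimes_{\Lambdaphat} \hat{\Lambda}\), and identify the map in the statement with the action map \(z \otimes \mu \mapsto z\mu\). Since $Z$ has finite length over $\hat{\Lambda}$, its restriction along $R \hookrightarrow \Lambda \to \hat{\Lambda}$ is a finite length $R$-module supported only at $\mathfrak{m}$, so the $\Lambda$-action factors through $\Lambda/\mathfrak{m}^{N}\Lambda$ for some $N\ge 1$. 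Therefore
\[
Z \otimes_\Lambda \hat{\Lambda} \;\cong\; Z \otimes_{\Lambda/\mathfrak{m}^{N}\Lambda} \bigl(\hat{\Lambda}/\mathfrak{m}^{N}\hat{\Lambda}\bigr),
\]
and the standard identification \(\hat{\Lambda}/\mathfrak{m}^{N}\hat{\Lambda} \cong \Lambda/\mathfrak{m}^{N}\Lambda\) (which holds because $\Lambda$ is module-finite over the Noetherian ring $R$, so $\hat{\Lambda} \cong \Lambda \otimes_R \hat{R}$) collapses the right hand side back to $Z$. Tracing through these isomorphisms shows the multiplication map is the identity, proving (1).

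\textbf{Plan for (2).} This is a formal cohomology-level upgrade of (1). I will use the standard fact that $\Lambda \to \hat{\Lambda}$ is flat (completion of a Noetherian ring at an ideal is flat), so the functor \(- \otimes_\Lambda \hat{\Lambda}\) is exact and coincides with its derived version. Picking any complex representative for $a$, the termwise multiplication gives a natural morphism of complexes \(a \otimes_\Lambda \hat{\Lambda} \to a\); by exactness,
\[
\mathrm{H}^{i}\bigl(a \otimes_\Lambda \hat{\Lambda}\bigr) \;\cong\; \mathrm{H}^{i}(a) \otimes_\Lambda \hat{\Lambda}.
\]
Each \(\mathrm{H}^{i}(a)\) is finite length by hypothesis, so applying (1) cohomology-wise shows this natural map is an isomorphism onto \(\mathrm{H}^{i}(a)\). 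Hence the morphism of complexes is a quasi-isomorphism, and so an isomorphism in \(\Dbfl(\smallmod \hat{\Lambda})\).

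\textbf{Anticipated obstacle.} I do not expect any serious difficulty. The whole argument is a formal consequence of flatness of completion together with the identity \(\hat{\Lambda}/\mathfrak{m}^{N}\hat{\Lambda} = \Lambda/\mathfrak{m}^{N}\Lambda\). The only care required is bookkeeping the non-commutative side conventions that were fixed in the conventions at the end of $\S$1, and verifying the finite-length-implies-\(\mathfrak{m}^{N}\)-annihilation step, which is immediate once one restricts to the central subring $R$.
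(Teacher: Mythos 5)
Your proof is correct, and it takes a genuinely different route from the paper. The paper's argument for (1) is categorical: it assembles the adjunction \(\fl\Lambda \rightleftarrows \fl\hat{\Lambda}\) from localization/completion pieces, cites Iyama--Reiten for full faithfulness of the restriction functor, and concludes the counit is an isomorphism. Your argument instead is elementary commutative algebra: since \(Z\) is killed by some \(\mathfrak{m}^{N}\), the tensor \(Z\otimes_{\Lambda}\hat{\Lambda}\) factors through \(\Lambda/\mathfrak{m}^{N}\Lambda\), and the identification \(\hat{\Lambda}/\mathfrak{m}^{N}\hat{\Lambda}\cong\Lambda/\mathfrak{m}^{N}\Lambda\) (valid because \(\hat{\Lambda}\cong\Lambda\otimes_{R}\hat{R}_{\mathfrak{m}}\), as \(\Lambda\) is module-finite over \(R\)) collapses everything back to \(Z\) via the multiplication map. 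The two arguments buy roughly the same thing, but yours is more self-contained: it does not require knowing the \cite{IR} full-faithfulness result. For (2) the divergence is similar: the paper cites \cite[2.5]{IR} to represent \(a\) by a complex of finite-length modules and applies (1) termwise, whereas you observe that \(\hat{\Lambda}\) is flat over \(\Lambda\) (since \(-\otimes_{\Lambda}\hat{\Lambda}=-\otimes_{R}\hat{R}_{\mathfrak{m}}\) and \(R\to\hat{R}_{\mathfrak{m}}\) is flat), so cohomology commutes with the tensor and (1) can be applied degree-by-degree on \(\mathrm{H}^{i}(a)\) directly, yielding a quasi-isomorphism without needing the \(\Db(\fl\hat{\Lambda})\simeq\Dbfl(\smallmod\hat{\Lambda})\) equivalence. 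Both approaches are sound; yours trades categorical input for a slightly longer hands-on verification, which some readers would find more transparent.
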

\begin{proof}
(1) Consider the following  functors between categories  
\[  \begin{tikzcd}
\fl \Lambda \ar[r, shift left=1ex, "\otimes_{\Lambdaphat}\Lambda_{\mathfrak{m}_{\Lambda_{\mathfrak{m}}}}"] &	\fl \Lambda_{\mathfrak{m}} \ar[r,shift left=1ex,"\otimes_{\Lambdaphat_{\mathfrak{m}}}\hat{\Lambda}_{\hat{\Lambda}}"]  \arrow[l,   hook', shift left= 1ex, "\otimes_{\Lambdaphat_{\mathfrak{m}}}\Lambda_{\mathfrak{m}_{\Lambda}}" ]& \fl \hat{\Lambda}  \ar[l, shift left=1ex, "\otimes_{\hat{\Lambda}} \hat{\Lambda}_{\Lambdaphat_{\mathfrak{m}}}"],
\end{tikzcd} \]  
where the bottom  left  functor is fully  faithful by \cite{IR}. By \cite[2.15]{DW1}, \cite[p1100]{IR}, the rightmost functors are an equivalence of categories, thus composing we have  
\[
\begin{tikzcd}
\fl \Lambda \ar[r, shift left=1ex] & \fl \hat{\Lambda}  \arrow[l, shift left=1ex,  hook', "\otimes_{\hat{\Lambda}}\hat{\Lambda}_{\Lambdaphat}" ].
\end{tikzcd} 
\] 
The claimed morphism is the counit, and this is an isomorphism since the right adjoint functor \( \otimes_{\hat{\Lambda}}\hat{\Lambda}_{\Lambdaphat}\) is fully faithful.\\
(2)  By  \cite[2.5]{IR} there is an equivalence of categories \(\Db(\fl \hat{\Lambda}) \xrightarrow{\sim } \Dbfl(\smallmod \hat{\Lambda})\), so  
\[
a\cong \cdots \to a_{i+1} \to a_{i}\to a_{i-1} \to \cdots
\] 
with each \(a_{i}\) a finite length \(\hat{\Lambda}\)-module. Since by  Lemma \ref{04 0f 24 january 2023}\eqref{04 0f 24 january 2023 one} \(a_{i}\otimes_{\hat{\Lambda}} \hat{\Lambda} \otimes_{\Lambdaphat} \hat{\Lambda} \cong a_{i}\)  via the counit,  it follows that \(a \cong a \otimes_{\hat{\Lambda}} \hat{\Lambda} \otimes_{\Lambdaphat} \hat{\Lambda}\)  as complexes, as required.
\end{proof}
\section{ Construction of Bimodule Cones }\label{twistfun}
In the local Zariski setting of \ref{01 of 23 april 2023} where \(\Lambda\) is derived equivalent to \(U,\) this section uses bimodule cones to construct endofunctors of \(U.\) 
\begin{prop}\label{01 of  06 December 2022} 
For any choice of \((\upalpha,i)\) as in Setup \textnormal{\ref{bi setup}}, there exists \({}_{\Lambda}C_{\Lambda}=C_{\upalpha,i}\) in the derived category of \(\Lambda\)-\(\Lambda\) bimodules, such that, given  the functorial diagram
\begin{eqnarray}\label{bimodules1}
\begin{array}{c}
\begin{tikzpicture}[looseness=1,bend angle=15]
\node (a1) at (0,0) {$\D(\Bnohat_{i})$};
\node (a2) at (3,0) {$\D( \Lambda)$};
\draw[->] (a1) -- node[gap] {$\scriptstyle \Uppsi_{\upalpha,i}$} (a2);
\draw[->,bend right] (a2) to node[above] {$\scriptstyle \Uppsi_{\upalpha,i}^{\LA}$} (a1);
\draw[->,bend left] (a2) to node[below] {$\scriptstyle \Uppsi_{\upalpha,i}^{\RA}$} (a1);										
\end{tikzpicture}
\end{array},
\end{eqnarray}
setting \(\TW \colonequals\RHom_{\Lambda} (C_{\Lambda},-)\) and   \(\TW^{*} \colonequals -\otimes^{\bf{L}}_{\Lambda}C_{\Lambda},\) there are  functorial triangles 
\begin{enumerate}
\item \(\Uppsi_{\upalpha,i}\circ  \Uppsi_{\upalpha,i}^{\RA} \to \Id_{\Lambda}\to \TW_{\upalpha,i} \to\)
\item \( \TW^{*}_{\upalpha,i} \to \Id_{\Lambda}\to \Uppsi_{\upalpha,i}\circ  \Uppsi_{\upalpha,i}^{\LA} \to.\)	\end{enumerate}
\end{prop}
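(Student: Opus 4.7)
My plan is to build $C_{\upalpha,i}$ as the fibre, in the bounded derived category of $(\Lambda,\Lambda)$-bimodules, of an explicit morphism $\Lambda\to M$, and to deduce both triangles by applying $\RHom_\Lambda(-,X)$ and $X\otimes^{\bf L}_\Lambda -$ to the resulting bimodule triangle. Using Lemma~\ref{01 of 4th november 2022}, write $\Uppsi_{\upalpha,i}$ simultaneously as a tensor functor $-\otimes^{\bf L}_{\Bnohat_i}P$ for the $(\Bnohat_i,\Lambda)$-bimodule $P=\Bnohat_i\otimes^{\bf L}_{\Bnohat}\uptau^{*}_\upalpha\otimes^{\bf L}_{\Anohat}\mathbb{M}^{*}\otimes^{\bf L}_{\hat{\Lambda}}\hat{\Lambda}$, and as a Hom functor $\RHom_{\Bnohat_i}(B,-)$ for the corresponding $(\Lambda,\Bnohat_i)$-bimodule $B=\hat{\Lambda}\otimes^{\bf L}_{\hat{\Lambda}}\mathbb{M}\otimes^{\bf L}_{\Anohat}\uptau_\upalpha\otimes^{\bf L}_{\Bnohat}\Bnohat_i$. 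The adjoints are then the standard $\Uppsi_{\upalpha,i}^{\RA}\cong\RHom_\Lambda(P,-)$ and $\Uppsi_{\upalpha,i}^{\LA}\cong -\otimes^{\bf L}_\Lambda B$. Hom--tensor adjunction then gives
\[
\Uppsi_{\upalpha,i}\circ\Uppsi_{\upalpha,i}^{\RA}(X)\cong\RHom_\Lambda(M,X),\qquad \Uppsi_{\upalpha,i}\circ\Uppsi_{\upalpha,i}^{\LA}(X)\cong X\otimes^{\bf L}_\Lambda M,
\]
where $M:=B\otimes^{\bf L}_{\Bnohat_i}P$ is the single $(\Lambda,\Lambda)$-bimodule realising both composites, and $M$ has exactly the shape of the ``big bimodule'' predicted in the introduction, with $Z_{\upalpha,i}=\Bnohat_i\otimes^{\bf L}_{\Bnohat}\uptau^{*}_\upalpha\otimes^{\bf L}_{\Anohat}\mathbb{M}^{*}$.

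Next, I produce the bimodule map $\Lambda\to M$ as the unit of the adjunction $(\Uppsi_{\upalpha,i}^{\LA},\Uppsi_{\upalpha,i})$ at $\Lambda$: via the identification $\Uppsi\Uppsi^{\LA}(\Lambda)=\Lambda\otimes^{\bf L}_\Lambda M=M$, this is a morphism in the derived category of $(\Lambda,\Lambda)$-bimodules, obtained concretely by composing the unit maps of each step in the chain of adjunctions (the ring homomorphisms $\Lambda\to\hat{\Lambda}$ and $\Bnohat\to\Bnohat_i$ give the only non-trivial pieces; the Morita and mutation pieces are equivalences and contribute isomorphisms). Standard adjunction calculus (cf.\ \cite[p.44]{R3}) then guarantees that $\RHom_\Lambda(-,X)$ applied to $\Lambda\to M$ is the counit $\Uppsi\Uppsi^{\RA}(X)\to X$, while $X\otimes^{\bf L}_\Lambda -$ applied to it is the unit $X\to \Uppsi\Uppsi^{\LA}(X)$. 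Defining $C_{\upalpha,i}$ by the fibre triangle
\[
C_{\upalpha,i}\to \Lambda\to M\to C_{\upalpha,i}[1]
\]
in the derived category of bimodules, and setting $\TW_{\upalpha,i}:=\RHom_\Lambda(C_{\upalpha,i},-)$ and $\TW_{\upalpha,i}^{*}:=-\otimes^{\bf L}_\Lambda C_{\upalpha,i}$, triangles~(1) and~(2) drop out as the images of this bimodule triangle under the two functors, functorially in $X$.

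The step I expect to be the main obstacle is verifying that $M$, and therefore $C_{\upalpha,i}$, genuinely lie in $\Db$ of bimodules, and that the above Hom/tensor identifications of $\Uppsi\Uppsi^{\RA}$ and $\Uppsi\Uppsi^{\LA}$ survive the passage to bounded derived categories. This reduces to boundedness and perfectness control on $\Upphi_\upalpha^{\pm 1}(\Bnohat_i)$ and its transport across the Morita equivalence and the completion $\Lambda\to\hat{\Lambda}$. The inputs are already in hand: Lemma~\ref{02 of 24 january 2023} gives single-degree concentration of $\Upphi_\upalpha^{-1}(\Bnohat_i)$, Claim~\ref{01 0f 20th november 2022} pins down finite length and $\mathfrak{m}$-support of $\mathbb{F}\Upphi_\upalpha(\Bnohat_i)$, and Lemma~\ref{04 0f 24 january 2023} shows that completion-restriction is harmless on finite length modules. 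Combining these ensures $M$ is bounded, the bimodule cone construction stays inside $\Db$, and the two triangles are bona fide distinguished triangles in $\Db(\smallmod\Lambda)$.
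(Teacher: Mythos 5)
Your proposal is correct and follows essentially the same route as the paper: you identify the single bimodule $M=B\otimes^{\bf L}_{\Bnohat_i}P$ realising both composites, construct the bimodule map $\Lambda\to M$ by composing the ring homomorphisms $\Lambda\to\hat\Lambda$ and $\Bnohat\to\Bnohat_i$ with the isomorphisms from the Morita and mutation equivalences (which is exactly the paper's step-by-step tensoring argument, just phrased via adjunction units), and take the fibre to produce $C_{\upalpha,i}$, reading off both triangles from the bimodule triangle. One small remark: the boundedness/perfectness worry you raise at the end is not actually needed for this proposition, which only asserts $C_{\upalpha,i}$ lives in the (unbounded) derived category of bimodules; the paper postpones the perfectness of $C_\Lambda$ to a later section (Proposition~\ref{02 0f 23 april 2023}), where it is established using precisely the ingredients you list.
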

\begin{proof}
(1)  We expand  the diagram in \eqref{bimodules1}  to 
\[
\begin{tikzcd}[sep=normal,cramped]
\D(\Bnohat_{i}) \arrow[r,"\mathrm{H}"] 
& \D(\Bnohat) \arrow[r,"\Upphi_{\upalpha}"] \arrow[l, bend left=60, ,"\mathrm{H}^{\RA}"] &	\D\left(\Anohat\right) \arrow[l, bend left=50, ,"\Upphi_{\upalpha}^{\RA}"] 
\arrow[r,"\mathbb{F}"] 
& \D(\hat{\Lambda})\arrow[r,"\mathrm{F}"]\arrow[l, bend left=50, ,"\mathbb{F}^{\RA}"] 	& \D(\Lambda)\arrow[l, bend left=50, ,"\mathrm{F}^{\RA}"] 					
\end{tikzcd}\] 
where
\[
\begin{aligned}
 \mathrm{H}= \RHom_{\Bnohat_{i}}( {}_{\Bnohat} {\Bnohat_{i}},-) &\cong -\otimes^{\bf{L}}_{\Bphat_{i}}\!{\Bnohat_{i}}_{\Bnohat}  &\qquad  \mathrm{H}^{\RA} &=\RHom_{\Bnohat}( {}_{\Bnohat_i}{\Bnohat_{i}},-)\\
 \Upphi_{\upalpha}= \RHom_{\Bnohat}( {}_{\Anohat} \uptau_{\upalpha},-) &\cong -\otimes^{\bf{L}}_{\Bphat} {\uptau^{*}_{\upalpha}}_{\Anohat}  &\qquad  \Upphi_{\upalpha}^{\RA} &=\RHom_{\Anohat}({}_{\Bnohat}\uptau^{*}_{\upalpha},-)\\
 \mathbb{F}=\RHom_{\Anohat}({}_{\hat{\Lambda}} \mathbb{M},-) &\cong -\otimes^{\bf{L}}_{\Aphat}\mathbb{M}^{*}_{\hat{\Lambda}}
 &\qquad  \mathbb{F}^{\RA} &= \mathbf{R}\text{Hom}_{\hat{\Lambda}}({} _{\Aphat}\mathbb{M}^{*},-)\\
 \mathrm{F}=\RHom_{\hat{\Lambda}}( {}_{\Lambda} \hat{\Lambda},-) &\cong -\otimes^{\bf{L}}_{\hat{\Lambda}}\hat{\Lambda}_{\Lambda}
 &\qquad  \mathrm{F}^{\RA} &=\mathbf{R}\text{Hom}_{\Lambdaphat}( {}_{\hat{\Lambda} }\hat{\Lambda},-)		
\end{aligned}
\]

We will construct \(\Uppsi_{\upalpha,i}\circ \Uppsi_{\upalpha,i}^{\RA}\to \Id_{\Lambda}\) by exhibiting  a certain bimodule map. First, the ring homomorphism \(\Bnohat\to \Bnohat_{i}\) is a map of $\Bnohat$-$\Bnohat$ bimodules and thus induces a natural transformation $\RHom_{\Bnohat}( {}_{\Bnohat}{\Bnohat_{i}}_{\Bnohat},-)\to \RHom_{\Bnohat}(\Bnohat, -)$.  But this is equal to
\begin{align*}
&=\RHom_{\Bnohat}({}_{\Bnohat}\!\Bnohat_{i} \otimes^{\bf{L}}_{\Bphat_i}  {\Bnohat_{i}}_{\Bnohat},-)\to \Id_{\D(\Bnohat)}\\ 
&=\mathrm{H}\circ \mathrm{H}^{\RA} \to \Id_{\D(\Bnohat)}						
\end{align*}
					
We next construct a natural transformation  \( \Upphi_{\upalpha }\circ \mathrm{H} \circ \mathrm{H}^{\RA}\circ \Upphi_{\upalpha}^{\RA} \to \Id_{\D(A)}.\) Tensoring \(\Bnohat\to \Bnohat_{i}\) on the left by  \(\uptau_{\upalpha}\) and on the right by  \(\uptau^{*}_{\upalpha}\) gives rise to a bimodule map 
\[ 
\Anohat \xrightarrow{\sim} 
\uptau_{\upalpha}\otimes^{\bf{L}}_{\Bphat}\uptau^{*}_{\upalpha}\cong 
\uptau_{\upalpha}\otimes^{\bf{L}}_{\Bphat}\Bnohat\otimes^{\bf{L}}_{\Bphat}\,\uptau^{*}_{\upalpha}
\to 
\uptau_{\upalpha}\otimes^{\bf{L}}_{\Bphat} \Bnohat_{i}\otimes^{\bf{L}}_{\Bphat} \,\uptau^{*}_{\upalpha} \tag{the first map is by \cite[4.2]{R3}}  
\] 
which induces the claimed \( \Upphi_{\upalpha }\circ \mathrm{H} \circ \mathrm{H}^{\RA}\circ \Upphi_{\upalpha}^{\RA} \to \Id_{\D(A)}.\)

Now, applying the same trick for \(\mathbb{F},\) setting  \(Z_{\upalpha,i}\colonequals \Bnohat_{i}\otimes_{\Bphat}^{\bf{L}} \,\uptau_{\upalpha}^{*}\otimes_{\Aphat}^{\bf{L}} \mathbb{M}^{*},\) then  \begin{equation}\label{01 of 5th december 2022}
{\scriptstyle
\begin{array}{c}
\begin{tikzpicture}[looseness=1,bend angle=10]
\node (a1) at (0,0) {$\hat{\Lambda}$};
\node (a2) at (3,0) {$ \mathbb{M}\otimes_{\Aphat}^{\bf{L}}\mathbb{M}^{*}\cong \mathbb{M}\otimes_{\Aphat}^{\bf{L}}\!\Anohat \otimes_{\Aphat}^{\bf{L}}\, \mathbb{M}^{*} $};
\node (a3) at (9,0) {$ \mathbb{M}\otimes^{\bf{L}}_{\Aphat} \left( \uptau_{\upalpha}\otimes^{\bf{L}}_{\Bphat} \Bnohat_{i}\otimes^{\bf{L}}_{\Bphat} \uptau^{*}_{\upalpha}\right) \otimes^{\bf{L}}_{\Aphat} \mathbb{M}^{*}  \cong \mathbb{M}\otimes^{\bf{L}}_{\Aphat}  \uptau_{\upalpha}\otimes^{\bf{L}}_{\Bphat} Z_{\upalpha,i} $};
\draw[->] (a1)-- node [above]{$\sim$} (a2);
\draw[->] (a2) -- node []{$ $} (a3);
	\draw[ ->, bend right] (a1) to node[below] {$\scriptstyle h$} (a3);
\end{tikzpicture}\end{array}}
\end{equation}
which induces   \(\mathbb{F}\circ \Upphi_{\upalpha}\circ \mathrm{H} \circ \mathrm{H}^{\RA}\circ \Upphi_{\upalpha}^{\RA} \circ \mathbb{F}^{\RA}\to \Id_{\D(\hat{\Lambda})}.\)  

Lastly, tensoring \eqref{01 of 5th december 2022} on both sides by \(\hat{\Lambda}\) gives a \(\hat{\Lambda}\)-\(\hat{\Lambda}\) bimodule map
\begin{eqnarray}\label{01 of 25 january 2023}
\begin{array}{c}
\begin{tikzpicture}[looseness=1,bend angle=10]
\node (a1) at (0.5,0) {$\hat{\Lambda}$};
\node (a2) at (3,0) {$ \hat{\Lambda}\otimes^{\bf{L}}_{\hat{\Lambda}} \hat{\Lambda}\otimes^{\bf{L}}_{\hat{\Lambda}} \hat{\Lambda}$};
\node (a3) at (8.3,0) {$ \hat{\Lambda} \otimes^{\bf{L}}_{\hat{\Lambda}} \left(  \mathbb{M}\otimes^{\bf{L}}_{\Aphat}  \uptau_{\upalpha}\otimes^{\bf{L}}_{\Bphat}Z_{\upalpha,i} \right)\otimes^{\bf{L}}_{\hat{\Lambda}} \hat{\Lambda}  $};
\draw[->] (a1)-- node [above]{$ \scriptstyle \sim$} (a2);
\draw[->] (a2) -- node [above]{$ \scriptstyle 1\otimes h\otimes 1 $} (a3);
\draw[ ->, bend right] --(a1) to node[below] {$\scriptstyle f$} (a3);
\end{tikzpicture}\end{array}
\end{eqnarray}
Composing this with the ring homomorphism \(\Lambda \to \hat{\Lambda}\) thus  gives a bimodule map 
\[
\Lambda \to {}_{\Lambda}\hat{\Lambda} \otimes \left(  \mathbb{M}\otimes  \uptau_{\upalpha}\otimes Z_{\upalpha,i}\right)\otimes\hat{\Lambda}_{\Lambda}.
\]					
Taking the cone in the derived category of \(\Lambda\)-\(\Lambda\) bimodules gives a triangle 
\begin{equation}\label{01 of 24 january 2023}
C \to {}_{\Lambda}\Lambda_{\Lambda} \to {}_{\Lambda}\hat{\Lambda} \otimes^{\bf{L}}_{\hat{\Lambda}} \left(  \mathbb{M}\otimes^{\bf{L}}_{\Aphat}  \uptau_{\upalpha}\otimes^{\bf{L}}_{\Bphat} Z_{\upalpha,i} \right)\otimes^{\bf{L}}_{\hat{\Lambda}} \hat{\Lambda}_{\Lambda}\to  C[1].
\end{equation} 
This induces a functorial triangle \(\Uppsi_{\upalpha,i}\circ  \Uppsi_{\upalpha,i}^{\RA} \to \Id_{\Lambda}\to \RHom ({}_{\Lambda}C_{\Lambda},-)\to.\) Defining \(\TW_{\upalpha,i}\colonequals \RHom ({}_{\Lambda}C_{\Lambda},-),\) yields the result.\\
(2)  \eqref{01 of 24 january 2023} also induces a functorial triangle \(-\otimes^{\bf{L}}_{\Lambda} C_{\Lambda}\to \Id \to \Uppsi_{\upalpha,i}\circ \Uppsi_{\upalpha,i}^{\LA}\to, \) thus defining \(\TW_{\upalpha,i}^{*}\colonequals -\otimes^{\bf{L}}_{\Lambda} C_{\Lambda}\) gives the claim.
\end{proof}

\begin{remark}
In the proof of Proposition \ref{01 of  06 December 2022},  \(Z_{\upalpha,i}\colonequals \Bnohat_{i}\otimes_{\Bphat}^{\bf{L}} \,\uptau_{\upalpha}^{*}\otimes_{\Aphat}^{\bf{L}} \mathbb{M}^{*}\).  We will use this notation below.
\end{remark}
\begin{remark} \label{01 of 21 feb 2023}
 \(\TW_{\upalpha,i}^{*}\) is the left adjoint of \(\TW_{\upalpha,i}.\) Later, in Remark \ref{inverse of twist} we will prove that it is the inverse.
\end{remark}
\section{The Key Commutative Diagram}\label{section key commutative diagram}
Under the Zariski local Setup \ref{01 of 23 april 2023}   and  given any choice of \((\upalpha,i)\) as defined in Setup \ref{bi setup}, this section proves that the functors \(\TW_{\upalpha,i}\) and \(\TW^{*}_{\upalpha,i}\) constructed in $\S\ref{twistfun}$ intertwine with the known equivalences in the complete local setting,    through a key commutative diagram.

Given a choice of \((\upalpha,i),\) in particular with associated homomorphism  \(\Bnohat\to \Bnohat_{i},\) set \(I_{i}\) to be the kernel  of the homomorphism, which is a two-sided ideal of \(\Bnohat.\)
\begin{claim}\label{infinite Ii}
Given an atom \(\upalpha\colon C_{+} \to D\) and a wall \(i\) of \(D,\)	set \(M=N_{D} \) so that \(\Bnohat\colonequals \End_{\scrR}(M) \) and  \(   \upnu_{\kern-1pt i}\kern-2pt \Bnohat=\End_{\scrR}(\upnu_{i}M).\) Then \[I_{i}\cong \Hom_{\scrR}(M, \upnu_{i}M) \otimes^{\bf{L}}_{ \upnu_{\kern-1pt i}\kern-2pt \Bnohat} \Hom_{\scrR}( \upnu_{i}M,M) \] as \(\Bnohat\)-bimodules.
\end{claim}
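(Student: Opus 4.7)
The plan is to exhibit a natural composition map from the tensor into $\Bnohat$, identify its image with $I_{i}$ via the mutation exchange sequence and $\Ext$-vanishing, and then lift to a derived-level isomorphism through a suitable projective resolution. Concretely, the natural composition
\[
c \colon \Hom_{\scrR}(M,\upnu_{i}M)\otimes_{\upnu_{i}\Bnohat}\Hom_{\scrR}(\upnu_{i}M,M) \longrightarrow \Bnohat,\qquad h\otimes g \mapsto g\circ h,
\]
is a $\Bnohat$-bimodule map whose image is the ideal of endomorphisms of $M$ that factor through $\upnu_{i}M$.

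To identify $\mathrm{Im}(c)$ with $I_{i}$, I will use $\upnu_{i}M = M/M_{i}\oplus K_{i}$ together with the exchange sequence $0\to K_{i}\to V_{i}\xrightarrow{a_{i}} M_{i}\to 0$ and $V_{i}\in\mathrm{add}(M/M_{i})$ coming from the right approximation defining mutation. The inclusion $I_{i}\subseteq\mathrm{Im}(c)$ follows immediately since $M/M_{i}$ is a summand of $\upnu_{i}M$. For the reverse, the modifying hypothesis $\Bnohat\in\CM\scrR$ forces $\Ext^{1}_{\scrR}(M_{i},M)=0$, so every morphism $K_{i}\to M$ extends along $K_{i}\hookrightarrow V_{i}$; any composition $M\to K_{i}\to M$ therefore factors through $V_{i}\in\mathrm{add}(M/M_{i})$, placing it in $I_{i}$.

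At the derived level, the plan is to build a two-term projective resolution of $\Hom_{\scrR}(\upnu_{i}M,M)$ over $\upnu_{i}\Bnohat$ from the exchange sequence: since $K_{i},V_{i},M/M_{i}\in\mathrm{add}(\upnu_{i}M)$ and $\Ext^{1}_{\scrR}(\upnu_{i}M,K_{i})=0$ by the modifying property, the sequence
\[
0\to \Hom_{\scrR}(\upnu_{i}M,K_{i})\to \Hom_{\scrR}(\upnu_{i}M,V_{i}\oplus M/M_{i})\to \Hom_{\scrR}(\upnu_{i}M,M)\to 0
\]
is exact with both left-hand terms projective. Applying $\Hom_{\scrR}(M,\upnu_{i}M)\otimes^{\mathbf{L}}_{\upnu_{i}\Bnohat}(-)$ together with the Yoneda identification $\Hom_{\scrR}(M,\upnu_{i}M)\otimes_{\upnu_{i}\Bnohat}\Hom_{\scrR}(\upnu_{i}M,X)\cong\Hom_{\scrR}(M,X)$ for $X\in\mathrm{add}(\upnu_{i}M)$ produces a short complex of $\Bnohat$-bimodules concentrated in degree zero, which $c$ identifies with $I_{i}$.

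The main obstacle is pinning down the bimodule structure so that the derived tensor lands on $I_{i}$ exactly rather than on a larger bimodule: the one-sided projective resolution only tracks half of the bimodule structure, so the argument requires careful compatibility of the left and right $\Bnohat$-actions inherited from the two $\Hom$-factors. The key technical input is $\Ext^{1}_{\scrR}$-vanishing between the summands of $M$ and $\upnu_{i}M$, a direct consequence of $\Bnohat,\upnu_{i}\Bnohat\in\CM\scrR$ in the cDV setting, which is what ultimately rules out any extraneous contribution from the new summand $K_{i}$.
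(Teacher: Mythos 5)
Your proposal closely parallels the paper's argument. Both reduce the claim to showing (i) the derived tensor is concentrated in degree zero, and (ii) the zeroth cohomology, i.e.\ the underived tensor, is identified with $I_i$ via the composition map $c\colon h\otimes g\mapsto g\circ h$. For (i) you build a two-term projective resolution of $\Hom_{\scrR}(\upnu_i M,M)$ over $\upnu_i\Bnohat$ and invoke the Yoneda identification, whereas the paper simply cites the proof of \cite[5.10(1)]{DW1}. For (ii) you argue directly by factorisation through $\mathrm{add}(\upnu_i M)$ and kill the contribution of $K_i$ using $\Ext^1_{\scrR}(M_i,M)=0$, whereas the paper compares the sequence $\Hom_{\scrR}(M,-)$ applied to the exchange sequence against $\Hom_{\scrR}(M,\upnu_i M)\otimes_{\upnu_i\Bnohat}\Hom_{\scrR}(\upnu_i M,-)$, proving the needed exactness of the latter via the reflexive equivalence to $\End_{\scrR}(M/M_i)$-modules and \cite[2.7]{IW2}. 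Your route is more self-contained; note only that you have tacitly assumed the exchange sequence $0\to K_i\to V_i\to M_i$ is also right exact, which the paper's reflexive-equivalence argument does not need.

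The ``main obstacle'' you flag at the end is not actually a gap, and you already hold every ingredient needed to close it. Concentration in degree zero is a property of the underlying one-sided complex and is therefore detected by your one-sided resolution; once known, the derived tensor is isomorphic to its $\mathrm{H}^0$, namely the underived tensor, in the derived category of $\Bnohat$-bimodules, simply because truncation is exact and commutes with the forgetful functor. Your map $c$ is manifestly a $\Bnohat$-bimodule map (left action by postcomposition in the first factor, right action by precomposition in the second), and your paragraphs two and three show it is an isomorphism onto $I_i$. Putting these two observations together finishes the bimodule statement; this is precisely the mechanism the paper uses, where the one-sided degree-zero concentration is quoted from \cite{DW1} and then promoted to bimodules by truncation before the explicit bimodule map to $\Bnohat e_i\Bnohat$ is produced. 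So rather than an unresolved obstacle, you should record this as the final step of the proof.
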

\begin{proof}
The proof of \cite[5.10(1)]{DW1} shows \[I_{i}\cong \Hom_{\scrR}(M, \upnu_{i}M) \otimes^{\bf{L}}_{   \upnu_{\kern-1pt i}\kern-2pt \Bnohat} \Hom_{\scrR}( \upnu_{i}M,M) \] as right modules by showing that \(\Hom_{\scrR}(M, \upnu_{i}M) \otimes^{\bf{L}}_{   \upnu_{\kern-1pt i}\kern-2pt \Bnohat} \Hom_{\scrR}( \upnu_{i}M,M)\)  is concentrated in degree zero. Hence truncating in the category of bimodules there is an isomorphism of  \(\Bnohat\)-bimodules
\[\Hom_{\scrR}(M, \upnu_{i}M) \otimes^{\bf{L}}_{   \upnu_{\kern-1pt i}\kern-2pt \Bnohat} \Hom_{\scrR}( \upnu_{i}M,M) \cong \Hom_{\scrR}(M, \upnu_{i}M) \otimes_{   \upnu_{\kern-1pt i}\kern-2pt \Bnohat} \Hom_{\scrR}( \upnu_{i}M,M),  \] so it suffices to show that 
\[\Hom_{\scrR}(M, \upnu_{i}M) \otimes_{   \upnu_{\kern-1pt i}\kern-2pt \Bnohat} \Hom_{\scrR}( \upnu_{i}M,M)\cong \Bnohat e_{i} \Bnohat\] as \(B\)-bimodules.

Adding \(\frac{M}{M_{i}} \xrightarrow{\Id}\frac{M}{M_{i}}\)  approximately to the exchange sequence \eqref{intro sequence}, there exists an exact sequence
\begin{equation}\label{exchange sequence}
0\to \upnu_{i}M \to V\xrightarrow{b} M,
\end{equation} where \(b\) is an   \(\left(\mathrm{add}\frac{M}{M_{i}} \right)\)-approximation  of \(M\). We now claim that applying \(\Hom_{\scrR}(\upnu_{i}M,-)\) to \eqref{exchange sequence} gives an exact sequence 
\begin{equation}\label{exact sequence}
	0\to \Hom_{\scrR}(\upnu_{i}M, \upnu_{i}M) \to  \Hom_{\scrR}(\upnu_{i}M, V)  \to  \Hom_{\scrR}(\upnu_{i}M, M) \to 0.
\end{equation}  To see this, set \(\Gamma=\End_{\scrR}\left( \frac{M}{M_{i}}\right)\) and  \(\mathbb{G}=\Hom_{\scrR}\left( \frac{M}{M_{i}},-\right)\), then since  \(b\) is an \(\left(\mathrm{add}\frac{M}{M_{i}} \right)\)-approximation  of \(M,\) \[0\to \mathbb{G}(\upnu_{i}M) \to \mathbb{G}V \to \mathbb{G}M \to 0\]
is exact. Applying \(\Hom_{\Gamma}(\mathbb{G}(\upnu_{i}M),-)\)  and dropping Hom's gives a commutative diagram
{\scriptsize
\[
\begin{array}{c}
	\begin{tikzpicture}
		\node (a1) at (-2,0) {$ 0$};
		\node (a2) at (0,0) {${}_{\Gamma}(\mathbb{G}(\upnu_{i}M), \mathbb{G}(\upnu_{i}M))$};
		\node (a3) at (3,0) {${}_{\Gamma}(\mathbb{G}(\upnu_{i}M), \mathbb{G}V) $};
		\node (a4) at (6,0) {${}_{\Gamma}(\mathbb{G}(\upnu_{i}M), \mathbb{G}M)$};
		\node (a5) at (9,0) {$\Ext^{1}_{\Gamma}(\mathbb{G}(\upnu_{i}M), \mathbb{G}(\upnu_{i}M))$};
			\node (b1) at (-2,-1.5) {$ 0 $};
		\node (b2) at (0,-1.5) {$ {}_{\scrR}(\upnu_{i}M, \upnu_{i}M)$};
		\node (b3) at (3,-1.5) {$ {}_{\scrR}(\upnu_{i}M, V)$};
		\node (b4) at (6,-1.5) {$ {}_{\scrR}(\upnu_{i}M, M)$};
		\node (b5) at (9,-1.5) {$ 0 $};
		\draw[->] (a1) to node[above] {$ $} (a2);
		\draw[->] (a2) to node[above] {$ $} (a3);
		\draw[->] (a3) to node[above] {$ $} (a4);
		\draw[->] (a4) to node[above] {$ $} (a5);
		\draw[->] (b1) to node[above] {$ $} (b2);
		\draw[->] (b2) to node[left] {$ $} (b3);
		\draw[->] (b3) to node[right] {$ $} (b4);
		\draw[->] (b4) to node[right] {$ $} (b5);
\draw [->](a2) to node[left] {$\cong $} node[right] {$\mathrm{ref~equiv}$} (b2);
\draw [->](a3) to node[left] {$\cong $} node[right] {$\mathrm{ref~equiv}$} (b3);
\draw [->](a4) to node[left] {$\cong $} node[right] {$\mathrm{ref~equiv}$} (b4);
	\end{tikzpicture}
\end{array}
\]} in which the top line is exact.

By e.g \cite[6.10(2)]{IW2}, \(\End_{\Gamma}(\mathbb{G}(\upnu_{i}M)) \cong \End_{\scrR}(\upnu_{i}M)\in \mathrm{CM}\scrR, \) so \[\fl_{\scrR}\Ext^{1}_{\Gamma}(\mathbb{G}(\upnu_{i}M), \mathbb{G}(\upnu_{i}M))=0,\] by \cite[2.7]{IW2}. Since \(\scrR\)  is isolated, \(\Ext^{1}=0.\) Thus the bottom line is also exact, as claimed.

Since \eqref{exact sequence} is exact, applying \(\Hom_{\scrR}(M, \upnu_{i}M)\otimes_{   \upnu_{\kern-1pt i}\kern-2pt \Bnohat}-\) gives an exact sequence  in the top line of the following commutative diagram {\scriptsize
\[
\begin{array}{c}
\begin{tikzpicture}[scale=0.95]
\node (a2) at (1.9,0) {${}_{\scrR}(M,\upnu_{i}M)\otimes_{  \upnu_{\kern-1pt i}\kern-2pt \Bnohat}{}_{\scrR}(\upnu_{i}M,\upnu_{i}M)$};
\node (a3) at (6.4,0) {${}_{\scrR}(M,\upnu_{i}M)\otimes_{ \upnu_{\kern-1pt i}\kern-2pt \Bnohat}{}_{\scrR}(\upnu_{i}M, V) $};
\node (a4) at (10.9,0) {${}_{\scrR}(M,\upnu_{i}M)\otimes_{ \upnu_{\kern-1pt i}\kern-2pt \Bnohat}{}_{\scrR}(\upnu_{i}M, M)$};
\node (a5) at (13.4,0) {$0 $};
\node (c1) at (10.9,-2) {$\Bnohat$};
\node (b2) at (1.9,-1.5) {$ {}_{\scrR}(M, \upnu_{i}M)$};
\node (b3) at (6.4,-1.5) {$ {}_{\scrR}(M, V)$};
\node (b4) at (10.9,-1.5) {$ {}_{\scrR}(M, M)$};
\node (b5) at (12.4,-1.5) {$ \frac{\Bnohat}{\Bnohat e_{i} \Bnohat}$};
\node (b6) at (13.4,-1.5) {$0,$};
\draw[->] (a2) to node[above] {$ $} (a3);
\draw[->] (a3) to node[above] {$ $} (a4);
\draw[->] (a4) to node[above] {$ $} (a5);
\draw[->] (b2) to node[left] {$ $} (b3);
\draw[->] (b3) to node[right] {$ $} (b4);
\draw[->] (b4) to node[right] {$ $} (b5);
\draw[->] (b5) to node[right] {$ $} (b6);
\draw [->](a2) to node[right] {$ $} (b2);
\draw [->](a3) to node[right] {$ $}  (b3);
\draw [->](a4) to node[right] {$ $} (b4);
 \path (b4)--(c1) node[midway]{$\scriptstyle\parallel$};
\end{tikzpicture}
\end{array}
\]}
whilst applying \(\Hom_{\scrR}(M,-)\) to \eqref{exchange sequence} gives the bottom line (see \cite[A.7(3)]{W1}). The leftmost vertical maps are isomorphisms since \(\Hom_{\scrR}(\upnu_{i}M, \upnu_{i}M)\) and \(\Hom_{\scrR}(\upnu_{i}M, V)\) are projective \(   \upnu_{\kern-1pt i}\kern-2pt \Bnohat\)-modules. 
Thus 
\[ \Hom_{\scrR}(M,\upnu_{i}M)\otimes_{   \upnu_{\kern-1pt i}\kern-2pt \Bnohat}\Hom_{\scrR}(\upnu_{i}M, M)\cong \ker(\Bnohat \to \tfrac{\Bnohat}{\Bnohat e_{i} \Bnohat})= \Bnohat e_{i} \Bnohat=I_{i}\]  via the map
\[ \Hom_{\scrR}(M,\upnu_{i}M)\otimes_{   \upnu_{\kern-1pt i}\kern-2pt \Bnohat}\Hom_{\scrR}(\upnu_{i}M,M) \to  I_{i}\] sending \(a\otimes b\mapsto b\circ a.\)
Since this clearly a bimodule map, thus  bimodule isomorphism, the statement follows.
\end{proof}
\begin{lemma}
There is a functorial isomorphism 
\[
\Upphi_{\upalpha}\circ \Upphi_{i}^{2} \circ \Upphi_{\upalpha}^{-1}\cong \RHom_{\Anohat}(\uptau_{\upalpha} \otimes^{\bf{L}}_{\Bphat} I_{i}\otimes^{\bf{L}}_{\Bphat} \uptau_{\upalpha}^{*},-).
\]
\end{lemma}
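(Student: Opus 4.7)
The plan is to collapse the three-fold composition into a single $\mathrm{RHom}$ by first identifying $\Upphi_i^{2}$ with $\RHom_{\Bnohat}(I_i,-)$, and then repeatedly applying tensor-hom adjunction.

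\textbf{Step 1.} Recall that $\Upphi_i \colon \D(\Bnohat) \to \D(\upnu_{\kern-1pt i}\kern-2pt\Bnohat)$ is by definition the standard mutation equivalence $\RHom_{\Bnohat}(\Hom_\scrR(M,\upnu_i M),-)$, and that mutating a second time gives the equivalence $\RHom_{\upnu_{\kern-1pt i}\kern-2pt\Bnohat}(\Hom_\scrR(\upnu_i M, M),-)\colon \D(\upnu_{\kern-1pt i}\kern-2pt\Bnohat)\to\D(\Bnohat)$, since $\upnu_i\upnu_i M=M$. Composing and applying tensor-hom adjunction,
\[
\Upphi_i^{2}(-) \;\cong\; \RHom_{\Bnohat}\!\Bigl(\Hom_\scrR(M,\upnu_i M)\otimes^{\mathbf L}_{\upnu_{\kern-1pt i}\kern-2pt\Bnohat}\Hom_\scrR(\upnu_i M,M),\,-\Bigr).
\]
By Claim~\ref{infinite Ii}, the inner bimodule is isomorphic to $I_i$ as a $\Bnohat$-bimodule, so
\[
\Upphi_i^{2}(-)\;\cong\;\RHom_{\Bnohat}(I_i,-).
\]

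\textbf{Step 2.} Using the bimodule descriptions $\Upphi_\upalpha \cong \RHom_{\Bnohat}({}_{\Anohat}\uptau_\upalpha,-)$ and $\Upphi_\upalpha^{-1}\cong \RHom_{\Anohat}({}_{\Bnohat}\uptau_\upalpha^{*},-)$, the composition applied to $X\in\D(\Anohat)$ is
\[
\Upphi_\upalpha\circ \Upphi_i^{2}\circ \Upphi_\upalpha^{-1}(X)\;\cong\;\RHom_{\Bnohat}\!\Bigl(\uptau_\upalpha,\RHom_{\Bnohat}\!\bigl(I_i,\RHom_{\Anohat}(\uptau_\upalpha^{*},X)\bigr)\Bigr).
\]
Now apply tensor-hom adjunction twice. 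First, since $\uptau_\upalpha^{*}$ is a $\Bnohat$-$\Anohat$ bimodule and $I_i$ is $\Bnohat$-$\Bnohat$,
\[
\RHom_{\Bnohat}\bigl(I_i,\RHom_{\Anohat}(\uptau_\upalpha^{*},X)\bigr)\;\cong\;\RHom_{\Anohat}\bigl(I_i\otimes^{\mathbf L}_{\Bphat}\uptau_\upalpha^{*},X\bigr),
\]
which carries a right $\Bnohat$-action through the left $\Bnohat$-action on $I_i$. Second, pairing this left $\Bnohat$-action with the right $\Bnohat$-action on $\uptau_\upalpha$,
\[
\RHom_{\Bnohat}\!\Bigl(\uptau_\upalpha,\RHom_{\Anohat}(I_i\otimes^{\mathbf L}_{\Bphat}\uptau_\upalpha^{*},X)\Bigr)\;\cong\;\RHom_{\Anohat}\!\bigl(\uptau_\upalpha\otimes^{\mathbf L}_{\Bphat}I_i\otimes^{\mathbf L}_{\Bphat}\uptau_\upalpha^{*},X\bigr).
\]
Chaining these yields the claimed functorial isomorphism, since the tensor product $\uptau_\upalpha\otimes^{\mathbf L}_{\Bphat}I_i\otimes^{\mathbf L}_{\Bphat}\uptau_\upalpha^{*}$ is naturally an $\Anohat$-$\Anohat$ bimodule.

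\textbf{Main obstacle.} There is nothing conceptually difficult here, but the bookkeeping of left/right bimodule structures requires care, especially in confirming that the associativity of derived tensor products matches the iterated adjunction. The only genuinely nontrivial input is Step~1, and that is precisely what Claim~\ref{infinite Ii} supplies by promoting the right-module isomorphism of \cite[5.10(1)]{DW1} to a bimodule isomorphism.
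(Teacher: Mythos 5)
Your proposal is correct and follows the same route as the paper: identify $\Upphi_i^2$ with $\RHom_{\Bnohat}(I_i,-)$ via Claim~\ref{infinite Ii}, then collapse the three-fold $\RHom$ composition by tensor--hom adjunction. Your Step~1 merely spells out the intermediate identification $\Upphi_i^2\cong\RHom_{\Bnohat}\bigl(\Hom_\scrR(M,\upnu_iM)\otimes^{\mathbf L}_{\upnu_i\Bnohat}\Hom_\scrR(\upnu_iM,M),-\bigr)$ that the paper leaves implicit when citing the Claim.
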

\begin{proof}
For  the \(\Bnohat\)-bimodule \(I_{i}\) defined by the natural exact sequence \(I_{i}\to \Bnohat\to  \Bnohat_{i},\) by Claim \ref{infinite Ii}  \(\Upphi_{i} \circ \Upphi_{i}\cong \RHom_{\Bnohat}(I_{i},-).\) So,
\begin{align*}
\Upphi_{\upalpha}\circ \Upphi_{i}^{2} \circ \Upphi_{\upalpha}^{-1} &= \RHom_{\Anohat}(\uptau_{\upalpha},-) \circ \RHom_{\Bnohat}(I_{i},-)\circ \RHom_{\Anohat}(\uptau_{\upalpha}^{*},-)\\
&\cong \RHom_{\Anohat}(\uptau_{\upalpha} \otimes^{\bf{L}}_{\Bphat} I_{i}\otimes^{\bf{L}}_{\Bphat} \uptau _{\upalpha}^{*},-),
\end{align*}
as required.
\end{proof}
The point of the following result is that we can view the functor \(\TW_{\upalpha,i}\) as a lift of the complete local equivalence \(\Upphi_{\upalpha}\circ \Upphi_{i}^{2} \circ \Upphi_{\upalpha}^{-1}.\)
\begin{prop}\label{02 0f 06 march 2023} Under the Zariski local setup \eqref{01 of 23 april 2023}, the following hold.
\begin{enumerate}
\item\label{02 0f 06 march 2023 one} There exists a triangle in the derived category of $\hat{\Lambda}$-$\hat{\Lambda}$  bimodules
\begin{equation}\label{02 0f 24 jan 2023}  
\mathbb{M}\otimes^{\bf{L}}_{\Aphat}  \uptau_{\upalpha}\otimes^{\bf{L}}_{\Bphat} I_{i}\otimes^{\bf{L}}_{\Bphat} \uptau^{*}_{\upalpha} \otimes^{\bf{L}}_{\Aphat} \mathbb{M}^{*} \to {}_{\hat{\Lambda}}\hat{\Lambda}_{\hat{\Lambda}}\xrightarrow{h} \mathbb{M}\otimes^{\bf{L}}_{\Aphat}  \uptau_{\upalpha}\otimes^{\bf{L}}_{\Bphat} Z_{\upalpha,i}. 
\end{equation} 
\item\label{02 0f 06 march 2023 two}  There is a commutative diagram
\[
\begin{array}{c}
\begin{tikzpicture}
\node (a1) at (0,0) {$\D(\hat{\Lambda})$};
\node (a2) at (2,0) {$\D(\Anohat) $};
\node (a3) at (4,0) {$\D(\Bnohat)$};
\node (a4) at (6,0) {$\D(\Bnohat)$};
\node (a5) at (8,0) {$\D(\Anohat)$};
\node (a6) at (10,0) {$\D(\hat{\Lambda})$};
\node (b1) at (0,-1.5) {$\D(\Lambda)$};
\node (b2) at (10,-1.5) {$\D(\Lambda)$};
\draw[->] (a1) to node[above] {$\scriptstyle  \mathbb{F}^{-1}$} (a2);
\draw[->] (a2) to node[above] {$\scriptstyle \Upphi_{\upalpha}^{-1}$} (a3);
\draw[->] (a3) to node[above] {$\scriptstyle {\Upphi}_{i}^{2}$} (a4);
\draw[->] (a4) to node[above] {$\scriptstyle \Upphi_{\upalpha}$} (a5);
\draw[->] (a5) to node[above] {$\scriptstyle \mathbb{F}$} (a6);
\draw[->] (b1) to node[above] {$\scriptstyle\TW_{\upalpha,i}$} (b2);
\draw[<-] (b1) to node[left] {$\scriptstyle \mathrm{F}$} (a1);
\draw[<-] (b2) to node[right] {$\scriptstyle  \mathrm{F} $} (a6);
\end{tikzpicture}			
\end{array}
\]
\end{enumerate}
\end{prop}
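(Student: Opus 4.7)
For part (1), I would begin with the tautological triangle of $\Bnohat$-bimodules
\[
I_i \to \Bnohat \to \Bnohat_i \to I_i[1],
\]
and apply the derived bimodule functor $\mathbb{M}\otimes^{\bf L}_{\Aphat}\uptau_\upalpha\otimes^{\bf L}_{\Bphat}(-)\otimes^{\bf L}_{\Bphat}\uptau_\upalpha^*\otimes^{\bf L}_{\Aphat}\mathbb{M}^*$. Since $\uptau_\upalpha$ is a two-sided tilting complex with inverse $\uptau_\upalpha^*$, and $\mathbb{M}$ realises the Morita equivalence between $\Anohat$ and $\hat{\Lambda}$, the middle term collapses via $\uptau_\upalpha\otimes_{\Bphat}^{\bf L}\Bnohat\otimes_{\Bphat}^{\bf L}\uptau_\upalpha^*\cong \Anohat$ and $\mathbb{M}\otimes_{\Aphat}\Anohat\otimes_{\Aphat}\mathbb{M}^*\cong \hat{\Lambda}$, whilst the right-hand term is $\mathbb{M}\otimes \uptau_\upalpha\otimes Z_{\upalpha,i}$ by the very definition of $Z_{\upalpha,i}$. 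Tracing the connecting map against the construction of $h$ inside the proof of Proposition~\ref{01 of  06 December 2022} confirms it agrees with $h$, which establishes the triangle~(\ref{02 0f 24 jan 2023}).

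For part (2), I would reduce the commutative diagram to a natural identification of Hom spaces. By the previous lemma, the top composite is $\Theta\colonequals \RHom_{\hat{\Lambda}}(P,-)$, where $P\colonequals \mathbb{M}\otimes \uptau_\upalpha \otimes I_i \otimes \uptau_\upalpha^* \otimes \mathbb{M}^*$. For $X \in \D(\hat{\Lambda})$, applying $\RHom_{\hat{\Lambda}}(-,X)$ to~(\ref{02 0f 24 jan 2023}) and then $\mathrm{F}$ produces the functorial triangle $\mathrm{F}\RHom_{\hat{\Lambda}}(W,X)\to \mathrm{F}(X)\to \mathrm{F}\Theta(X)$, whereas applying $\RHom_\Lambda(-,\mathrm{F}(X))$ to the defining triangle of $C_{\upalpha,i}$ in the proof of Proposition~\ref{01 of  06 December 2022} yields $\RHom_\Lambda(W|_\Lambda,\mathrm{F}(X))\to \mathrm{F}(X)\to \TW_{\upalpha,i}(\mathrm{F}(X))$, where $W \colonequals \mathbb{M}\otimes \uptau_\upalpha\otimes Z_{\upalpha,i}$ and I used that $\hat{\Lambda}\otimes_{\hat{\Lambda}}W\otimes_{\hat{\Lambda}}\hat{\Lambda}=W$ as a $\Lambda$-bimodule. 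Commutativity of the diagram then reduces to constructing a natural isomorphism $\mathrm{F}\RHom_{\hat{\Lambda}}(W,X)\cong\RHom_\Lambda(W|_\Lambda,\mathrm{F}(X))$, compatible with the maps to $\mathrm{F}(X)$.

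The main obstacle is this $\Lambda$-versus-$\hat{\Lambda}$ Hom identification. My strategy is to establish that $W$ is finite length over $\hat{\Lambda}$, supported only at $\mathfrak{m}$, as both a left and a right module; this is modelled on Claim~\ref{01 0f 20th november 2022}, since $Z_{\upalpha,i}\cong \mathbb{F}\Upphi_\upalpha(\Bnohat_i)$ has already been shown to be finite length there, and the additional left tensor with $\mathbb{M}\otimes\uptau_\upalpha$ is a derived equivalence between $\Bphat$- and $\hat{\Lambda}$-module categories which preserves support at $\mathfrak{m}$. Once $W$ lies in $\Dbfl(\hat{\Lambda})$ on each variable, the functor $\RHom_\Lambda(W|_\Lambda,-)$ factors through the $\mathfrak{m}$-torsion subcategory of $\D(\Lambda)$, on which the Iyama--Reiten fully faithful embedding $\fl\hat{\Lambda}\hookrightarrow\fl\Lambda$ (used also in Lemma~\ref{04 0f 24 january 2023}) identifies $\Lambda$- and $\hat{\Lambda}$-Homs. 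The naturality of these identifications, together with the $\RHom$-triangles just constructed, ensures the comparison isomorphism is compatible with the map to $\mathrm{F}(X)$, and hence that the diagram in~(2) commutes.
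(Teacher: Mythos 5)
Your part (1) is the paper's argument verbatim: tensor $I_i\to\Bnohat\to\Bnohat_i$ on both sides by $\mathbb{M}\otimes\uptau_\upalpha\otimes(-)\otimes\uptau_\upalpha^*\otimes\mathbb{M}^*$ and collapse the middle term using $\uptau_\upalpha\otimes\uptau_\upalpha^*\cong\Anohat$ and $\mathbb{M}\otimes\mathbb{M}^*\cong\hat{\Lambda}$, noting the right term is $\mathbb{M}\otimes\uptau_\upalpha\otimes Z_{\upalpha,i}$ by definition.

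For part (2) you have located the correct heart of the matter: the comparison $\mathrm{F}\RHom_{\hat{\Lambda}}(W,X)\cong\RHom_\Lambda(W|_\Lambda,\mathrm{F}(X))$ reduces, by Hom--tensor adjunction, to the bimodule identity $W|_\Lambda\otimes^{\bf{L}}_\Lambda\hat{\Lambda}\cong W$, which is exactly Lemma~\ref{04 0f 24 january 2023}\eqref{04 0f 24 january 2023 two} once one knows $W$ (equivalently $Z_{\upalpha,i}$) lies in $\Dbfl$ — supplied by Lemma~\ref{02 of 24 january 2023}, exactly as in the paper. (You only need finite length on the right $\hat{\Lambda}$-module side; demanding it on both variables is harmless but unnecessary.) However, the way you organise the conclusion has a gap: you produce, for each $X$, two triangles with naturally isomorphic first and second vertices, and then infer that the third vertices are naturally isomorphic. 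Cones are not functorial in a triangulated category, so a pointwise-compatible isomorphism on the first two entries does not by itself yield a natural isomorphism $\mathrm{F}\circ\mathrm{G}_{\upalpha,i}\cong\TW_{\upalpha,i}\circ\mathrm{F}$ on the third; the phrase ``the naturality of these identifications\dots ensures'' is asserting what needs to be proved.

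The paper avoids this by staying at the bimodule level: both composites are $\RHom_{\hat{\Lambda}}$ out of an explicit $\Lambda$-$\hat{\Lambda}$ bimodule (namely $C\otimes^{\bf{L}}_\Lambda\hat{\Lambda}$ for $\TW_{\upalpha,i}\circ\mathrm{F}$, and ${}_\Lambda\hat{\Lambda}\otimes\mathbb{M}\otimes\uptau_\upalpha\otimes I_i\otimes\uptau_\upalpha^*\otimes\mathbb{M}^*$ for $\mathrm{F}\circ\mathrm{G}_{\upalpha,i}$), so a single isomorphism of the representing bimodules gives the natural isomorphism of functors. That bimodule isomorphism is obtained by tensoring the defining triangle of $C$ by $-\otimes^{\bf{L}}_\Lambda\hat{\Lambda}$, identifying $Z_{\upalpha,i}\otimes\hat{\Lambda}\otimes\hat{\Lambda}\cong Z_{\upalpha,i}$, and verifying by an explicit element chase (diagram~(\ref{05 0f 24 january 2023})) that the resulting square commutes before completing to triangles of bimodules; you assert the analogous compatibility but do not check it. Reworking your argument so that the single cone comparison takes place in the derived category of $\Lambda$-$\hat{\Lambda}$ bimodules closes the gap, after which everything else you wrote goes through.
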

\begin{proof}
(1) The result is implied by tensoring the exact sequence \(I_{i}\to \Bnohat\to  \Bnohat_{i},\) using the isomorphisms \(\uptau_{\upalpha} \otimes \uptau_{\upalpha}^{*} \cong {}_{\Anohat}\!\Anohat_{\Anohat} \) and \(\mathbb{M} \otimes \mathbb{M}^{*}\cong {}_{\hat{\Lambda}}\hat{\Lambda}_{\hat{\Lambda}}.\)\\
(2) Observe that  
\begin{align*}
\TW_{\upalpha,i} \circ\, \mathrm{F}&= \RHom_{\hat{\Lambda}}({}_{\Lambdaphat}C\otimes_{\Lambdaphat}^{\bf{L}}\hat{\Lambda},-),\\ 
\mathrm{F}\circ (\mathbb{F}\circ \Upphi_{\upalpha }\circ \Upphi^{2}_{i}\circ \Upphi_{\upalpha }^{-1}\circ \mathbb{F}^{-1})&=\RHom_{\hat{\Lambda}}({}_{\Lambdaphat}\hat{\Lambda}\otimes_{\hat{\Lambda}}^{\bf{L}}\mathbb{M}\otimes^{\bf{L}}_{\Aphat}  \uptau_{\upalpha}\otimes^{\bf{L}}_{\Bphat} I_{i}\otimes^{\bf{L}}_{\Bphat} \uptau^{*}_{\upalpha} \otimes^{\bf{L}}_{\Aphat} \mathbb{M}^{*},-),
\end{align*} 
thus it suffices to prove that \(C\otimes \hat{\Lambda}\cong \hat{\Lambda}\otimes \mathbb{M}\otimes \uptau_{\upalpha}\otimes I_{i}\otimes\uptau_{\upalpha}^{*}\otimes \mathbb{M}^{*} \) in the derived category of  \(\Lambda\)-\(\hat{\Lambda}\) bimodules, where to ease notation, we have written \(\otimes \) instead of \(\otimes^{\bf{L}}_{.}.\)

Tensoring   \eqref{01 of 24 january 2023} by \(\otimes^{\bf{L}}_{\Lambda}\hat{\Lambda}_{\hat{\Lambda}}\) gives  \begin{equation}\label{03 0f 01 jan 2023} 
{}_{\Lambdaphat}C\otimes^{\bf{L}}_{\Lambdaphat} \hat{\Lambda}_{\hat{\Lambda}} \to  {}_{\Lambdaphat}\Lambda\otimes^{\bf{L}}_{\Lambdaphat}\hat{\Lambda}_{\hat{\Lambda}} 
\xrightarrow{f\otimes \Id} 
{}^{\phantom{L}}_{\Lambdaphat}\hat{\Lambda}\otimes^{\bf{L}}_{\hat{\Lambda}}\mathbb{M}\otimes^{\bf{L}}_{\Aphat}  \uptau_{\upalpha}\otimes^{\bf{L}}_{\Bphat} Z_{\upalpha,i}\otimes^{\bf{L}}_{\hat{\Lambda}} \hat{\Lambda}\otimes^{\bf{L}}_{\Lambdaphat} \hat{\Lambda}_{\hat{\Lambda}} \to 
\end{equation} 
Since \(\upalpha\) is an atom, by Lemma \ref{02 of 24 january 2023} \( \Upphi_{\upalpha}^{-1}(\Bnohat_{i})\) is in one degree only, and by the proof  it has finite length. Since \(\mathbb{F}\) is  induced from a Morita equivalence, \(\Upphi^{-1}\mathbb{F}^{-1}(\Bnohat_{i})\) is also one degree only, and has  finite length.

Now,  \( \Upphi^{-1}_{\upalpha}\mathbb{F}^{-1}(\Bnohat_{i})=\Bnohat_{i}\otimes^{\bf{L}}_{\Bphat}\, \uptau_{\upalpha}^{*}\otimes^{\bf{L}}_{\Aphat} \mathbb{M}^{*}\) as one sided modules, which implies that \(\Bnohat_{i}\otimes^{\bf{L}}_{\Bphat}\, \uptau_{\upalpha}^{*}\otimes^{\bf{L}}_{\Aphat} \mathbb{M}^{*}\) is one degree only and in that degree the cohomology is finite dimensional. 

Now,  the natural  map \(\uppsi ~\colon Z_{\upalpha,i}\otimes_{\hat{\Lambda}}(\hat{\Lambda}\otimes_{\Lambdaphat} \hat{\Lambda}) \to {}_{\Bphat}{Z_{\upalpha,i}}_{\hat{\Lambda}}\) sending \(z\otimes r\otimes s\mapsto zrs\) is a bijection by Lemma \ref{04 0f 24 january 2023}\eqref{04 0f 24 january 2023 two}, so  since  this is clearly a bimodule homomorphism, it follows that  \(Z_{\upalpha,i}\otimes \hat{\Lambda} \otimes  \hat{\Lambda} \cong  Z_{\upalpha,i}\) as \(\Bnohat\)-$\hat{\Lambda}$ bimodules.

Now, the following diagram commutes
\begin{equation}\label{05 0f 24 january 2023}
\begin{array}{c}
\begin{tikzpicture}
\node (a2) at (5,0) {${}_{\Lambdaphat}\Lambda \otimes_{\Lambdaphat} \hat{\Lambda}_{\hat{\Lambda}}$};
\node (a3) at (10,0) {${}_{\Lambdaphat}\hat{\Lambda}\otimes\mathbb{M}\otimes \uptau_{\upalpha} \otimes Z_{\upalpha,i}\otimes \hat{\Lambda}\otimes  \hat{\Lambda}_{\hat{\Lambda}}$};
\node(c1) at (5,-0.75){${}_{\Lambdaphat}\hat{\Lambda}_{\hat{\Lambda}}$};
\node (b2) at (5,-1.5) {${}_{\Lambdaphat}\hat{\Lambda} \otimes_{\hat{\Lambda}} \hat{\Lambda}_{\hat{\Lambda}}$};
\node (b3) at (10,-1.5) {${}_{\Lambdaphat}\hat{\Lambda}\otimes \mathbb{M}\otimes  \uptau_{\upalpha}\otimes {Z_{ \upalpha,i}}_{\hat{\Lambda}}$};
\draw[->] (a2) to node[above] {$\scriptstyle f\otimes \Id$} (a3);
\draw[->] (a3) to node[right] {$\scriptstyle\Id \otimes \uppsi$} (b3);
\draw[->] (b2) to node[above] {$\scriptstyle\Id\otimes h$} (b3);
\path (a2)--(c1) node[midway]{$\scriptstyle\parallel$};
\path (c1)--(b2) node[midway]{$\scriptstyle\parallel$};
\end{tikzpicture}
\end{array}
\end{equation}
since
\[
\begin{array}{c}
\begin{tikzpicture}
\node (a1) at (0,0) {${}_{\Lambdaphat}\Lambda \otimes_{\Lambda} \hat{\Lambda}_{\hat{\Lambda}}$};
\node (a2) at (5,0) {${}_{\Lambdaphat}\hat{\Lambda}\otimes\mathbb{M}\otimes \uptau_{\upalpha} \otimes Z \otimes \hat{\Lambda}\otimes  \hat{\Lambda}_{\hat{\Lambda}}$};
\node(c1) at (0,-1){${}_{\Lambdaphat}\hat{\Lambda}_{\hat{\Lambda}}$};
\node (b1) at (0,-2) {${}_{\Lambdaphat}\hat{\Lambda} \otimes_{\hat{\Lambda}} \hat{\Lambda}_{\hat{\Lambda}}$};
\node (b2) at (5,-2) {${}_{\Lambdaphat}\hat{\Lambda}\otimes \mathbb{M}\otimes  \uptau_{\upalpha}\otimes Z$};
\node(a3) at (-1.3,0){$\scriptstyle 1\otimes r$};
\node(c2) at (-1.3,-1){$\scriptstyle r$};
\node(b3) at (-1.3,-2){$\scriptstyle r \otimes 1$};
\node(d1) at (-1.3,-2.3){};
\node(d2) at (5,-2.5){$\scriptstyle r\otimes h(1)$};
\node(e1) at (-1.3,0.3){};
\node(e2) at (5,0.5){$\scriptstyle 1\otimes h(1)\otimes 1\otimes r$};
\draw[->] (a1) to node[below] {$\scriptstyle f\otimes \Id$} (a2);
\draw[->] (a2) to node[right] {$\scriptstyle\Id \otimes \uppsi$} (b2);
\draw[->] (b1) to node[above] {$\scriptstyle\Id\otimes h$} (b2);
\draw[->] (c1) to node[above] {} (a1);
\draw[->] (c1) to node[above] {} (b1);
\draw [| ->] (c2) edge (a3);
\draw [| ->] (c2) edge (b3);
\draw [line width = 1 pt,| ->, bend right=10] (d1) edge (d2);
\draw [line width = 1 pt,| ->, bend left=10] (e1) edge (e2);
\end{tikzpicture}
\end{array}
\]
and  \(1\otimes h(1)r=1\otimes h(r)=1\otimes rh(1)=r\otimes h(1).\)  Completing the top and bottom lines in \eqref{05 0f 24 january 2023} to triangles, using \eqref{02 0f 24 jan 2023} 
{\scriptsize \begin{equation*}\label{07 0f 24 january 2023}
\begin{array}{c}
\begin{tikzpicture}[scale=0.9]
\node (a1) at (0,0) {${}_{\Lambdaphat}C\otimes \hat{\Lambda}_{\hat{\Lambda}}$};
\node (a2) at (4,0) {${}_{\Lambdaphat}\Lambda \otimes_{\Lambda} \hat{\Lambda}_{\hat{\Lambda}}$};
\node (a3) at (9,0) {${}_{\Lambdaphat}\hat{\Lambda}\otimes\mathbb{M}\otimes \uptau_{\upalpha} \otimes \Bnohat_{i}\otimes \uptau^{*}_{\upalpha} \otimes \mathbb{M}^{*}\otimes \hat{\Lambda}\otimes  \hat{\Lambda}_{\hat{\Lambda}}$};
\node(c1) at (4,-0.75){${}_{\Lambdaphat}\hat{\Lambda}_{\hat{\Lambda}}$};
\node (b1) at (0,-1.5) {${}_{\Lambdaphat}\hat{\Lambda} \otimes \mathbb{M}\otimes \uptau_{\upalpha}\otimes I_{i}\otimes \uptau^{*}_{\upalpha} \otimes\mathbb{M}^{*}_{\hat{\Lambda}}  $};
\node (b2) at (4,-1.5) {${}_{\Lambdaphat}\hat{\Lambda} \otimes_{\hat{\Lambda}} \hat{\Lambda}_{\hat{\Lambda}}$};
\node (b3) at (9,-1.5) {${}_{\Lambdaphat}\hat{\Lambda}\otimes \mathbb{M}\otimes  \uptau_{\upalpha}\otimes \Bnohat_{i}\otimes \uptau^{*}_{\upalpha} \otimes \mathbb{M}^{*}_{\hat{\Lambda}}$};
\draw[->] (a1) to node[above] {} (a2);
\draw[->] (a2) to node[above] {$f\otimes \Id$} (a3);
\draw[->] (a3) to node[right] {$\Id \otimes \uppsi$} (b3);
\draw [ dashed,->] (a1) edge node[right]{$\exists$} (b1);
\draw[->] (b1) to node[above] {} (b2);
\draw[->] (b2) to node[above] {$\Id\otimes h$} (b3);
\path (a2)--(c1) node[midway]{$\parallel$};
\path (c1)--(b2) node[midway]{$\parallel$};
\end{tikzpicture}
\end{array}
\end{equation*}}
Since the rightmost vertical maps are isomorphisms, it follows that so is the leftmost, and thus  \({}_{\Lambdaphat}C\otimes \hat{\Lambda}_{\hat{\Lambda}} \cong {}_{\Lambdaphat}\hat{\Lambda} \otimes \mathbb{M}\otimes \uptau_{\upalpha}\otimes I_{i}\otimes \uptau^{*}_{\upalpha} \otimes\mathbb{M}^{*}_{\hat{\Lambda}} \), as required.
\end{proof}

\begin{cor}\label{01 0f 02 march 2023}
The following diagram commutes{\scriptsize
	\[
\begin{array}{c}
\begin{tikzpicture}
\node (a1) at (0,0) {$\D(\hat{\Lambda})$};
\node (a2) at (2,0) {$\D(\Anohat) $};
\node (a3) at (4,0) {$\D(\Bnohat)$};
\node (a4) at (6,0) {$\D(\Bnohat)$};
\node (a5) at (8,0) {$\D(\Anohat)$};
\node (a6) at (10,0) {$\D(\hat{\Lambda})$};
\node (b1) at (0,-1.5) {$\D(\Lambda)$};
\node (b2) at (10,-1.5) {$\D(\Lambda)$};
\draw[->] (a1) to node[above] {$\scriptstyle  \mathbb{F}^{-1}$} (a2);
\draw[->] (a2) to node[above] {$\scriptstyle \Upphi_{\upalpha}^{-1}$} (a3);
\draw[->] (a3) to node[above] {$\scriptstyle {\Upphi}_{i}^{-2}$} (a4);
\draw[->] (a4) to node[above] {$\scriptstyle \Upphi_{\upalpha}$} (a5);
\draw[->] (a5) to node[above] {$\scriptstyle \mathbb{F}$} (a6);
\draw[->] (b1) to node[above] {$\TW_{\upalpha,i}^{*}$} (b2);
\draw[->] (b1) to node[left] {$\scriptstyle \mathrm{F}^{\LA}$} (a1);
\draw[->] (b2) to node[right] {$\scriptstyle \mathrm{F}^{\LA} $} (a6);
\end{tikzpicture}			
\end{array}
\] }
\end{cor}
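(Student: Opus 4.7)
The strategy is to deduce Corollary \ref{01 0f 02 march 2023} directly from Proposition \ref{02 0f 06 march 2023}\eqref{02 0f 06 march 2023 two} by applying the left adjoint to every functor in that commutative square. Proposition \ref{02 0f 06 march 2023}\eqref{02 0f 06 march 2023 two} provides a natural isomorphism
\[
\TW_{\upalpha,i} \circ \mathrm{F} \;\cong\; \mathrm{F} \circ \bigl(\mathbb{F} \circ \Upphi_{\upalpha} \circ \Upphi_i^{2} \circ \Upphi_{\upalpha}^{-1} \circ \mathbb{F}^{-1}\bigr),
\]
and since passing to left adjoints is a well-defined 2-functorial operation (with $(G\circ H)^{\LA}=H^{\LA}\circ G^{\LA}$), this isomorphism transfers to an isomorphism between the left adjoints of its two sides.

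\textbf{Identifying the left adjoints.} The left adjoint of the right-hand side is $\mathrm{F}^{\LA}\circ\TW_{\upalpha,i}^{\LA}$, and by Remark \ref{01 of 21 feb 2023} we have $\TW_{\upalpha,i}^{\LA}\cong\TW_{\upalpha,i}^{*}$. For the top composition, the Morita equivalence $\mathbb{F}$ and the mutation composition $\Upphi_{\upalpha}$ are equivalences of triangulated categories, so their left adjoints agree with their inverses. For the middle factor, the bimodule description $\Upphi_i^{2}\cong\RHom_{\Bnohat}(I_i,-)$ obtained from Claim \ref{infinite Ii} identifies $\Upphi_i^{2}$ as an autoequivalence of $\D(\Bnohat)$ (reflecting the involutivity of mutation at a single summand in the cDV setting), so $(\Upphi_i^{2})^{\LA}\cong\Upphi_i^{-2}$. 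Reversing the order of composition and replacing each factor with its left adjoint therefore yields
\[
\bigl(\mathbb{F}\circ\Upphi_{\upalpha}\circ\Upphi_i^{2}\circ\Upphi_{\upalpha}^{-1}\circ\mathbb{F}^{-1}\bigr)^{\LA}\circ\mathrm{F}^{\LA}\cong\mathbb{F}\circ\Upphi_{\upalpha}\circ\Upphi_i^{-2}\circ\Upphi_{\upalpha}^{-1}\circ\mathbb{F}^{-1}\circ\mathrm{F}^{\LA},
\]
and equating this with $\mathrm{F}^{\LA}\circ\TW_{\upalpha,i}^{*}$ is exactly the commutativity claimed in the corollary.

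\textbf{Main obstacle.} There is no substantive obstacle: the entire argument is a formal consequence of uniqueness of adjoints together with Proposition \ref{02 0f 06 march 2023} and Remark \ref{01 of 21 feb 2023}. The only point requiring any care is to justify that $\Upphi_i^{2}$ is genuinely an autoequivalence of $\D(\Bnohat)$, so that its left adjoint is literally $\Upphi_i^{-2}$ rather than merely some formal adjoint; this was already implicitly established by Claim \ref{infinite Ii} and the preceding discussion, and deserves a one-line remark but no real computation.
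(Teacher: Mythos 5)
Your proof is correct and is essentially identical to the paper's one-line argument: take left adjoints of the commutative square in Proposition \ref{02 0f 06 march 2023}\eqref{02 0f 06 march 2023 two}, use Remark \ref{01 of 21 feb 2023} to identify $\TW_{\upalpha,i}^{\LA}$ with $\TW^{*}_{\upalpha,i}$, and observe that the left adjoints of the top equivalences are their inverses. One small exposition slip: after displaying the isomorphism $\TW_{\upalpha,i}\circ\mathrm{F}\cong\mathrm{F}\circ(\cdots)$ you say ``the left adjoint of the right-hand side is $\mathrm{F}^{\LA}\circ\TW_{\upalpha,i}^{\LA}$'' when you mean the left-hand side; and the phrase ``involutivity of mutation'' is not really what is at stake—$(\Upphi_i^{2})^{\LA}=\Upphi_i^{-2}$ simply because $\Upphi_i$ is an equivalence, not because mutation at $i$ is an involution—but neither affects the correctness of the argument.
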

\begin{proof}
Take \(\LA\) of \ref{02 0f 06 march 2023}\eqref{02 0f 06 march 2023 two}, using  Remark \ref{01 of 21 feb 2023} to see that \(\TW_{\upalpha,i}^{*}\) is left adjoint to \(\TW_{\upalpha,i}.\)
\end{proof}
To set notation, given \((\upalpha, i)\) in the finite arrangement \(\scrH,\) say \(\upalpha \colon C_{+} \to D.\) Write \(\hat{\Lambda}_{\con}\) for the contraction algebra in chamber \(C_{+}\) and  \(\hat{\Gamma}_{\con}\) for the contraction algebra in chamber \(D.\) Mimicking the introduction and Proposition \ref{02 0f 06 march 2023}, consider the composition {\scriptsize
\[ \mathrm{J}_{\upalpha,i}\colonequals \Db(\hat{\Lambda}_{\con})\xrightarrow[\sim]{F_{\upalpha}} \Db(\hat{\Gamma}_{\con}) \xrightarrow[\sim]{F_{i}^{2}} \Db(\hat{\Gamma}_{\con})   \xrightarrow[\sim]{F_{\upalpha}^{-1}} \Db(\hat{\Lambda}_{\con}), \] }  where \(F_{i}\) and \(F_{\upalpha}\) are the standard equivalences in \cite{A}.

The following lifts results from \cite{A} to the Zariski local setting.
\begin{cor}\label{jenny and caro diag }
For any choice of \((\upalpha,i)\) in the finite arrangement \(\scrH, \) the following diagram commutes
\[
\begin{tikzpicture}
\node (a1) at (0,0) {$\D(\hat{\Lambda}_{\mathrm{con}})$};
\node (a2) at (3,0) {$\D(\hat{\Lambda}_{\mathrm{con}})$};
\node (c1) at (0,-1.5) {$\D(\Lambda)$};
\node (c2) at (3,-1.5) {$\D(\Lambda)$};
\draw[->] (a1) to node[above] {$\scriptstyle  \mathrm{J}_{\upalpha,i}$} (a2);
\draw[->] (c1) to node[above] {$\scriptstyle \TW_{\upalpha,i}$} (c2);
\draw[->] (a1) to node[right] {$ $} (c1);
\draw[->] (a2) to node[left] {$ $} (c2);							
\end{tikzpicture}
\] 
\end{cor}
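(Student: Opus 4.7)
The strategy is to factor the vertical downward arrows through $\D(\hat{\Lambda})$. The natural functor $\D(\hat{\Lambda}_{\con}) \to \D(\Lambda)$ is restriction of scalars along $\Lambda \to \hat{\Lambda} \twoheadrightarrow \hat{\Lambda}_{\con}$, so it factors as $\D(\hat{\Lambda}_{\con}) \to \D(\hat{\Lambda}) \xrightarrow{\mathrm{F}} \D(\Lambda)$. I would therefore insert an intermediate row $\D(\hat{\Lambda}) \to \D(\hat{\Lambda})$ into the target square, with top arrow $\mathbb{F}\circ \Upphi_{\upalpha}\circ \Upphi_{i}^{2}\circ \Upphi_{\upalpha}^{-1}\circ \mathbb{F}^{-1}$, splitting it into two stacked sub-squares to be verified separately.

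The lower sub-square, relating this composite to $\TW_{\upalpha,i}$ via $\mathrm{F}$, commutes precisely by Proposition~\ref{02 0f 06 march 2023}\eqref{02 0f 06 march 2023 two}. Thus the whole problem reduces to the upper sub-square: namely, showing that $\mathrm{J}_{\upalpha,i}$ on $\D(\hat{\Lambda}_{\con})$ is intertwined with $\mathbb{F}\circ \Upphi_{\upalpha}\circ \Upphi_{i}^{2}\circ \Upphi_{\upalpha}^{-1}\circ \mathbb{F}^{-1}$ on $\D(\hat{\Lambda})$ through the quotient $\hat{\Lambda} \twoheadrightarrow \hat{\Lambda}_{\con}$ (and likewise $\hat{\Gamma} \twoheadrightarrow \hat{\Gamma}_{\con}$). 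This is precisely the content of August's construction in \cite{A}: the standard equivalences $F_{\upalpha}$ and $F_{i}$ are defined so as to intertwine with the mutation functors $\Upphi_{\upalpha}$ and $\Upphi_{i}$ under passage to the contraction algebra. Composing along $\upalpha$, twice at $i$, and back along $\upalpha^{-1}$ yields the desired intertwining of $\mathrm{J}_{\upalpha,i}$ with $\Upphi_{\upalpha}\circ \Upphi_{i}^{2}\circ \Upphi_{\upalpha}^{-1}$. The Morita equivalence $\mathbb{F}$ is invisible on contraction algebras, since Morita equivalent algebras have isomorphic contraction algebras, so it passes through as well.

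The principal obstacle will be one of conventions: verifying that the specific variants of $\Upphi_{\upalpha}$ and $\Upphi_{i}$ used in Setup~\ref{bi setup} to build $\TW_{\upalpha,i}$ agree (in direction and dualisation) with those August uses to construct $F_{\upalpha}$ and $F_{i}$. Once this single-wall compatibility is pinned down, the statement for the composite $\mathrm{J}_{\upalpha,i}$ is formal, and the corollary drops out.
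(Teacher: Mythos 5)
Your proposal is correct and takes essentially the same approach as the paper: the paper's proof is precisely the two-level diagram you describe, with the bottom rectangle being Proposition~\ref{02 0f 06 march 2023}\eqref{02 0f 06 march 2023 two} and the top rectangle cited to \cite{A}. The only cosmetic difference is that the paper's picture routes the upper restriction arrows through $\D(\Anohat)$ rather than $\D(\hat{\Lambda})$, but since $\mathbb{F}$ is a Morita equivalence and contraction algebras are Morita-invariant this is the same argument, and you explicitly flag that point.
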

\begin{proof}
The result follows from the following
\[
\begin{array}{c}
	\begin{tikzpicture}
	\node (d1) at (2,1.5) {$\D(\hat{\Lambda}_{\con})$};
		\node (d2) at (8,1.5) {$\D(\hat{\Lambda}_{\con})$};
		\node (a1) at (0,0) {$\D(\hat{\Lambda})$};
		\node (a2) at (2,0) {$\D(\Anohat) $};
		\node (a3) at (4,0) {$\D(\Bnohat)$};
		\node (a4) at (6,0) {$\D(\Bnohat)$};
		\node (a5) at (8,0) {$\D(\Anohat)$};
		\node (a6) at (10,0) {$\D(\hat{\Lambda})$};
		\node (b1) at (0,-1.5) {$\D(\Lambda)$};
		\node (b2) at (10,-1.5) {$\D(\Lambda)$};
		\draw[->] (a1) to node[above] {$\scriptstyle  \mathbb{F}^{-1}$} (a2);
		\draw[->] (a2) to node[above] {$\scriptstyle \Upphi_{\upalpha}^{-1}$} (a3);
		\draw[->] (a3) to node[above] {$\scriptstyle {\Upphi}_{i}^{2}$} (a4);
		\draw[->] (a4) to node[above] {$\scriptstyle \Upphi_{\upalpha}$} (a5);
		\draw[->] (a5) to node[above] {$\scriptstyle \mathbb{F}$} (a6);
		\draw[->] (b1) to node[above] {$\scriptstyle\TW_{\upalpha,i}$} (b2);
		\draw[<-] (b1) to node[left] {$\scriptstyle \mathrm{F}$} (a1);
		\draw[<-] (b2) to node[right] {$\scriptstyle  \mathrm{F} $} (a6);
		\draw[->] (d1) to node[above] {$\scriptstyle  \mathrm{J}_{\upalpha,i}$} (d2);
		\draw[->] (d1) to node  {$ $} (a2);
		\draw[->] (d2) to node[left] {$ $} (a5);	
	\draw[rounded corners,->] (1.7,1.2) -- (1.7,0.4) -- (-0.5,0.4) -- (-0.5,-1.2);
	\draw[rounded corners,->] (8.3,1.2) -- (8.3,0.4) -- (10.5,0.4) -- (10.5,-1.2);
	\end{tikzpicture}			
\end{array}
\]
The  top diagram commutes by \cite{A} and the bottom diagram is  Proposition \ref{02 0f 06 march 2023}\eqref{02 0f 06 march 2023 two}.
\end{proof}
\section{One and Two sided tilting}
In this section, we will prove  that for any $(\upalpha, i),$ the functor \(\TW_{\upalpha,i} \colon \Db(\Lambda)\to \Db(\Lambda)\) is an equivalence.
\subsection{Tilting Generalities}  
Throughout this subsection, \(S\) is a commutative Noetherian ring, and \(\Gamma\) is a module-finite \(S\)-algebra.
\begin{lemma}\label{03 0f 06 march 2023}
If \( M\in \Mod S\), then \(M=0\) if and only if \(M_{\mathfrak{p}} \otimes_{S_{\mathfrak{p}}}\hat{S}_{\mathfrak{p}}=0\) for all \(\mathfrak{p} \in \Spec S.\)
\end{lemma}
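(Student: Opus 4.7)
The plan is to prove each direction separately, reducing the statement to two standard facts: that completion at a prime is faithfully flat on the localisation, and that a module is zero if and only if all of its localisations at primes are zero.

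The forward direction is immediate: if $M=0$, then localising and tensoring with $\hat{S}_{\mathfrak{p}}$ yields $0$ for every $\mathfrak{p}\in\Spec S$. For the converse, assume $M_{\mathfrak{p}}\otimes_{S_{\mathfrak{p}}}\hat{S}_{\mathfrak{p}}=0$ for all $\mathfrak{p}\in\Spec S$. First I would fix a prime $\mathfrak{p}$ and observe that since $S$ is commutative Noetherian, $S_{\mathfrak{p}}$ is a Noetherian local ring with maximal ideal $\mathfrak{p} S_{\mathfrak{p}}$, so its $\mathfrak{p} S_{\mathfrak{p}}$-adic completion $\hat{S}_{\mathfrak{p}}$ is faithfully flat over $S_{\mathfrak{p}}$ by a standard result in commutative algebra. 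Faithful flatness then forces $M_{\mathfrak{p}}=0$.

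Having shown $M_{\mathfrak{p}}=0$ for every $\mathfrak{p}\in\Spec S$, I would conclude $M=0$ by the usual local-to-global principle: for any $m\in M$ the annihilator $\operatorname{Ann}_{S}(m)$ cannot be contained in any prime ideal, since otherwise localising at such a prime would give a nonzero element in $M_{\mathfrak{p}}$; hence $\operatorname{Ann}_{S}(m)=S$, so $m=0$.

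The argument is essentially routine, and there is no real obstacle: the only substantive input is the faithful flatness of $S_{\mathfrak{p}}\to\hat{S}_{\mathfrak{p}}$, which is standard for Noetherian local rings. The proof should be only a few lines.
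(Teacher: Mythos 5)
Your proposal is correct and uses the same two ingredients as the paper's proof: faithful flatness of the completion $S_{\mathfrak{p}}\to\hat{S}_{\mathfrak{p}}$ (the paper cites \cite[Theorem 7.2]{M}) and the local-to-global principle that $M=0$ iff $M_{\mathfrak{p}}=0$ for all primes (the paper cites \cite[Proposition 3.8]{AM}, which you reprove in a line). The argument is essentially identical.
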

\begin{proof}
By \cite[Proposition 3.8 ]{AM} \(M=0\) iff \(M_{\mathfrak{p}}=0\) for all \(\mathfrak{p} \in \Spec S.\) By faithful flatness \cite[Theorem 7.2]{M}, this is to equivalent to  \( M_{\mathfrak{p}} \otimes_{S_{\mathfrak{p}}}\hat{S}_{\mathfrak{p}}=0\) for all \(\mathfrak{p} \in \Spec S.\)
\end{proof}

\begin{lemma}\label{01 of 13 march 2023}
 If  \(T\in\Kb  (\smallproj \Gamma),\) then there are isomorphisms
\begin{align*}
\Hom_{\D(\Gamma)}(T_{\Gamma}, T_{\Gamma})\otimes_{S} S_{\mathfrak{p}} &\cong \Hom_{\D(\Gamma_{\mathfrak{p}})}(T_{\mathfrak{p}}, T_{\mathfrak{p}}) \\
\Hom_{\D(\Gamma_{\mathfrak{p}})}(T_{\mathfrak{p}}, T_{\mathfrak{p}})\otimes_{S_{\mathfrak{p}}} \hat{S} &\cong  \Hom_{\D(\hat{\Gamma})}(T_{\mathfrak{p}} \otimes_{S_{\mathfrak{p}}} \hat{S},   T_{\mathfrak{p}}\otimes_{S_{\mathfrak{p}}} \hat{S}) 
\end{align*}
\end{lemma}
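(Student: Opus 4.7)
The plan is to establish both isomorphisms simultaneously by reducing them to the same elementary statement, using the fact that $T \in \Kb(\smallproj \Gamma)$ permits all Hom computations at the level of complexes rather than derived functors.

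First I would observe that since $T$ is a bounded complex of finitely generated projective $\Gamma$-modules, there is no need to replace $T$ by a resolution: the derived Hom equals the Hom-complex, i.e.\
\[
\Hom_{\D(\Gamma)}(T_\Gamma,T_\Gamma) \cong H^0\bigl(\Hom^{\bullet}_{\Gamma}(T,T)\bigr),
\]
and analogous formulas hold after localizing at $\mathfrak{p}$ or completing. Each term $\Hom^n_\Gamma(T,T)=\prod_i \Hom_\Gamma(T^i, T^{i+n})$ is a finite product of $S$-modules of the form $\Hom_\Gamma(P,Q)$ with $P,Q$ finitely generated projective. Since $(-)\otimes_S S_\mathfrak{p}$ and $(-)\otimes_{S_\mathfrak{p}}\hat{S}$ are flat (hence exact), they commute with the kernel/cokernel computation of $H^0$, so it suffices to prove the termwise isomorphisms
\[
\Hom_\Gamma(P,Q)\otimes_S S_\mathfrak{p}\cong \Hom_{\Gamma_\mathfrak{p}}(P_\mathfrak{p},Q_\mathfrak{p}),\qquad \Hom_{\Gamma_\mathfrak{p}}(P_\mathfrak{p},Q_\mathfrak{p})\otimes_{S_\mathfrak{p}}\hat{S}\cong \Hom_{\hat{\Gamma}}(\widehat{P_\mathfrak{p}},\widehat{Q_\mathfrak{p}})
\]
for $P,Q$ finitely generated projective $\Gamma$-modules.

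Next I would reduce to the case $P = \Gamma$. Writing $P$ as a direct summand of some free $\Gamma^n$, additivity of both sides in $P$ reduces each claim to $P=\Gamma$, in which case the left-hand side becomes $Q\otimes_S S_\mathfrak{p}\cong Q_\mathfrak{p}$ and $Q_\mathfrak{p}\otimes_{S_\mathfrak{p}}\hat{S}\cong \widehat{Q_\mathfrak{p}}$, both of which are the definitions of localization and completion. (For the completion step one uses that since $\Gamma$ is module-finite over $S$, the fg projective $P$ is fg as an $S$-module, so the usual flat base-change formula applies; this is where the module-finite hypothesis is used.)

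The argument is entirely formal once one notes that $T$ being in $\Kb(\smallproj\Gamma)$ allows one to avoid any projective resolution issues, so there is no real obstacle; the only point requiring attention is that the termwise finite generation lets the flat functors $(-)\otimes_S S_\mathfrak{p}$ and $(-)\otimes_{S_\mathfrak{p}}\hat{S}$ pass through $\Hom_\Gamma(P,-)$, which is where the hypothesis that $\Gamma$ is module-finite over $S$ enters.
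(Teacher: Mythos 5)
Your argument is correct, and it is essentially the standard argument that the paper delegates to the cited reference: the paper's entire proof reads ``This is Iyama--Reiten, proof of (3) and (4) in [IR, Theorem 3.1],'' whereas you have written out the content of that citation. The ingredients match: $T\in\Kb(\smallproj\Gamma)$ lets you compute $\Hom_{\D(\Gamma)}$ as $H^0$ of the genuine $\Hom$-complex, flatness of $S\to S_\mathfrak{p}$ and $S_\mathfrak{p}\to\hat{S}_\mathfrak{p}$ lets those functors pass through $H^0$ and through the finite products, and additivity plus reduction to $P=\Gamma$ finishes it. One small slip worth fixing: in your parenthetical about the completion step you say ``the fg projective $P$ is fg as an $S$-module,'' but after reducing to $P=\Gamma$ the module whose finite generation over $S$ is actually being used is $Q$ (so that $Q_\mathfrak{p}\otimes_{S_\mathfrak{p}}\hat{S}_\mathfrak{p}\cong\widehat{Q_\mathfrak{p}}$); the conclusion is unaffected since $Q$ is also fg over $S$ by module-finiteness of $\Gamma$, but the reference to $P$ there is slightly misplaced.
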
	
\begin{proof}
This is Iyama-Reiten, proof of (3) and (4) in \cite[Theorem 3.1]{IR}.
\end{proof}

\begin{definition}\label{ 01 0f 21 march 2023}
A one-sided tilting complex for \(\Gamma\) is an object \(T\in \Kb (\smallproj \Gamma)\) such that 
\begin{enumerate}
\item \(\Hom(T, T[t])=0\) for all \(t \neq 0\)
\item If \(x\in \D(\Mod \Gamma)\) with \(\RHom_{\Gamma}(T,x)=0\), then \(x\cong 0.\)
\end{enumerate}
\end{definition}
 By \cite[p.80]{K} Definition \ref{ 01 0f 21 march 2023} is equivalent to the other definitions in the literature.
 
 \begin{lemma}\label{01 of 16 march 2023}
 Given \(T\in \Kb(\smallproj \Gamma),\) the following are equivalent.
 \begin{enumerate}
\item \(T\) is one-sided  tilting complex
\item\(T\otimes_{S} S_{\mathfrak{p}}\) is one-sided tilting \(\Gamma_{\mathfrak{p}}\)-complex for all \(\mathfrak{p} \in \Spec S\)
\item  \(T\otimes_{S} S_{\mathfrak{p}} \otimes \hat{S}\) is one-sided tilting \(\hat{\Gamma}_{\mathfrak{p}}\)-complex for all \(\mathfrak{p} \in \Spec S\)
 \end{enumerate}
 \end{lemma}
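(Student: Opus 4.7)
The plan is to verify the two bullets of Definition \ref{ 01 0f 21 march 2023} separately, since they behave very differently under localisation and completion. The first bullet, Ext-vanishing, is essentially formal: by Lemma \ref{01 of 13 march 2023}, for every $t$ one has
\[
\Hom_{\D(\Gamma)}(T,T[t])\otimes_S S_\p\otimes_{S_\p}\hat{S}_\p\;\cong\;\Hom_{\D(\hat{\Gamma}_\p)}(\hat{T}_\p,\hat{T}_\p[t]),
\]
so Lemma \ref{03 0f 06 march 2023} immediately gives the equivalence of the Ext-vanishing conditions in (1), (2) and (3). The point is just that $T\in\Kb(\smallproj\Gamma)$ is perfect, so Hom commutes with flat base change.

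For the second bullet (generation), I would prove the implications in a cycle. For $(1)\Rightarrow (2)$: given $y\in\D(\Mod \Gamma_\p)$ with $\RHom_{\Gamma_\p}(T_\p,y)=0$, restrict scalars along $\Gamma\to\Gamma_\p$. Because $y$ is already $S_\p$-local and $T$ is perfect,
\[
\RHom_\Gamma(T,y)\;\cong\;\RHom_\Gamma(T,y)\otimes_S S_\p\;\cong\;\RHom_{\Gamma_\p}(T_\p,y)\;=\;0,
\]
so $y=0$ by (1). The implication $(2)\Rightarrow (3)$ is the same argument, replacing $S\to S_\p$ by the flat extension $S_\p\to\hat{S}_\p$; here one uses that completion is flat for Noetherian local rings and that perfectness of $T_\p$ is preserved.

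The harder direction is $(3)\Rightarrow (1)$, since now we must deduce global generation from generation after localisation and completion at every prime. Given $x\in\D(\Mod\Gamma)$ with $\RHom_\Gamma(T,x)=0$, the strategy is to show each cohomology $H^i(x)$ vanishes via Lemma \ref{03 0f 06 march 2023}. Flatness of $S\to S_\p\to\hat{S}_\p$ yields
\[
\RHom_\Gamma(T,x)\otimes_S S_\p\otimes_{S_\p}\hat{S}_\p\;\cong\;\RHom_{\hat{\Gamma}_\p}(\hat{T}_\p,x\otimes_S\hat{S}_\p)\;=\;0,
\]
so by (3) applied at $\p$ we get $x\otimes_S\hat{S}_\p=0$ in $\D(\hat{\Gamma}_\p)$. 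Taking cohomology, $H^i(x)_\p\otimes_{S_\p}\hat{S}_\p=0$ for every $\p$, and Lemma \ref{03 0f 06 march 2023} then forces $H^i(x)=0$ for all $i$, so $x=0$.

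The main obstacle is to make the base-change isomorphism $\RHom_\Gamma(T,x)\otimes_S\hat{S}_\p\cong\RHom_{\hat{\Gamma}_\p}(\hat{T}_\p,\hat{x}_\p)$ fully rigorous for arbitrary $x\in\D(\Mod\Gamma)$; this is where perfectness of $T$ is genuinely used, and care is needed because $x$ need not be bounded or finitely generated. Everything else is a routine combination of Lemma \ref{03 0f 06 march 2023} and Lemma \ref{01 of 13 march 2023}.
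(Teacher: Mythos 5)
Your argument is correct and follows essentially the same strategy as the paper: a cyclic chain of implications using flat base change of $\RHom$ against the perfect complex $T$, the derived adjunction with restriction of scalars for the forward steps, and Lemma \ref{03 0f 06 march 2023} together with Lemma \ref{01 of 13 march 2023} for $(3)\Rightarrow(1)$. The only cosmetic differences are that the paper delegates $(1)\Rightarrow(2)$ to a citation of Iyama--Reiten whereas you give the short direct argument, and that you collect the Ext-vanishing checks into a single step rather than carrying them through each implication.
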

 \begin{proof}
(1)\(\Rightarrow\)(2). This holds by e.g  \cite[Lemma 2.7]{IR}.\\
(2)\(\Rightarrow\)(3). We show \(T_{\mathfrak{p}}\) is tilting implies \(A=T_{\mathfrak{p}}\otimes_{S_{\mathfrak{p}}} \hat{S}_{\mathfrak{p}}\) is tilting. It is clear, since \(\otimes_{S_{\mathfrak{p}}}\hat{S}_{\mathfrak{p}}\) is exact, taking projectives to projectives, that \(A \in \Kb(\smallproj \hat{\Gamma}_{\mathfrak{p}}).\)

We know that   \( \Hom_{\D(\Gamma_{p})}(T_{\mathfrak{p}}, T_{\mathfrak{p}}[t])=0\) for all \(t \neq 0\) by assumption. Thus by  Lemma \ref{01 of 13 march 2023}, for all \(t \neq 0\) we have \[0=\Hom_{\D(\Gamma_{\mathfrak{p}})}(T_{\mathfrak{p}}, T_{\mathfrak{p}}[t])\otimes_{S_{\mathfrak{p}}} \hat{S} \cong  \Hom_{\D(\hat{\Gamma})}(T_{\mathfrak{p}} \otimes_{S_{\mathfrak{p}}} \hat{S},   T_{\mathfrak{p}}[t]\otimes_{S_{\mathfrak{p}}} \hat{S})=\Hom_{\D(\hat{\Gamma})}(A,   A[t]).\] 

Now let \(x\in \D(\Mod \hat{\Gamma}_{\mathfrak{p}})\) such that \(\RHom_{\hat{\Gamma}_{\mathfrak{p}}}(A,x)=0.\) Then for all \(t\in \mathbb{Z}\)
\begin{align*}
0=\mathrm{H}^{t}(\RHom_{\hat{\Gamma}_{\mathfrak{p}}}(A,x))&=\Hom_{\D(\hat{\Gamma}_{\mathfrak{p}})}(A, x[t])\\
&=\Hom_{\D(\hat{\Gamma}_{\mathfrak{p}})}(T_{\mathfrak{p}}\otimes_{S_{\mathfrak{p}}} \hat{S}_{\mathfrak{p}}, x[t])\\
&=\Hom_{\D(\Gamma_{\mathfrak{p}})}(T_{\mathfrak{p}}, \res(x)[t]).
\end{align*}  Thus \( \RHom_{\Gamma_{\mathfrak{p}}}(T_{\mathfrak{p}}, \res(x))=0,\) so since \(T_{\mathfrak{p}}\) is tilting, \(\res(x)\cong 0.\) Since restriction of scalars is exact, \(x \cong 0\)\\
(3)\(\Rightarrow\)(1). By assumption \[0=\Hom_{\D(\hat{\Gamma})}(T_{\mathfrak{p}} \otimes_{S_{\mathfrak{p}}} \hat{S},   T_{\mathfrak{p}}[t]\otimes_{S_{\mathfrak{p}}} \hat{S}) \cong \Hom_{\D(\hat{\Gamma})}(T,   T[t] ) \otimes_{S_{\mathfrak{p}}}  \hat{S}.\] Thus   \(\Hom(T, T[t])=0\) for all \(t \neq 0\), since \(-\otimes_{S_{\mathfrak{p}}} \hat{S}\) is faithfully flat  \cite[Theorem 7.2]{M}.

By assumption \(T\in \Kb( \smallproj \Gamma).\) Let \(x \in \D(\Mod \Gamma )\)  such that \(\RHom (T,x)=0.\) Then for all \(t\in \mathbb{Z}, \Hom_{\D(\Gamma)}(T, x[t])=0.\) Thus, tensoring by \(\otimes {S_{\mathfrak{p}}},\) using Lemma \ref{01 of 13 march 2023} \[ \Hom_{\D(\Gamma)}(T, x[t]) \otimes_{S} S_{\mathfrak{p}}  \otimes_{S_{\mathfrak{p}}} \hat{S}_{\mathfrak{p}} \cong \Hom_{\D(\hat{\Gamma}_{\mathfrak{p}})}(\hat{T}_{\mathfrak{p}}, \hat{x}_{\mathfrak{p}}[t]).\] Since \(\hat{T}_{\mathfrak{p}}\) is tilting, \(\hat{x}_{\mathfrak{p}} \cong 0.\) Thus, since \(-\otimes S_{\mathfrak{p}} \) and  \(-\otimes \hat{S}_{\mathfrak{p}} \) are exact, for all \(t\in \mathbb{Z}\) we have \[ \mathrm{H}^{t}(x)\otimes_{S_{\mathfrak{p}}} S_{\mathfrak{p}} \otimes_{S_{\mathfrak{p}}} \hat{S}_{\mathfrak{p}} \cong \mathrm{H}^{t}(x\otimes_{S_{\mathfrak{p}}} S_{\mathfrak{p}} \otimes_{S_{\mathfrak{p}}} \hat{S}_{\mathfrak{p}}) =0. \] But \(\otimes_{S_{\mathfrak{p}}} \hat{S}_{\mathfrak{p}}\) is faithfully flat, so by  Lemma \ref{03 0f 06 march 2023} \(\mathrm{H}^{t}(x)\cong 0\) for all \(t\in \mathbb{Z}, \) so \(x\cong 0\) as claimed.
 \end{proof}
 
 \begin{cor}\label{02 of 16 march 2023}
Suppose \(T\in \Kb(\smallproj \Gamma)\) and choose \(\mathfrak{m} \in \Max S\). If \(T_{\mathfrak{p}}\) is tilting for all \(\mathfrak{p} \neq \mathfrak{m}\) and \(T_{\mathfrak{m}}\otimes \hat{S}\) is tilting, then \(T\) is tilting.
 \end{cor}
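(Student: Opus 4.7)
The plan is to reduce everything to Lemma \ref{01 of 16 march 2023}, specifically the equivalence (1)$\Leftrightarrow$(3). That is, to conclude $T$ is tilting it suffices to verify that $T \otimes_{S} S_{\mathfrak{p}} \otimes_{S_{\mathfrak{p}}} \hat{S}_{\mathfrak{p}}$ is a one-sided tilting $\hat{\Gamma}_{\mathfrak{p}}$-complex for every $\mathfrak{p} \in \Spec S$. The hypothesis splits naturally across the maximal ideal $\mathfrak{m}$ and the remaining primes, and we handle each case separately.

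For $\mathfrak{p} = \mathfrak{m}$, the required condition is exactly the assumption that $T_{\mathfrak{m}} \otimes \hat{S}$ is tilting. For $\mathfrak{p} \neq \mathfrak{m}$, we are given that $T_{\mathfrak{p}}$ is tilting as a complex over $\Gamma_{\mathfrak{p}}$. Passing from here to $T_{\mathfrak{p}} \otimes_{S_{\mathfrak{p}}} \hat{S}_{\mathfrak{p}}$ being tilting is precisely the content of the implication (2)$\Rightarrow$(3) in Lemma \ref{01 of 16 march 2023}: its proof is entirely local, using Lemma \ref{01 of 13 march 2023} to transport both vanishing of $\Hom$'s in positive degrees and the generation condition across the faithfully flat extension $S_{\mathfrak{p}} \to \hat{S}_{\mathfrak{p}}$, and nothing in that argument requires the hypothesis at primes other than $\mathfrak{p}$. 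Applying it individually for each $\mathfrak{p} \neq \mathfrak{m}$ gives the remaining instances of condition (3).

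Having verified (3) of Lemma \ref{01 of 16 march 2023} for every prime of $S$, the equivalence (3)$\Rightarrow$(1) yields that $T$ is tilting, which is the desired conclusion. There is no real obstacle; the only point worth emphasizing is that the proof of Lemma \ref{01 of 16 march 2023} is genuinely prime-by-prime in the direction (2)$\Rightarrow$(3), so we are free to replace the global hypothesis of (2) by a pointwise hypothesis at all but one prime combined with the (already completed) information at $\mathfrak{m}$.
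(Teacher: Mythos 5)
Your proof is correct and follows essentially the same route as the paper: the paper's own proof is a one-line appeal to Lemma \ref{01 of 16 march 2023} with a case split on $\mathfrak{p}=\mathfrak{m}$ versus $\mathfrak{p}\neq\mathfrak{m}$. You have merely made explicit the point that is implicit there, namely that the implication $(2)\Rightarrow(3)$ in that lemma is proved prime-by-prime, so it can be invoked at each $\mathfrak{p}\neq\mathfrak{m}$ individually, after which $(3)\Rightarrow(1)$ closes the argument.
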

 \begin{proof}
 The proof follows from Lemma \ref{01 of 16 march 2023} by considering the two cases, one when  the prime ideal \(\mathfrak{p}\) is not equal the maximal ideal \( \mathfrak{m}\) and another when \(\mathfrak{p}=\mathfrak{m}.\) 
 \end{proof}
\subsection{One sided tilting on \texorpdfstring{$\Lambda$}{L}}
We now revert to setting \ref{01 of 23 april 2023}, where \(\Lambda\) is derived equivalent to \(U\) where  \(f \colon U \to  \Spec R \) is a flopping contraction and \(R\) is an isolated cDV. This subsection proves that \(C_{\Lambda} \in \Db(\smallmod \Lambda)\) constructed in  Proposition \ref{01 of  06 December 2022} is a one-sided tilting complex.

Recall that the  setup \ref{bi setup} defines \(\Bnohat_{i}\) and \(\Bnohat\).
 
\begin{lemma}\label{01 of 27 march 2023}
If \(y\in \Db(\smallmod \Bnohat)\) is perfect, then \(y\otimes^{\bf{L}}_{\Bphat}{\Bnohat_{i}}_{\Bphat}\) is perfect.
\end{lemma}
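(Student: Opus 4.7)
The strategy is a direct termwise argument. Since $y \in \Db(\smallmod \Bnohat)$ is perfect, we may assume without loss of generality that $y$ is a bounded complex in $\Kb(\smallproj \Bnohat)$, with terms $P^n$ that are finitely generated projective right $\Bnohat$-modules. Because each $P^n$ is projective, the derived tensor product reduces to the underived one, so $y \otimes^{\bf L}_{\Bphat} \Bnohat_i$ is represented by the bounded complex with terms $P^n \otimes_{\Bphat} \Bnohat_i$ in degree $n$. Each $P^n$ is a direct summand of some free module $\Bnohat^{k_n}$, hence $P^n \otimes_{\Bphat} \Bnohat_i$ is a summand of $\Bnohat_i^{k_n}$, and so is a finitely generated projective right $\Bnohat_i$-module. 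Consequently $y \otimes^{\bf L}_{\Bphat} \Bnohat_i$ lies in $\Kb(\smallproj \Bnohat_i)$, which gives perfectness.

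If instead the intended conclusion is perfectness over $\Bnohat$ itself (interpreting the result via the quotient $\Bnohat \to \Bnohat_i$ as flagged by the subscript notation ${\Bnohat_i}_{\Bphat}$), then the functor $-\otimes^{\bf L}_{\Bphat}\Bnohat_i$ is triangulated and sends the generator $\Bnohat$ of $\mathrm{thick}(\Bnohat)$ to $\Bnohat_i$, so the claim reduces to showing $\Bnohat_i \in \Kb(\smallproj \Bnohat)$. This is obtained by feeding the short exact sequence $0 \to I_i \to \Bnohat \to \Bnohat_i \to 0$ into the bimodule description from Claim~\ref{infinite Ii}, namely $I_i \cong \Hom_\scrR(M,\nu_i M) \otimes^{\bf L}_{\upnu_{\kern-1pt i}\kern-2pt \Bnohat} \Hom_\scrR(\nu_i M, M)$, and combining it with the exchange sequence~\eqref{exchange sequence}. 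Applying $\Hom_\scrR(M,-)$ (respectively $\Hom_\scrR(-,M)$) to the exchange sequence yields bounded projective resolutions of the two tensor factors; the relevant $\Ext^1$-vanishing required to obtain short exact sequences is exactly the isolated singularity input already used inside the proof of Claim~\ref{infinite Ii}.

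The lemma has essentially no hard obstacle: it is a formal bookkeeping step that repackages properties already established in Claim~\ref{infinite Ii}, and its role downstream will be to propagate perfectness across the tensor step $-\otimes^{\bf L}_{\Bphat} \Bnohat_i$ when verifying that the bimodule cone $C_{\upalpha,i}$ yields a one-sided tilting complex for $\Lambda$.
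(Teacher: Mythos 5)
Your first paragraph proves a different statement from the lemma. It shows that $y\otimes^{\bf L}_{\Bphat}\Bnohat_i$ lies in $\Kb(\smallproj\Bnohat_i)$, but the lemma asserts perfectness as a complex of \emph{right $\Bnohat$-modules}; the subscript on ${\Bnohat_i}_{\Bphat}$ records the right $\Bnohat$-structure, and this is what is actually needed in Proposition~\ref{02 0f 23 april 2023}, where the result is fed into the equivalence $\mathbb{F}\Upphi_{\upalpha}\colon \D(\Bnohat)\to\D(\hat{\Lambda})$ to conclude $M\in\Kb(\smallproj\hat{\Lambda})$. Perfectness over $\Bnohat_i$ would not yield that.

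Your second paragraph correctly diagnoses the intended reading, and the reduction to $\Bnohat_i\in\Kb(\smallproj\Bnohat)$ (equivalently, via $0\to I_i\to\Bnohat\to\Bnohat_i\to 0$, to $I_i\in\Kb(\smallproj\Bnohat)$) is valid. However, the step where you claim that applying $\Hom_\scrR(M,-)$ and $\Hom_\scrR(-,M)$ to the exchange sequence produces bounded projective resolutions of the two tensor factors is not substantiated: for instance, applying $\Hom_\scrR(M,-)$ to $0\to\upnu_iM\to V\to M$ merely exhibits $\Hom_\scrR(M,\upnu_iM)$ as a submodule of a projective, not as something of finite projective dimension. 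This leaves a real gap.

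The paper's argument sidesteps all of this. Tensoring the short exact sequence $0\to I_i\to\Bnohat\to\Bnohat_i\to 0$ with $y$ gives a triangle $y\otimes^{\bf L}_{\Bphat}I_i\to y\to y\otimes^{\bf L}_{\Bphat}\Bnohat_i$. By Claim~\ref{infinite Ii} the bimodule $I_i$ realizes the equivalence $\Upphi_i^2$, so $-\otimes^{\bf L}_{\Bphat}I_i$ is a standard derived self-equivalence of $\D(\Bnohat)$ and hence preserves $\Kb(\smallproj\Bnohat)$ (Theorem~\ref{rickard thm}). Thus $y\otimes^{\bf L}_{\Bphat}I_i$ is perfect, and the 2-out-of-3 property finishes. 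In particular the equivalence is applied directly to $y$, so your reduction to the generator, while correct, is unnecessary, and no explicit resolution of $I_i$ need be constructed.
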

\begin{proof}
The exact sequence \(0 \to I_{i}\to \Bnohat\to  \Bnohat_{i} \to 0\) gives a triangle, \( y\otimes^{\bf{L}}_{\Bphat} I_{i} \to y \to y\otimes^{\bf{L}}_{\Bphat} \Bnohat_{i}\). Now,  \(-\otimes^{\bf{L}}_{\Bphat} I_{i}\) is an equivalence by \ref{infinite Ii},  so \(y\otimes^{\bf{L}}_{\Bphat} I_{i}\) is perfect. By the  2 out of 3 property \(y\otimes^{\bf{L}}_{\Bphat}{\Bnohat_{i}}_{\Bphat}\) is perfect.
\end{proof}
\begin{definition}
Let  \(\scrT\)   be a triangulated category. Then  we say an object \(A\in \scrT\) is homologically finite if for any object \(B \in \scrT,\) all \(\Hom_{\scrT}(A,B[i])\) are trivial  except for a finite number of \(i\in \mathbb{Z}.\)
\end{definition}
By \cite[Proposition 2.18]{IW4}, for module-finite algebras, being  a perfect complex is equivalent to being  a homologically finite complex.
\begin{prop}\label{02 0f 23 april 2023}
	\(C_{\Lambda} \in \Kb(\smallproj \Lambda). \)
\end{prop}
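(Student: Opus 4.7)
The plan is to exploit the bimodule triangle
$$C \to {}_\Lambda \Lambda_\Lambda \to X \to C[1]$$
from Proposition \ref{01 of  06 December 2022}, where $X \colonequals {}_\Lambda\hat\Lambda\otimes(\mathbb{M}\otimes\uptau_\upalpha\otimes Z_{\upalpha,i})\otimes\hat\Lambda_\Lambda$.  Since $\Lambda$ is trivially perfect, the $2$-out-of-$3$ property of $\Kb(\smallproj\Lambda)$ reduces the proof to showing that $X$, viewed as a right $\Lambda$-module, lies in $\Kb(\smallproj\Lambda)$.  I would check this prime-by-prime: perfectness of a bounded complex of finitely generated modules is Zariski-local on $\Spec R$, and at the unique singular prime $\mathfrak{m}$ it may additionally be tested after $\mathfrak{m}$-adic completion thanks to the faithful flatness of $R_\mathfrak{m}\to \hat R$.

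For a prime $\mathfrak{p}\neq\mathfrak{m}$ I would argue $X_\mathfrak{p}=0$.  The right $\Lambda$-action on $X$ factors through the rightmost tensor copy of $\hat\Lambda$, so the support of $X$ as a right $\Lambda$-module is controlled by $Z_{\upalpha,i}\otimes_{\hat\Lambda}\hat\Lambda$.  Lemma \ref{02 of 05th november 2022} identifies this with $\mathbb{F}\Upphi_\upalpha(\Bnohat_i)$, and the argument of Claim \ref{01 0f 20th november 2022} shows that this module is filtered by the simples $\mathbb{F}\Upphi_\upalpha(S_i)$, each of which is of finite length and supported only at $\mathfrak{m}$.

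At $\mathfrak{p}=\mathfrak{m}$, applying $-\otimes^{\bf L}_\Lambda\hat\Lambda$ to the bimodule triangle produces a triangle $C\otimes\hat\Lambda\to \hat\Lambda\to X\otimes\hat\Lambda\to$ of right $\hat\Lambda$-modules, whose leftmost term is identified in the proof of Proposition \ref{02 0f 06 march 2023}\eqref{02 0f 06 march 2023 two} as
$$C\otimes^{\bf L}_\Lambda\hat\Lambda\cong \hat\Lambda\otimes \mathbb{M}\otimes \uptau_\upalpha\otimes I_i\otimes \uptau^*_\upalpha\otimes \mathbb{M}^*.$$
The right hand side is an iterated tensor of the standard Morita/tilting bimodules $\mathbb{M},\uptau_\upalpha$ with the invertible bimodule $I_i$ of Claim \ref{infinite Ii}, terminating in the projective right $\hat\Lambda$-module $\mathbb{M}^*$, so it belongs to $\Kb(\smallproj\hat\Lambda)$.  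A further application of $2$-out-of-$3$ then yields that $X\otimes\hat\Lambda$ is perfect over $\hat\Lambda$, which together with the previous paragraph closes the local analysis.

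The main obstacle I anticipate is the bookkeeping around one-sided structures: keeping careful track of which tensor factor carries the right $\Lambda$-action across the restrictions of scalars along $\Lambda\to\hat\Lambda$ and the Morita equivalence $\hat\Lambda\sim\Anohat$, so that the support argument at $\mathfrak{p}\neq\mathfrak{m}$ applies to the correct right module rather than to an ambient bimodule structure.  A secondary issue is articulating the Zariski-local/completion-local criterion for perfectness in exactly the form required, since the analogous Lemma \ref{01 of 16 march 2023} is stated for tilting complexes and assumes the input is already in $\Kb(\smallproj\Gamma)$.
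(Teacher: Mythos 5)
Your proposal takes a genuinely different route from the paper, and although the overall shape is reasonable, it contains a real gap that you yourself flag but do not resolve. You reduce, correctly via two-out-of-three, to showing the third term $X=M\otimes^{\bf L}_{\hat\Lambda}\hat\Lambda_\Lambda$ is perfect over $\Lambda$, and then attempt to detect this perfectness prime-by-prime on $\Spec R$ plus $\mathfrak m$-adic completion at the singular point. The support argument at $\mathfrak p\neq\mathfrak m$ is fine (it is essentially the argument of Claim~\ref{01 0f 20th november 2022} and is also used in Lemma~\ref{01 of 20 march 2023}), and the identification $C\otimes^{\bf L}_\Lambda\hat\Lambda\cong\hat\Lambda\otimes\mathbb M\otimes\uptau_\upalpha\otimes I_i\otimes\uptau_\upalpha^*\otimes\mathbb M^*$ together with two-out-of-three in $\Db(\hat\Lambda)$ does show $X\otimes_\Lambda\hat\Lambda$ is perfect over $\hat\Lambda$. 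But the bridge back from ``$X_\mathfrak p$ perfect for every $\mathfrak p$, and $X_\mathfrak m\otimes\hat R$ perfect over $\hat\Lambda$'' to ``$X\in\Kb(\smallproj\Lambda)$'' is exactly the missing ingredient. The paper's Lemma~\ref{01 of 13 march 2023} goes in the wrong direction: it \emph{assumes} $T\in\Kb(\smallproj\Gamma)$ in order to commute $\Hom$ with localisation and completion. One would need, in addition, a local-to-global perfectness criterion for module-finite $R$-algebras (including a uniform bound on the local projective dimensions and a faithfully flat descent statement along $R_\mathfrak m\to\hat R$), none of which is established in the paper.

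The paper instead avoids this issue entirely. Its proof first shows $M\in\Kb(\smallproj\hat\Lambda)$ via the mutation/Morita equivalences and Lemma~\ref{01 of 27 march 2023}, then observes that $M$ lies in $\Dbfl(\smallmod\hat\Lambda)$ (since $\Bnohat_i$ has finite length), so that Lemma~\ref{04 0f 24 january 2023}\eqref{04 0f 24 january 2023 two} together with the Iyama--Reiten embedding $\fl\hat\Lambda\hookrightarrow\fl\Lambda$ gives a clean bijection between $\Hom_{\Db(\Lambda)}(M\otimes\hat\Lambda_\Lambda,x[t])$ and $\Hom_{\Db(\hat\Lambda)}(M,\hat x_\mathfrak m[t])$. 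Homological finiteness of $M$ therefore transfers to $M\otimes\hat\Lambda$, and by~\cite[Proposition~2.18]{IW4} this is equivalent to perfectness for module-finite algebras; two-out-of-three then concludes. This sidesteps all local-global subtleties for perfectness. If you want to pursue your route, you would need to supply and prove the local-global perfectness criterion separately; as written it is a gap, though a plausibly fillable one.
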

\begin{proof}
The exact sequence \(0\to I_{i}\to \Bnohat\to \Bnohat_{i}\to 0  \) induces a triangle   \[{\scriptstyle
\begin{array}{c}
\begin{tikzpicture}
\node (a1) at (0,0) {$\left(\hat{\Lambda}\otimes \mathbb{M} \otimes \uptau_{\upalpha}\otimes I_{i}\right)_{\Bnohat}$};
\node (a2) at (3.5,0) {$\left(\hat{\Lambda}\otimes \mathbb{M} \otimes \uptau_{\upalpha}\right)_{\Bnohat}$};
\node (a3) at (7,0) {$\left(\hat{\Lambda}\otimes \mathbb{M}\otimes  \uptau_{\upalpha}\otimes \Bnohat_{i}\right)_{\Bnohat}$};
\node (a4) at (9.25,0){} ;
\node (b1) at (0,-0.8) {$\Upphi_{i}^{2}\Upphi_{\upalpha }^{-1}\mathbb{F}^{-1}(\hat{\Lambda})$};
\node (b2) at (4,-0.8) {$\Upphi_{\upalpha }^{-1}\mathbb{F}^{-1}(\hat{\Lambda})$};
\draw[->] (a1) to node[above] {$\scriptstyle$} (a2);
\draw[->] (a2) to node[above] {$\scriptstyle$} (a3);
\draw[->] (a3) to node[above] {$\scriptstyle$} (a4);
\draw (a1) [transparent]edge node[rotate=270,opacity=1] {$\cong$} (b1);
\draw (a2)[transparent] edge node[rotate=270,opacity=1] {$\cong$} (b2);
\end{tikzpicture}
\end{array}}\] in \(\Db(\smallmod \Bnohat).\)
Since \(\Upphi_{i}, \Upphi_{\upalpha }, \mathbb{F}\) are equivalences, then  \(\Upphi_{i}^{2}\Upphi_{\upalpha }^{-1}\mathbb{F}^{-1}(\hat{\Lambda})\) and \(\Upphi_{\upalpha }^{-1}\mathbb{F}^{-1}(\hat{\Lambda})\) are  perfect as  complexes of right \(\Bnohat\)-modules. By  the 2 out of 3 property, \((\hat{\Lambda}\otimes \mathbb{M}\otimes  \uptau_{\upalpha}\otimes \Bnohat_{i})_{\Bnohat}\) is  thus perfect as a  complex of right \(\Bnohat\)-modules. 

Now set  
\begin{align*} 
M&\colonequals  \hat{\Lambda} \otimes^{\bf{L}}_{\hat{\Lambda}}  \mathbb{M}\otimes^{\bf{L}}_{\Aphat}  \uptau_{\upalpha}\otimes^{\bf{L}}_{\Bphat} \Bnohat_{i} \otimes^{\bf{L}}_{\Bphat_{i}}\Bnohat_{i}\otimes^{\bf{L}}_{\Bphat}\uptau_{\upalpha}^{*}\otimes^{\bf{L}}_{\Aphat}\mathbb{M}^{*}\\
&= \hat{\Lambda} \otimes^{\bf{L}}_{\hat{\Lambda}}  \mathbb{M}\otimes^{\bf{L}}_{\Aphat}  \uptau_{\upalpha}\otimes^{\bf{L}}_{\Bphat} Z_{\upalpha,i} \tag{by definition of \(Z_{\upalpha,i}\)},  \end{align*} 
so  \eqref{01 of 24 january 2023} becomes 
\(C\to {}_{\Lambda}\Lambda_{\Lambda} \to {}_{\Lambda}M \otimes^{\bf{L}}_{\hat{\Lambda}} \hat{\Lambda}_{\Lambda} \to C[1].\) Now observe that 
\[ 
M_{\hat{\Lambda}}\cong \mathbb{F}\Upphi_{\upalpha}\left( \Upphi_{\upalpha}^{-1}\mathbb{F}^{-1}(\hat{\Lambda}) \otimes^{\bf{L}}_{\Bphat}{\Bnohat_{i}}_{\Bphat}\right).
\]   
Since \(\Upphi_{\upalpha}^{-1}\mathbb{F}^{-1}(\hat{\Lambda})\)  is perfect by above,  \(\Upphi_{\upalpha}^{-1}\mathbb{F}^{-1}(\hat{\Lambda}) \otimes^{\bf{L}}_{\Bphat}{\Bnohat_{i}}_{\Bphat}\) is perfect  by Lemma \ref{01 of 27 march 2023}. Since \(\Upphi_{\upalpha }, \mathbb{F}\) are equivalences,  \(M\in \Kb(\smallproj \hat{\Lambda}).\)

To show that \(M\otimes \hat{\Lambda}_{\Lambda} \in \Kb(\smallproj \Lambda)\), note  that the complex \(M\in \Dbfl (\smallmod \hat{\Lambda})\)   since \(\Bnohat_{i}\in \fl \Bnohat.\) Thus 
\begin{align*}
0&= \Hom_{\Db(\Lambda)}(M \otimes \hat{\Lambda}_{\Lambda}, x[t])\\
&\stackrel{\ref{03 0f 06 march 2023}}{\Longleftrightarrow} \Hom_{\Db(\hat{\Lambda})}(M \otimes \hat{\Lambda} \otimes_{\Lambda} \hat{\Lambda}, \hat{x}_{\mathfrak{m}}[t])=0\tag{since $\Supp N\otimes \hat{\Lambda}=\{ \mathfrak{m}\}$}\\
&\stackrel{\ref{04 0f 24 january 2023}\eqref{04 0f 24 january 2023 two}}{\Longleftrightarrow}  \Hom_{\Db(\hat{\Lambda})}(M, \hat{x}_{\mathfrak{m}}[t])=0
\end{align*}

 Since \(M\in \Kb(\smallproj \hat{\Lambda})\) by above, it is homologically finite. Thus, by above, \(M\otimes \hat{\Lambda}\) is also  homologically finite, which in turn implies it is perfect. Considering \eqref{01 of 24 january 2023}, by the 2 out of 3 property, \(C_{\Lambda} \in \Kb(\smallproj \Lambda),\) since  both \(\Lambda\) and  \(M\otimes \hat{\Lambda}\)  are.
\end{proof}
\begin{lemma}\label{01 of 20 march 2023}
As a right \(\Lambda\)-module, \(\Ext^{t}_{\Lambda}(C_{\Lambda}, C_{\Lambda})=0\) for all \(t\neq 0.\) Thus C  is one sided tilting complex.
\end{lemma}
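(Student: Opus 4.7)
The plan is to verify the Ext vanishing by a local-to-global argument, and then upgrade the conclusion to one-sided tilting using Lemma~\ref{01 of 16 march 2023} together with Proposition~\ref{02 0f 23 april 2023}. First, I would localise the defining triangle
\[
C \to {}_{\Lambda}\Lambda_{\Lambda} \to {}_{\Lambda}M \otimes^{\bf L}_{\hat{\Lambda}} \hat{\Lambda}_{\Lambda} \to C[1]
\]
at a prime $\mathfrak{p} \neq \mathfrak{m}$. Since $M \cong \hat{\Lambda} \otimes \mathbb{M} \otimes \uptau_{\upalpha} \otimes Z_{\upalpha,i}$ involves a factor $\Bnohat_i$ which by Claim~\ref{01 0f 20th november 2022} (combined with Lemma~\ref{02 0f 05th november 2022}) has support only at $\mathfrak{m}$, the right-hand term vanishes after localising at $\mathfrak{p}$. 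This yields $C_{\mathfrak{p}} \cong \Lambda_{\mathfrak{p}}$, so trivially $\Ext^t_{\Lambda_{\mathfrak{p}}}(C_{\mathfrak{p}}, C_{\mathfrak{p}}) = 0$ for $t \neq 0$, and $C_{\mathfrak{p}}$ is one-sided tilting over $\Lambda_{\mathfrak{p}}$.

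Next, I would complete at $\mathfrak{m}$ and invoke Proposition~\ref{02 0f 06 march 2023}. The proof of that proposition identifies $C \otimes^{\bf L}_{\Lambda} \hat{\Lambda}_{\hat{\Lambda}}$ with ${}_{\hat{\Lambda}} \hat{\Lambda} \otimes \mathbb{M} \otimes \uptau_{\upalpha} \otimes I_i \otimes \uptau_{\upalpha}^* \otimes \mathbb{M}^*_{\hat{\Lambda}}$, which as a right $\hat{\Lambda}$-module is precisely $\mathbb{F}\Upphi_{\upalpha}\Upphi_i^2\Upphi_{\upalpha}^{-1}\mathbb{F}^{-1}(\hat{\Lambda})$. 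Since $\Upphi_{\upalpha}$ and $\Upphi_i$ are the mutation-type derived equivalences, and $\mathbb{F}$ is a Morita equivalence, this composition sends the tilting object $\hat{\Lambda}$ to a one-sided tilting complex over $\hat{\Lambda}$. In particular $\Ext^t_{\hat{\Lambda}}(C \otimes \hat{\Lambda}, C \otimes \hat{\Lambda}) = 0$ for $t \neq 0$.

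Combining the two cases, Lemma~\ref{01 of 13 march 2023} and faithful flatness (Lemma~\ref{03 0f 06 march 2023}) imply $\Ext^t_{\Lambda}(C, C) = 0$ for $t \neq 0$. Together with $C_{\Lambda} \in \Kb(\smallproj\,\Lambda)$ from Proposition~\ref{02 0f 23 april 2023}, and the two local tilting conditions already verified, the direction (3)$\Rightarrow$(1) of Lemma~\ref{01 of 16 march 2023} promotes $C_{\Lambda}$ to a one-sided tilting complex.

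The main obstacle is the complete-local step, where one must pin down that the underlying one-sided complex $C \otimes \hat{\Lambda}_{\hat{\Lambda}}$ really is the image of $\hat{\Lambda}$ under $\mathbb{F}\Upphi_{\upalpha}\Upphi_i^2\Upphi_{\upalpha}^{-1}\mathbb{F}^{-1}$. The bimodule identification in Proposition~\ref{02 0f 06 march 2023}(1), combined with the already-established formula $M_{\hat{\Lambda}} \cong \mathbb{F}\Upphi_{\upalpha}(\Upphi_{\upalpha}^{-1}\mathbb{F}^{-1}(\hat{\Lambda}) \otimes^{\bf L}_{\Bphat} \Bnohat_i)$ from the proof of Proposition~\ref{02 0f 23 april 2023}, should do the bookkeeping. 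The delicate point is that the triangle defining $C$ involves tensoring by $\hat{\Lambda}$ on both sides, and one must check that collapsing one of these via Lemma~\ref{04 0f 24 january 2023}\eqref{04 0f 24 january 2023 two} correctly produces the $\hat{\Lambda}$-bimodule $I_i$ (transported by Morita and mutation) whose associated one-sided complex is tilting.
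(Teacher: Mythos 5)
Your proposal follows essentially the same local-to-global route as the paper: away from $\mathfrak{m}$ the correction term vanishes so $C_{\mathfrak{p}}\cong\Lambda_{\mathfrak{p}}$, at $\mathfrak{m}$ the completion $C\otimes\hat{\Lambda}$ is identified via Proposition~\ref{02 0f 06 march 2023} with the image of $\hat{\Lambda}$ under the equivalence $\mathbb{F}\Upphi_{\upalpha}\Upphi_i^2\Upphi_{\upalpha}^{-1}\mathbb{F}^{-1}$, and then perfectness from Proposition~\ref{02 0f 23 april 2023} closes the argument. The only imprecision is that the mixed condition you verify (localized tilting at $\mathfrak{p}\neq\mathfrak{m}$, completed tilting at $\mathfrak{m}$) is exactly what Corollary~\ref{02 of 16 march 2023} packages, rather than a single clause of Lemma~\ref{01 of 16 march 2023}, but this is the citation the paper itself uses and the logic is the same.
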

\begin{proof}
 As a triangle of right \(\Lambda\)-modules, \eqref{01 of 24 january 2023} can be rewritten as 
\begin{equation*}
C_{\Lambdaphat} \to \Lambda_{\Lambdaphat} \to \hat{\Lambda} \otimes^{\bf{L}}_{\hat{\Lambda}} \left(  \mathbb{M}\otimes^{\bf{L}}_{\Aphat}  \uptau_{\upalpha}\otimes^{\bf{L}}_{\Bphat} Z_{\upalpha,i} \right)\otimes^{\bf{L}}_{\hat{\Lambda}} \hat{\Lambda}_{\Lambdaphat}\to, 
\end{equation*} 
which when localized  at a prime ideal \(\mathfrak{p}\) becomes 
\begin{equation}\label{02 of 20 march 2023}
C_{\Lambdaphat}\otimes_{R} R_{\mathfrak{p}} \to \Lambda_{\Lambdaphat}\otimes_{R} R_{\mathfrak{p}}  \to \left( \hat{\Lambda} \otimes^{\bf{L}}_{\hat{\Lambda}}  \mathbb{M}\otimes^{\bf{L}}_{\Aphat}  \uptau_{\upalpha}\otimes^{\bf{L}}_{\Bphat} Z_{\upalpha,i} \otimes^{\bf{L}}_{\hat{\Lambda}} \hat{\Lambda}_{\Lambdaphat} \right)\otimes_{R} R_{\mathfrak{p}} \to 
\end{equation} 

When \(\mathfrak{p} \neq \mathfrak{m},\) since \(\hat{\Lambda}\otimes\mathbb{M}\otimes \uptau_{\upalpha} \otimes Z_{\upalpha,i} \otimes \hat{\Lambda}\) is supported at only the maximal ideal \(\mathfrak{m}\),  \((\hat{\Lambda}\otimes\mathbb{M}\otimes \uptau_{\upalpha} \otimes Z_{\upalpha,i} \otimes \hat{\Lambda})_{\mathfrak{p}}=0.\) Thus  \(C_{\mathfrak{p}}\cong \Lambda_{\mathfrak{p}}\), which is tilting in \(\Db(\Lambda_{\mathfrak{p}}).\)

When  \(\mathfrak{p} = \mathfrak{m},\)  by the last line in the  proof of Proposition \ref{02 0f 06 march 2023}\eqref{02 0f 06 march 2023 two},  there is an isomorphism of   \(\Lambda\)-\(\hat{\Lambda}\) bimodules
\[ 
C \otimes_{\Lambda} \hat{\Lambda} \cong \hat{\Lambda}\otimes \mathbb{M} \otimes \uptau_{\upalpha} \otimes I_{i} \otimes \uptau_{\upalpha}^{*}\otimes \mathbb{M}^{*}.
 \] Considering these   as simply  right  \(\hat{\Lambda}\)-modules, it follows that  \[C\otimes_{R} R_{\mathfrak{m}} \otimes \hat{R} \cong  \mathbb{F}\Upphi_{\upalpha}\Upphi_{i}^{2}\Upphi_{\upalpha}^{-1}\mathbb{F}^{-1} (\hat{\Lambda}).\] Since \(\hat{\Lambda}\) is tilting in \(\Db(\hat{\Lambda})\)  and \(\mathbb{F}, \Upphi_{\upalpha }, \Upphi_{i}\) are all equivalences, it follows that \(C\otimes_{R} R_{\mathfrak{m}} \otimes \hat{R}\) is also tilting. Since \(C_{\Lambda} \in \Kb(\smallproj \Lambda)\)  by Proposition \ref{02 0f 23 april 2023}, using Corollary \ref{02 of 16 march 2023}  it follows that \(C\) is a one-sided tilting complex.
\end{proof}
\subsection{Two sided tilting on \texorpdfstring{$\Lambda$}{L}}
\begin{definition}
A bimodule \({}_{\Gamma}X_{\Lambda},\)  is called a two-sided tilting complex  if 
\begin{enumerate}
\item \( \Gamma \to \End_{\D(\Lambda)}(X, X)\) induced by \(-\otimes_{\Gamma}^{\bf{L}}X\)  is an isomorphism.
\item \(X_{\Lambda}\)  is a one sided tilting complex in the sense of Definition \textnormal{\ref{ 01 0f 21 march 2023}}.
	\end{enumerate}
\end{definition}

\begin{theorem}\label{twosided tilt thm}
\({}_{\Lambda}C_{\Lambda}\) is a two-sided tilting complex.
\end{theorem}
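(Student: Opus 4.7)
My proof plan is to verify condition (1) in the definition of two-sided tilting, namely that the natural map
\[
\eta\colon {}_\Lambda\Lambda_\Lambda \longrightarrow \RHom_\Lambda({}_\Lambda C_\Lambda, {}_\Lambda C_\Lambda)
\]
induced by $-\otimes^{\bf{L}}_\Lambda C$ is an isomorphism in the derived category of $\Lambda$-$\Lambda$ bimodules, since condition (2) is exactly Lemma \ref{01 of 20 march 2023}. The strategy is the same local-global philosophy that drove the one-sided case: check $\eta$ is an isomorphism by localising at each prime $\mathfrak{p}\in\Spec R$ and, at the unique singular point $\mathfrak{m}$, by passing to the completion. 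Just as in Corollary \ref{02 of 16 march 2023}, faithful flatness of $R_\mathfrak{p}\to \hat{R}_\mathfrak{p}$ together with Lemma \ref{03 0f 06 march 2023} reduces the assertion to checking that each cohomology of the cone of $\eta$ vanishes after this localisation-then-completion procedure.

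First I would handle the easy case $\mathfrak{p}\ne\mathfrak{m}$. The triangle \eqref{01 of 24 january 2023} localises to \eqref{02 of 20 march 2023}, and because $\hat{\Lambda}\otimes\mathbb{M}\otimes\uptau_\upalpha\otimes Z_{\upalpha,i}\otimes\hat{\Lambda}$ is supported only at $\mathfrak{m}$, the right-hand term vanishes, forcing $C_\mathfrak{p}\cong \Lambda_\mathfrak{p}$ as $\Lambda_\mathfrak{p}$-bimodules. Hence $\eta_\mathfrak{p}$ is an isomorphism trivially.

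Next, at $\mathfrak{p}=\mathfrak{m}$, I would invoke the key commutative diagram of Proposition \ref{02 0f 06 march 2023}\eqref{02 0f 06 march 2023 two}, which identifies $\mathrm{F}\circ(\mathbb{F}\,\Upphi_\upalpha\,\Upphi_i^2\,\Upphi_\upalpha^{-1}\mathbb{F}^{-1})$ with $\TW_{\upalpha,i}\circ\mathrm{F}$. Concretely, using the bimodule isomorphism
\[
{}_\Lambda C\otimes^{\bf L}_\Lambda \hat{\Lambda}_{\hat{\Lambda}}\cong {}_\Lambda \hat{\Lambda}\otimes \mathbb{M}\otimes \uptau_\upalpha\otimes I_i\otimes \uptau_\upalpha^*\otimes \mathbb{M}^*_{\hat{\Lambda}}
\]
established in the last line of that proof, and the analogous statement on the left (obtained by tensoring \eqref{01 of 24 january 2023} with $\hat{\Lambda}\otimes^{\bf L}_\Lambda-$ and repeating the argument verbatim), the bimodule $\hat{\Lambda}\otimes^{\bf L}_\Lambda C\otimes^{\bf L}_\Lambda\hat{\Lambda}$ becomes the two-sided lift of $\mathbb{F}\Upphi_\upalpha\Upphi_i^2\Upphi_\upalpha^{-1}\mathbb{F}^{-1}$. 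Since each of $\mathbb{F}$, $\Upphi_\upalpha$ and $\Upphi_i$ is a standard derived equivalence with a known two-sided tilting bimodule, the composite is a two-sided tilting complex over $\hat{\Lambda}$, and in particular its endomorphism ring in the derived category of $\hat{\Lambda}$ is $\hat{\Lambda}$, concentrated in degree zero. The map $\eta$ after localising at $\mathfrak{m}$ and completing is the natural map into this endomorphism ring, which is therefore an isomorphism.

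Finally I would combine these two cases using Lemma \ref{01 of 13 march 2023} to pass the Hom-localisation-and-completion through $\RHom_\Lambda(C,-)$, and Lemma \ref{03 0f 06 march 2023} to conclude that the cone of $\eta$ vanishes globally. Together with Lemma \ref{01 of 20 march 2023}, this gives both conditions of the definition and completes the proof. The main obstacle I expect is bookkeeping: ensuring that the natural bimodule map $\eta$ is genuinely the one that intertwines with the two-sided tilting bimodule of the complete local composition under the identifications provided by Proposition \ref{02 0f 06 march 2023}, as opposed to merely agreeing with it as a one-sided map. This should follow because \eqref{01 of 24 january 2023} is constructed in the derived category of $\Lambda$-$\Lambda$ bimodules from the very start, so all maps in the reduction are bimodule maps by construction; nonetheless, verifying the compatibility carefully is the delicate point.
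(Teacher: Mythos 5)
Your overall strategy matches the paper's: use Lemma \ref{01 of 20 march 2023} for the one-sided condition, and prove the remaining ring map $\Lambda \to \Hom_{\D(\Lambda)}(C,C)$ is an isomorphism by a local-global argument, handling $\mathfrak{p}\neq\mathfrak{m}$ via $C_\mathfrak{p}\cong\Lambda_\mathfrak{p}$ exactly as in the paper. The divergence is at $\mathfrak{p}=\mathfrak{m}$, and there your argument has a genuine gap.

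You want to show that $\hat{\Lambda}\otimes^{\bf L}_\Lambda C\otimes^{\bf L}_\Lambda \hat{\Lambda}$ is a two-sided tilting complex over $\hat{\Lambda}$ by combining the one-sided bimodule isomorphism ${}_\Lambda C\otimes^{\bf L}_\Lambda\hat{\Lambda}_{\hat{\Lambda}}\cong {}_\Lambda\hat{\Lambda}\otimes\mathbb{M}\otimes\uptau_\upalpha\otimes I_i\otimes\uptau_\upalpha^*\otimes\mathbb{M}^*_{\hat{\Lambda}}$ from Proposition \ref{02 0f 06 march 2023} with a claimed left-sided analogue. Two problems arise. First, the object $\hat{\Lambda}\otimes^{\bf L}_\Lambda C\otimes^{\bf L}_\Lambda\hat{\Lambda}$ equals $\hat{R}\otimes_R C\otimes_R\hat{R}$, which is a \emph{double} completion; the object relevant to the endomorphism ring computation via Lemma \ref{01 of 13 march 2023} is the single completion $C\otimes_R\hat{R}=C\otimes_\Lambda\hat{\Lambda}$. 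Since $\hat{R}\otimes_R\hat{R}\not\cong\hat{R}$ (and $C$ is not supported only at $\mathfrak{m}$, so Lemma \ref{04 0f 24 january 2023} does not apply to it), these differ, and your identification of $\hat{\Lambda}\otimes C\otimes\hat{\Lambda}$ with the composite tilting complex would fail. Second, even working with the correct object $C\otimes_\Lambda\hat{\Lambda}$, the paper's Proposition \ref{02 0f 06 march 2023} only establishes the isomorphism as $\Lambda$-$\hat{\Lambda}$ bimodules; promoting it to a $\hat{\Lambda}$-$\hat{\Lambda}$ isomorphism is not a "verbatim" repetition, since the finite-length arguments used there do not apply to $C$ or to $\hat{\Lambda}\otimes\mathbb{M}\otimes\uptau_\upalpha\otimes I_i\otimes\uptau_\upalpha^*\otimes\mathbb{M}^*$, neither of which has finite length. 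The paper sidesteps both issues entirely: it invokes the universal property of $\mathfrak{m}$-adic completion to pin down $\hat{f}$ uniquely as a continuous ring homomorphism, then exhibits a second commutative square (via Corollary \ref{01 0f 02 march 2023}) forcing $\hat{f}=s\circ\mathrm{G}^{-1}_{\upalpha,i}\circ r^{-1}$, a composite of known isomorphisms. No two-sided bimodule identification over $\hat{\Lambda}$ is ever required. You correctly flag this bookkeeping as the delicate point, but the claim that it "should follow" does not survive scrutiny; the universal-property argument is the missing ingredient.
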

\begin{proof}
We know \(C_{\Lambda}\) is one sided tilting by Lemma \ref{01 of 20 march 2023}. Thus  it suffices to prove that 
 \[f\colon \Lambda \xrightarrow{-\otimes_{\Lambda}^{\bf{L}}C} \Hom_{\D(\Lambda)}(C, C)\] is an isomorphism. Isomorphisms of \(R\)-modules can be checked locally. So,  it suffices to show that \begin{equation}\label{01 0f 07 march 2023}\Lambda_{\mathfrak{p} } \xrightarrow{f_{\mathfrak{p}}=-\otimes^{\bf{L}}_{\Lambda_{\mathfrak{p}}}C_{\mathfrak{p} }} \Hom_{\D(\Lambda_{\mathfrak{p} })}(C_{\mathfrak{p} }, C_{\mathfrak{p} }),\end{equation}  is an isomorphism, for all \(\mathfrak{p} \in \Spec R.\)
 
 When \(\mathfrak{p} \neq \mathfrak{m}\), considering  the triangle \eqref{01 of 24 january 2023}, \({}_{\Lambda_{\mathfrak{p}}}C_{\mathfrak{p}_{\Lambda_{\mathfrak{p}}}} \cong {}_{\Lambda_{\mathfrak{p}}}\Lambda_{\mathfrak{p}_{\Lambda_{\mathfrak{p}}}},\) thus  \(f_{\mathfrak{p}}\) is an isomorphism.

 When \(\mathfrak{p} = \mathfrak{m}\), now \(f_{\mathfrak{m}}\) is an  isomorphism if and only if \(\hat{f}_{\mathfrak{m}}= \hat{f}\) is an isomorphism. By the universal property of completion  \cite[p.57]{M}, \(\hat{f}\) is a continuous ring homomorphism  such that {\scriptsize
 \[
	\begin{array}{c}
	\begin{tikzpicture}
	\node (b1) at (0,-0.5) {$ \Hom_{\D(\Lambda)}(\Lambda, \Lambda) \otimes \hat{R}$};
	\node (b6) at (4,-0.5) {$\Hom_{\D(\Lambda)}(C, C) \otimes \hat{R}$};
	\node (c1) at (0,-2) {$ \Hom_{\D(\Lambda)}(\Lambda, \Lambda)$};
	\node (c6) at (4,-2) {$\Hom_{\D(\Lambda)}(C_{\Lambda}, C_{\Lambda})$};
	\draw[->] (b1) to node[above] {$ \exists ! \hat{f}$} (b6);
	\draw[->] (c1) to node[above] {$\scriptstyle f $} (c6); 
	\draw[->] (c1) to node[above] {$ $} (b1);	
	\draw[->] (c6) to node[above] {$ $} (b6);		
	\end{tikzpicture}			
	 \end{array}
	 \]} commutes.  By Lemma \ref{01 of 13 march 2023}, since \(C_{\Lambda}\) is perfect
{\scriptsize \[
\begin{array}{c}
\begin{tikzpicture}
\node (empty1) at (-1.2,-0.5){};
\node (empty2) at (-1,-3){};
\node (empty3) at (5.2,-0.5){};
\node (empty4) at (5.2,-3){};
\node (a1) at (0,-0.5) {$ \Hom_{\D(\Lambda)}(\Lambda, \Lambda)\otimes \hat{R}$};
\node (a6) at (4,-0.5) {$\Hom_{\D(\Lambda)}(C, C)\otimes \hat{R}$};
\node (b1) at (0,-1.5) {$ \Hom_{\D(\hat{\Lambda})}(\hat{\Lambda}, \hat{\Lambda})$};
\node (b6) at (4,-1.5) {$\Hom_{\D(\hat{\Lambda})}(\hat{C}, \hat{C})$};
\node (c1) at (0,-3) {$ \Hom_{\D(\Lambda)}(\Lambda, \Lambda)$};
\node (c6) at (4,-3) {$\Hom_{\D(\Lambda)}(C_{\Lambda}, C_{\Lambda})$};
\draw[->] (a1) to node[above] {$\hat{f}$} (a6);
\draw[->] (c1) to node[left] {$ \mathrm{F}^{\LA}$} (b1);
\draw[->] (c6) to node[right] {$ \mathrm{F}^{\LA}$} (b6);
\draw [<-](a1) to node[right] {$\cong $} node[left] {$r$} (b1);
\draw[<-] (a6) to node[right] {$\cong $} node[left] {$s$ }(b6);
\draw[->] (c1) to node[above] {$\scriptstyle f $} (c6);	
\draw [->, bend left=15] (empty2) edge (empty1);
\draw [->, bend right=20] (empty4) edge (empty3);		 
 \end{tikzpicture}			
\end{array}
 \]} commutes. But by Corollary \ref{01 0f 02 march 2023} and setting {\scriptsize \[  \mathrm{G}_{\upalpha,i}\colonequals \D(\hat{\Lambda})\xrightarrow{\mathbb{F}^{-1}} \D(\Anohat) \xrightarrow{\Upphi_{\upalpha}^{-1}} \D(\Bnohat)  \xrightarrow{\Upphi_{i}^{2}} \D(\Bnohat) \xrightarrow{\Upphi_{\upalpha}} \D(\Anohat) \xrightarrow{\mathbb{F}} \D(\hat{\Lambda}), \]} the diagram
  {\scriptsize \[
 \begin{array}{c}
 \begin{tikzpicture}
 \node (b1) at (0,-0.5) {$\Hom_{\D(\hat{\Lambda})}(\hat{\Lambda},\hat{\Lambda}) $};
 \node (b6) at (4,-0.5) {$ \Hom_{\D(\hat{\Lambda})}(\hat{C},\hat{C})$};
 \node (c1) at (0,-2) {$\Hom_{\D(\Lambda)}(\Lambda,\Lambda) $};
 \node (c6) at (4,-2) {$\Hom_{\D(\Lambda)}(C,C)  $};
 \draw[->] (b1) to node[above] {$ \scriptstyle \mathrm{G}^{-1}_{\upalpha,i}$} (b6);
 \draw[->] (c1) to node[above] {$ f$} node[below] {$-\otimes C$}(c6); 
 \draw[->] (c1) to node[left] {$ \mathrm{F}^{\LA}$} (b1);	
 \draw[->] (c6) to node[right] {$ \mathrm{F}^{\LA} $} (b6);					
 \end{tikzpicture}			
 \end{array}
 \]} commutes, and thus  by uniqueness, \(\hat{f}=s\circ \mathrm{G}^{-1}_{\upalpha,i}\circ r^{-1}.\) Since \(r,s\) are bijections, and  so is \(\mathrm{G}^{-1}_{\upalpha,i}\), it follows that the composition \(\hat{f}\) is an isomorphism  as required.
\end{proof}

The following is part of the main result of  \cite{R2}, specifically \cite[Theorem 1.1]{R3}.
\begin{theorem}\label{rickard thm}
Let \(\Gamma\) and $\Lambda$ be module finite \(R\)-algebras,  and \(X\)   a complex  of  \(\Gamma\)-\(\Lambda\) bimodules. The following are equivalent.
\begin{enumerate}
\item   \(-\otimes^{\bf{L}}_{\Gamma} X \colon \D(\Mod \Gamma) \to \D(\Mod \Lambda)\) is an equivalence.
\item \(-\otimes^{\bf{L}}_{\Gamma} X\) induces an equivalence  \(\Kb(\smallproj\Gamma) \to \Kb(\smallproj \Lambda).\)
\item \(-\otimes^{\bf{L}}_{\Gamma} X\) induces an equivalence   \(\Db(\smallmod \Gamma ) \to \Db(\smallmod \Lambda).\) 
\item  \(X\)  is a two sided tilting complex.
\end{enumerate}
\end{theorem}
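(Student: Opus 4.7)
The plan is to establish the cycle $(4) \Rightarrow (1) \Rightarrow (2) \Rightarrow (3) \Rightarrow (4)$, which is the content of Rickard's derived Morita theorem. For $(4) \Rightarrow (1)$, the two defining conditions of a two-sided tilting complex---that $X_\Lambda$ is a one-sided tilting complex and that the natural map $\Gamma \to \End_{\D(\Lambda)}(X)$ is an isomorphism---are exactly the input required for Rickard's classical one-sided tilting theorem: the functor $\RHom_\Lambda(X,-)$ is then an equivalence $\D(\Mod \Lambda) \to \D(\Mod \Gamma)$, whose quasi-inverse, via adjunction, is $-\otimes^{\bf L}_\Gamma X$.

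Next, $(1) \Rightarrow (2)$ follows because any triangulated equivalence of unbounded derived categories of modules preserves compact objects, and for a ring the compact objects of $\D(\Mod)$ coincide with $\Kb(\smallproj)$. The implication $(2) \Rightarrow (3)$ is obtained by extending the equivalence from perfect complexes to bounded complexes with finitely generated cohomology; the key point is that, under the module-finite/Noetherian hypothesis, $\Db(\smallmod)$ admits an intrinsic description as a thick subcategory built from $\Kb(\smallproj)$ together with the standard $t$-structure truncations, so an exact equivalence on $\Kb(\smallproj)$ extends uniquely and functorially. For $(3) \Rightarrow (4)$, note that $X \simeq \Gamma \otimes^{\bf L}_\Gamma X$ is the image of $\Gamma$ under the equivalence. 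Hence $X_\Lambda$ lies in $\Kb(\smallproj\Lambda)$, satisfies $\Ext^t_\Lambda(X,X) = 0$ for $t \neq 0$ (transporting $\Ext^t_\Gamma(\Gamma,\Gamma) = 0$), and is a generator because equivalences preserve generators; this gives the one-sided tilting condition of Definition \ref{ 01 0f 21 march 2023}. The endomorphism condition $\Gamma \xrightarrow{\sim} \End_{\D(\Lambda)}(X)$ then follows from the fact that any equivalence induces isomorphisms of endomorphism rings of objects.

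The main obstacle is the bimodule compatibility throughout. Because $X$ is given as a complex of $\Gamma$-$\Lambda$ bimodules, at each implication one must verify not just that an abstract equivalence exists, but that the functor in question is canonically isomorphic to $-\otimes^{\bf L}_\Gamma X$ via the prescribed bimodule structure---and correspondingly that the map in (4)(1), namely $\Gamma \to \End_{\D(\Lambda)}(X)$, is the natural one induced by the left $\Gamma$-action on $X$. Keeping this naturality through the cycle, rather than producing an equivalence that only happens to be isomorphic to $-\otimes^{\bf L}_\Gamma X$ abstractly, is the technical heart of Rickard's theorem; in practice this is arranged by producing the equivalence as $-\otimes^{\bf L}_\Gamma X$ from the outset in $(4)\Rightarrow(1)$ and then propagating that identification around the cycle.
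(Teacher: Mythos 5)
The paper does not prove Theorem~\ref{rickard thm} at all: it cites it as a known result, explicitly attributed to Rickard, ``part of the main result of \cite{R2}, specifically \cite[Theorem 1.1]{R3}.'' Your proposal is therefore a sketch of a theorem the authors treat as a black box, so there is no proof in the paper to compare against; the relevant comparison is with Rickard's own arguments.

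As a sketch of Rickard's theorem your cycle $(4)\Rightarrow(1)\Rightarrow(2)\Rightarrow(3)\Rightarrow(4)$ is reasonable, and you correctly identify that the genuine difficulty is not the existence of some abstract equivalence but the naturality: one must carry the specific bimodule realization $-\otimes^{\bf L}_\Gamma X$ around the cycle, which is precisely the content of \cite{R3} as opposed to the purely one-sided \cite{R2}. A couple of steps are stated more loosely than they can be proved. In $(2)\Rightarrow(3)$ the claim that $\Db(\smallmod)$ is ``a thick subcategory built from $\Kb(\smallproj)$ together with the standard $t$-structure truncations'' is not an off-the-shelf intrinsic characterization; what one actually uses is that $X=\Gamma\otimes^{\bf L}_\Gamma X$ lies in $\Kb(\smallproj\Lambda)\subseteq\Db(\smallmod\Lambda)$ by hypothesis~(2), and then a d\'evissage over the (bounded, finitely generated) cohomology of a given $M\in\Db(\smallmod\Gamma)$, using Noetherianity of $R$ and module-finiteness, shows $M\otimes^{\bf L}_\Gamma X\in\Db(\smallmod\Lambda)$. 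In $(3)\Rightarrow(4)$, concluding $X_\Lambda\in\Kb(\smallproj\Lambda)$ from $X\in\Db(\smallmod\Lambda)$ needs the fact (which the paper itself quotes from \cite[Prop.~2.18]{IW4}) that for module-finite algebras perfect equals homologically finite: $\Gamma$ is homologically finite in $\Db(\smallmod\Gamma)$, equivalences preserve this property, hence $X_\Lambda$ is perfect. You invoke ``equivalences preserve generators'' for the one-sided tilting condition, but the generation statement you need is in $\D(\Mod\Lambda)$, not $\Db(\smallmod\Lambda)$; this follows because $X$ generates $\Kb(\smallproj\Lambda)$ as a thick subcategory (being the image of $\Gamma$, which generates $\Kb(\smallproj\Gamma)$), and $\Kb(\smallproj\Lambda)$ generates $\D(\Mod\Lambda)$, but that chain of reasoning should be spelled out rather than asserted. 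None of these are fatal, but they are exactly the points Rickard's proofs spend their effort on, so a fully rigorous write-up would have to address them.
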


\begin{cor}\label{twist is an equiv}
For any  choice  \((\upalpha, i)\)   as in Setup \textnormal{\ref{bi setup}}, \({}_{\Lambda}C_{\Lambda}\)  in  \(\S\ref{twistfun}\)  is a two-sided tilting complex,  giving  rise to the autoequivalence \(\TW_{\upalpha,i}\)  of \(\Db (\smallmod  \Lambda).\) 
\end{cor}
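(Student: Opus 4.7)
The plan is to package together the two major inputs established just above: Theorem \ref{twosided tilt thm}, which already does the genuine work of verifying that ${}_{\Lambda}C_{\Lambda}$ is a two-sided tilting complex, and Rickard's Theorem \ref{rickard thm}, which upgrades being a two-sided tilting complex to being an equivalence of derived categories. First I would invoke Theorem \ref{twosided tilt thm} directly to conclude that ${}_{\Lambda}C_{\Lambda}$ is a two-sided tilting complex; this is the step where the real content lies, but it is already done.

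Next, I would apply Theorem \ref{rickard thm} (4)$\Rightarrow$(1),(3) to obtain that $-\otimes^{\bf{L}}_{\Lambda}{}_{\Lambda}C_{\Lambda}$ is an equivalence $\Db(\smallmod \Lambda)\to \Db(\smallmod \Lambda)$, and hence that its right adjoint $\TW_{\upalpha,i}=\RHom_{\Lambda}({}_{\Lambda}C_{\Lambda},-)$ is an autoequivalence as well. Strictly speaking, Rickard's theorem is stated for $-\otimes^{\bf{L}}C$, so I would observe that $\TW_{\upalpha,i}$ is the right adjoint (equivalently the quasi-inverse) of $-\otimes^{\bf{L}}_{\Lambda}{}_{\Lambda}C_{\Lambda}$, and therefore is itself an autoequivalence; this uses nothing more than the fact that adjoints of equivalences are equivalences.

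Since all the heavy lifting — checking $C_{\Lambda}\in\Kb(\smallproj\Lambda)$ in Proposition \ref{02 0f 23 april 2023}, the vanishing of self-extensions via Lemma \ref{01 of 20 march 2023}, and the bimodule isomorphism $\Lambda\xrightarrow{\sim}\End_{\D(\Lambda)}(C,C)$ via checking locally and completing in Theorem \ref{twosided tilt thm} — has been carried out, there is no real obstacle left. The only mild subtlety is to be clean about the direction of the functor: one should note that the commutative diagram in Proposition \ref{02 0f 06 march 2023}\eqref{02 0f 06 march 2023 two} already implicitly certifies that $\TW_{\upalpha,i}$ intertwines with a composition of equivalences after applying $\mathrm{F}$, which provides independent confirmation that $\TW_{\upalpha,i}$ is an equivalence. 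I would flag that this recovers the inverse claim referenced in Remark \ref{01 of 21 feb 2023}, namely that $\TW^{*}_{\upalpha,i}$ is not merely the left adjoint but in fact the inverse of $\TW_{\upalpha,i}$.
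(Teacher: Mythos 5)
Your proposal follows exactly the same route as the paper: cite Theorem \ref{twosided tilt thm} for the two-sided tilting property and then feed this into Rickard's Theorem \ref{rickard thm} to obtain the autoequivalence. The additional remarks you make (passing between $-\otimes^{\bf{L}}_{\Lambda}C$ and its right adjoint $\RHom_{\Lambda}(C,-)$, and the consequence that $\TW^{*}_{\upalpha,i}$ is the genuine inverse) are correct and align with what the paper records in Remark \ref{inverse of twist} immediately afterwards.
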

\begin{proof}
 \({}_{\Lambda}C_{\Lambda}\) as constructed in  Proposition \ref{01 of  06 December 2022}  is a two sided tilting complex on \(\Lambda\) by Theorem \ref{twosided tilt thm}. Thus \({}_{\Lambda}C_{\Lambda}\) induces  a derived autoequivalence  on the  triangulated category \(\Db (\smallmod  \Lambda),\) by applying Theorem \ref{rickard thm}.
\end{proof}
\subsection{Corollaries}
The following are immediate.
\begin{remark}\label{inverse of twist}
\(\TW_{\upalpha,i}^{-1}=\TW_{\upalpha,i}^{*},\) since the adjoint  to an equivalence is necessarily the  inverse.
\end{remark}
\begin{notation}
As in the proof of Theorem \ref{twosided tilt thm}, set {\scriptsize \[  \mathrm{G}_{\upalpha,i}\colonequals \D(\hat{\Lambda})\xrightarrow{\mathbb{F}^{-1}} \D(\Anohat) \xrightarrow{\Upphi_{\upalpha}^{-1}} \D(\Bnohat)  \xrightarrow{\Upphi_{i}^{2}} \D(\Bnohat) \xrightarrow{\Upphi_{\upalpha}} \D(\Anohat) \xrightarrow{\mathbb{F}} \D(\hat{\Lambda}). \]}
\end{notation}
\begin{lemma}\label{inversecommutative}
There are commutative diagrams 
	\[
\begin{array}{c}
\begin{tikzpicture}
\node at (0,0) {$
\begin{tikzpicture}
\node (a1) at (0,0) {$\D(\hat{\Lambda})$};
\node (a2) at (2,0) {$\D(\hat{\Lambda})$};
\node (b1) at (0,-1.5) {$\D(\Lambda)$};
\node (b2) at (2,-1.5) {$\D(\Lambda)$};
\draw[->] (a1) to node[above] {$\scriptstyle  \mathrm{G}_{\upalpha,i}$} (a2);
\draw[->] (b1) to node[above] {$\scriptstyle \TW_{\upalpha,i}$} (b2);
\draw[->] (a1) to node[right] {$\scriptstyle \mathrm{F}$} (b1);
\draw[->] (a2) to node[left] {$\scriptstyle \mathrm{F} $} (b2);						
\end{tikzpicture}
$}; \qquad 
\node at (3,0) {$
\begin{tikzpicture}
\node (a1) at (0,0) {$\D(\hat{\Lambda})$};
\node (a2) at (2,0) {$\D(\hat{\Lambda})$};
\node (b1) at (0,-1.5) {$\D(\Lambda)$};
\node (b2) at (2,-1.5) {$\D(\Lambda)$};
\draw[->] (a1) to node[above] {$\scriptstyle  \mathrm{G}_{\upalpha,i}^{-1}$} (a2);
\draw[->] (b1) to node[above] {$ \scriptstyle \TW_{\upalpha,i}^{-1}$} (b2);
\draw[->] (a1) to node[right] {$\scriptstyle \mathrm{F}$} (b1);
\draw[->] (a2) to node[left] {$\scriptstyle \mathrm{F} $} (b2);						
\end{tikzpicture}
$}; \qquad
\node at (6,0) {$
\begin{tikzpicture}
\node (a1) at (0,0) {$\D(\hat{\Lambda})$};
\node (a2) at (2,0) {$\D(\hat{\Lambda})$};
\node (b1) at (0,-1.5) {$\D(\Lambda)$};
\node (b2) at (2,-1.5) {$\D(\Lambda)$};
\draw[->] (a1) to node[above] {$\scriptstyle  \mathrm{G}_{\upalpha,i}^{-1}$} (a2);
\draw[->] (b1) to node[above] {$\scriptstyle\TW_{\upalpha,i}^{-1}$} (b2);
\draw[->] (b1) to node[right] {$\scriptstyle \mathrm{F}^{\LA}$} (a1);
\draw[->] (b2) to node[left] {$\scriptstyle \mathrm{F}^{\LA} $} (a2);					
\end{tikzpicture}			
	$};\qquad 
\node at (9,0) {$
\begin{tikzpicture}
\node (a1) at (0,0) {$\D(\hat{\Lambda})$};
\node (a2) at (2,0) {$\D(\hat{\Lambda})$};
\node (b1) at (0,-1.5) {$\D(\Lambda)$};
\node (b2) at (2,-1.5) {$\D(\Lambda)$};
\draw[->] (a1) to node[above] {$\scriptstyle  \mathrm{G}_{\upalpha,i}$} (a2);
\draw[->] (b1) to node[above] {$\scriptstyle\TW_{\upalpha,i}$} (b2);
\draw[->] (b1) to node[right] {$\scriptstyle \mathrm{F}^{\LA}$} (a1);
\draw[->] (b2) to node[left] {$\scriptstyle \mathrm{F}^{\LA} $} (a2);					
\end{tikzpicture}			
	$};
\end{tikzpicture}
\end{array}
\]
\end{lemma}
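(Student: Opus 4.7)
The plan is to deduce all four squares from results already in hand, together with the observation that every horizontal functor appearing in Lemma \ref{inversecommutative} is an equivalence. For the horizontal arrows this is immediate: $\TW_{\upalpha,i}$ is an autoequivalence by Corollary \ref{twist is an equiv}, and $\mathrm{G}_{\upalpha,i}$, being a composition of mutation functors and Morita equivalences, is an equivalence by construction.

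First I would handle the leftmost square. By definition of $\mathrm{G}_{\upalpha,i}$, the top row of the diagram in Proposition \ref{02 0f 06 march 2023}\eqref{02 0f 06 march 2023 two} is precisely $\mathrm{G}_{\upalpha,i}$, so that proposition directly asserts the functorial isomorphism $\TW_{\upalpha,i}\circ \mathrm{F}\cong \mathrm{F}\circ \mathrm{G}_{\upalpha,i}$, which is exactly the first square.

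Next, I turn to the third square. All factors in the top row of the diagram in Corollary \ref{01 0f 02 march 2023} agree with those defining $\mathrm{G}_{\upalpha,i}$ except that $\Upphi_i^{2}$ is replaced by $\Upphi_i^{-2}$; since the remaining factors occur in inverse pairs around $\Upphi_i^{\pm 2}$, the top row is the inverse composition, namely $\mathrm{G}_{\upalpha,i}^{-1}$. Combining this identification with Remark \ref{inverse of twist}, which gives $\TW_{\upalpha,i}^{*}=\TW_{\upalpha,i}^{-1}$, Corollary \ref{01 0f 02 march 2023} yields the required isomorphism $\mathrm{G}_{\upalpha,i}^{-1}\circ \mathrm{F}^{\LA}\cong \mathrm{F}^{\LA}\circ \TW_{\upalpha,i}^{-1}$, which is the third square.

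Finally, the second and fourth squares are formal consequences. Composing the relation from the first square on the left with $\TW_{\upalpha,i}^{-1}$ and on the right with $\mathrm{G}_{\upalpha,i}^{-1}$ rearranges it to $\TW_{\upalpha,i}^{-1}\circ \mathrm{F}\cong \mathrm{F}\circ \mathrm{G}_{\upalpha,i}^{-1}$, which is the second square; the fourth square is deduced from the third in the same way. I do not foresee any serious obstacle here, since the lemma is essentially a repackaging of Proposition \ref{02 0f 06 march 2023} and Corollary \ref{01 0f 02 march 2023}; the only point that requires care is the bookkeeping check that the top rows genuinely assemble into $\mathrm{G}_{\upalpha,i}$ and $\mathrm{G}_{\upalpha,i}^{-1}$ respectively, which reduces to tracking the exponent on $\Upphi_i$.
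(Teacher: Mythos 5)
Your argument is correct and follows the paper's proof essentially verbatim: the first and third squares are read off directly from Proposition \ref{02 0f 06 march 2023}\eqref{02 0f 06 march 2023 two} and Corollary \ref{01 0f 02 march 2023} (with the same identification of the top row of the latter as $\mathrm{G}_{\upalpha,i}^{-1}$, using Remark \ref{inverse of twist}), while the second and fourth are obtained from them by composing with $\TW_{\upalpha,i}^{-1}$ and $\mathrm{G}_{\upalpha,i}^{-1}$. Your bookkeeping note on the exponent of $\Upphi_i$ is exactly the point the paper leaves implicit, so the two proofs are the same.
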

\begin{proof}
The first and third diagrams are Proposition \ref{02 0f 06 march 2023}\eqref{02 0f 06 march 2023 two} and Corollary \ref{01 0f 02 march 2023} respectively.  The second follows  from the first since \(\mathrm{G}_{\upalpha,i}\) and \(\TW_{\upalpha, i}\) are invertible. Indeed, by Proposition \ref{02 0f 06 march 2023}\eqref{02 0f 06 march 2023 two}   \begin{equation}\label{many commute}
\TW_{\upalpha,i} \circ \mathrm{F} \circ \mathrm{G}_{\upalpha,i}^{-1}= \mathrm{F} \circ  \mathrm{G}_{\upalpha,i} \circ  \mathrm{G}_{\upalpha,i}^{-1}
=\mathrm{F}
\end{equation}
thus  pre-composing \eqref{many commute} with \(\TW_{\upalpha,i}^{-1},\) gives the desired result \(\mathrm{F}\circ \mathrm{G}_{\upalpha,i}^{-1} = \TW_{\upalpha,i}^{-1} \circ \mathrm{F}.\) The fourth follows from the third similarly.
\end{proof}
\section{Group actions on \texorpdfstring{$ \Db(\coh U)$}{D} } 
In this section we study the twist functor defined in $\S \ref{twistfun}$ from a geometric viewpoint. We use this  to produce a group action on the  derived category of coherent sheaves of \(U,\) where the irreducible rational curves may or may not be individually floppable. 

\subsection{Geometric Twist}\label{Twist on U} Let \(f \colon U\to \Spec R \)   be an algebraic flopping contraction  as in  \ref{01 of 23 april 2023}. Now, fixing notation as in $\S$\ref{geometric setup},    there is a tilting bundle \(\mathcal{V}=\mathcal{O}_{U} \oplus \mathcal{N}\) on \(U\) inducing a derived equivalence 
\begin{eqnarray}\label{Geometric cone}
\begin{array}{c}
\begin{tikzpicture}[looseness=1,bend angle=15]
\node (a1) at (0,0) {$\Db(\coh U)$};
\node (a2) at (8,0) {$\Db(\smallmod \Lambda )$};
\draw[->] (a1) -- node[above] {$\scriptstyle \Upomega \colonequals\RHom_{U}(\mathcal{V},-)$} (a2);
\draw[->,bend right] (a2) to node[above] {$\scriptstyle \Upomega^{\LA} \colonequals -\otimes^{\bf{L}}_{\Lambda} \mathcal{V} $} (a1);
\draw[->,bend left] (a2) to node[below] {$\scriptstyle \Upomega^{\RA}\colonequals -\otimes^{\bf{L}}_{\Lambda} \mathcal{V}$} (a1);
\end{tikzpicture}
\end{array}
\end{eqnarray}
\begin{definition}
\begin{enumerate}
	\item The geometric twist  functor \(\Geo_{\upalpha,i}\) is defined to be the  composition 
 \[\begin{array}{c}
 \begin{tikzpicture}
 	\node (a1) at (0,0) {$\Db(\coh U)$};
 	\node (a2) at (4,0) {$\Db(\smallmod \Lambda)$};
 	\node (b1) at (0,-1.5) {$\D(\coh U)$};
 	\node (b2) at (4,-1.5) {$\D(\smallmod \Lambda)$};
 	\draw[->] (a1) to node[above] {$\scriptstyle  \RHom_{U}(\mathcal{V},-)$} (a2);
 	\draw[->] (b2) to node[above] {$ \scriptstyle -\otimes^{\bf{L}}_{\Lambda} \mathcal{V}$} (b1);
 	\draw[->,dashed] (a1) to node[left] {$\scriptstyle \Geo_{\upalpha,i}$} (b1);
 	\draw[->] (a2) to node[right] {$\scriptstyle \TW_{\upalpha,i}$} (b2);						
 \end{tikzpicture}
 \end{array}\]
 \item The inverse geometric twist functor \(\Geo^{*}_{\upalpha,i}\) can  be defined similarly, by composing with \(\TW^{*}_{\upalpha,i}.\)
\end{enumerate}
\end{definition}
\begin{remark}
Being  compositions of equivalences, \(\Geo_{\upalpha,i}\)  and \(\Geo^{*}_{\upalpha,i}\) are also equivalences. Moreover \(\Geo^{*}_{\upalpha,i}\cong \Geo_{\upalpha,i}^{-1}.\)
\end{remark}

As the natural way to describe  functors between   derived categories of coherent sheaves of varieties, we now describe the functors  \(\Geo_{\upalpha,i}\) and \(\Geo^{*}_{\upalpha,i}\) as  Fourier–Mukai transforms.

Since both  \(\mathcal{V} \) and \(\mathcal{V}^{*}\) are  tilting bundles on \(U,\) adopting the commutative diagram in  \cite[6.K]{DW1} 
 \[\begin{array}{c}
\begin{tikzpicture}
\node (a1) at (0,0) {$U\times U$};
\node (a2) at (3,0) {$U$};
\node (b1) at (0,-1.5) {$U$};
\node (b2) at (3,-1.5) {$\mathrm{pt}$};
\draw[->] (a1) to node[above] {$\scriptstyle p_{1}$} (a2);
\draw[->] (b1) to node[below] {$ \scriptstyle \uppi_{U}$} (b2);
\draw[->] (a1) to node[left] {$ \scriptstyle p_{2}$} (b1);
\draw[->] (a2) to node[right] {$\scriptstyle \uppi_{U}$} (b2);	
\draw[->] (a1) to node[above] {$\scriptstyle \uppi_{U\times U}$} (b2);						
\end{tikzpicture}
\end{array},\] set \(\scrV \XBox \scrV^{*}=p_{1}^{*}\scrV \otimes^{\bf{L}}_{\scrO_{U\times U}} p_{2}^{*}\scrV^{*}.\) Then as in \cite{BH} we recall that 
\[\End_{U\times U}(\scrV \XBox \scrV^{*})= \Lambda\otimes_{\mathbb{C}} \Lambda^{\mathrm{op}}, \] the enveloping algebra of \(\Lambda\). Thus by basic theory there exists a derived equivalence 
\[\begin{array}{c}
\begin{tikzcd}[ column sep=3cm, cramped, trim left]
& \Db (\coh U\times U)  \ar[r, shift left,"{\RHom_{U\times U }(\scrV\XBox \scrV^{*},-)}",description]
& \Db(\smallmod  \Lambda\otimes_{\mathbb{C}} \Lambda^{\mathrm{op}} ) \ar[l,shift left, "- \otimes^{\bf{L}}_{\Lambda\otimes_{\mathbb{C}}\Lambda^{\mathrm{op}}} \scrV \boxtimes\scrV^{* }"]
\end{tikzcd}  
\end{array}
\]

Now, following what is laid out on \cite[p.36 and p.37]{DW1} and recalling that \({}_{\Lambda}C_{\Lambda}\) is a bimodule complex on \(\Lambda, \) we define the following

\begin{definition} Consider  geometric twist kernel 
\[ \scrW \colonequals \RHom_{\Lambda}(C,\Lambda) \otimes^{\bf{L}}_{\Lambda\otimes_{\mathbb{C}}\Lambda^{\mathrm{op}}} \scrV  \boxtimes \scrV^{* },\]
and  the inverse geometric twist kernel
\[ \scrW^{*} \colonequals C \otimes^{\bf{L}}_{\Lambda\otimes_{\mathbb{C}}\Lambda^{\mathrm{op}}} \scrV \boxtimes \scrV^{*}.\]
\end{definition}

 The names are justified by following Lemma.
 \begin{lemma}
\( \Geo_{\upalpha,i} \cong \mathrm{FM}(\scrW)\) and \( \Geo_{\upalpha,i}^{*} \cong \mathrm{FM}(\scrW^{*}).\)
 \end{lemma}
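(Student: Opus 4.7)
The plan is to reduce both statements to a standard bimodule-to-FM-kernel translation, closely following the template in \cite[\S6.K]{DW1}. First I would rewrite both $\TW_{\upalpha,i}$ and $\TW^{*}_{\upalpha,i}$ as tensor-with-bimodule functors. By definition $\TW^{*}_{\upalpha,i} = -\otimes^{\bf{L}}_{\Lambda} C$, and since ${}_{\Lambda}C_{\Lambda}$ is a two-sided tilting complex by Corollary \ref{twist is an equiv}, its inverse $\TW_{\upalpha,i} = \RHom_{\Lambda}({}_{\Lambda}C,-)$ is naturally isomorphic to $-\otimes^{\bf{L}}_{\Lambda}\RHom_{\Lambda}(C,\Lambda)$, where the right-hand factor carries its canonical $\Lambda$-$\Lambda$ bimodule structure (this is the standard formula for the inverse of a two-sided tilt, and uses that $C_{\Lambda}$ is perfect by Proposition \ref{02 0f 23 april 2023}).

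The key input is the following translation lemma: for any complex of $\Lambda$-$\Lambda$ bimodules $M$, the endofunctor of $\Db(\coh U)$ given by
\[
E \longmapsto \RHom_{U}(\scrV,E)\otimes^{\bf{L}}_{\Lambda} M \otimes^{\bf{L}}_{\Lambda} \scrV
\]
is naturally isomorphic to $\mathrm{FM}\bigl(M\otimes^{\bf{L}}_{\Lambda\otimes_{\mathbb{C}}\Lambda^{\mathrm{op}}} \scrV\boxtimes \scrV^{*}\bigr)$. This is a routine calculation using the projection formula for $p_1,p_2$ together with the fact that $\scrV\boxtimes \scrV^{*}$ represents the enveloping algebra $\Lambda\otimes_{\mathbb{C}}\Lambda^{\mathrm{op}}$ (i.e.\ the tilting equivalence $\Upomega$ sits in the Fourier--Mukai framework with kernel $\scrV\boxtimes \scrV^{*}$). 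The argument is carried out in detail for exactly this setting in \cite[\S6.K]{DW1} and \cite{BH}, and transfers to the algebraic setup of \ref{01 of 23 april 2023} unchanged because $\scrV$ is locally free and $C$ is bounded and perfect, so the projection formula and Künneth apply.

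With the translation lemma in hand, the two claims are immediate. Unwinding the definition gives
\[
\Geo^{*}_{\upalpha,i}(E) \;=\; \Upomega^{\LA}\bigl(\TW^{*}_{\upalpha,i}(\Upomega E)\bigr) \;=\; \RHom_{U}(\scrV,E)\otimes^{\bf{L}}_{\Lambda} C \otimes^{\bf{L}}_{\Lambda}\scrV,
\]
which is exactly the shape of the lemma with $M=C$, so $\Geo^{*}_{\upalpha,i}\cong \mathrm{FM}(\scrW^{*})$. Applying the lemma instead with $M=\RHom_{\Lambda}(C,\Lambda)$ and using Step~1 yields $\Geo_{\upalpha,i}\cong \mathrm{FM}(\scrW)$.

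The main obstacle is the bimodule bookkeeping: one must check that the left/right $\Lambda$-structures on $\RHom_{\Lambda}(C,\Lambda)$ and on $\scrV\boxtimes\scrV^{*}$ line up so that the tensor over $\Lambda\otimes_{\mathbb{C}}\Lambda^{\mathrm{op}}$ in the definition of $\scrW$ genuinely computes the composition $\Upomega^{\LA}\circ (-\otimes_{\Lambda} \RHom_{\Lambda}(C,\Lambda))\circ \Upomega$. Beyond that, and the perfectness inputs noted above, the argument is a diagram chase and I would not expect any new ideas to be needed.
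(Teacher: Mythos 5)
Your proof is correct and is essentially a reconstruction of the argument in \cite[Lemma 6.16]{DW1}, which is exactly what the paper cites (the paper's entire proof is ``word for word \cite[Lemma 6.16]{DW1}''). The only mild caution is that the rewriting of $\RHom_{\Lambda}(C,-)$ as $-\otimes^{\bf{L}}_{\Lambda}\RHom_{\Lambda}(C,\Lambda)$ is indeed the step that uses perfectness of $C_{\Lambda}$ (Proposition \ref{02 0f 23 april 2023}), and this is why the paper defines $\scrW$ via $\RHom_{\Lambda}(C,\Lambda)$ rather than $C$; you identify this correctly, so no gap.
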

 \begin{proof}
 The proof is word for word \cite[Lemma 6.16]{DW1}.
 \end{proof}

Now, suppose that the flopping locus is a chain of curves \(C,\)   consider the open set \(W=U\setminus C\) and write \(j\colon W\to U \) for  the inclusion. The contravariant functor  \(j^{*}\colon  U \to W\) is  called the restriction  on \(U.\) 
\begin{lemma} \label{Geotwist and j star}
	For any \((\upalpha,i),\) then \(j^{*}\circ \Geo_{\upalpha,i} \cong j^{*}.\) 	
\end{lemma}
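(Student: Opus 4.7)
The plan is to show that $\Geo_{\upalpha,i}$ becomes naturally isomorphic to the identity after restriction to the complement $W$ of the flopping locus, with the key input being that the bimodule $C$ localizes to $\Lambda$ away from the singular point $\m$.

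First, I would revisit the defining triangle of $C$ from Proposition \ref{01 of  06 December 2022},
\[
C \to {}_{\Lambda}\Lambda_{\Lambda} \to {}_{\Lambda}\hat{\Lambda}\otimes^{\mathbf{L}}_{\hat{\Lambda}}\bigl(\mathbb{M}\otimes^{\mathbf{L}}_{\Aphat}\uptau_{\upalpha}\otimes^{\mathbf{L}}_{\Bphat}Z_{\upalpha,i}\bigr)\otimes^{\mathbf{L}}_{\hat{\Lambda}}\hat{\Lambda}_{\Lambda}\to
\]
in the derived category of $\Lambda$-$\Lambda$ bimodules. The third term factors through $\hat{\Lambda}$ on both sides, and by Claim \ref{01 0f 20th november 2022} (together with the filtration of $\Bnohat_{i}$ by the simple $S_{i}$), its cohomology lies in $\fl\hat{\Lambda}$ from each side, hence has support only at $\m$ as an $R$-$R$ bimodule. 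This is a symmetric upgrade of the right-module calculation already performed in the proof of Lemma \ref{01 of 20 march 2023}. Consequently, localizing from either side at any $\mathfrak{p}\neq\m$ kills the third term and yields a bimodule isomorphism $C_{\mathfrak{p}}\cong \Lambda_{\mathfrak{p}}$.

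Second, since $C$ is perfect by Proposition \ref{02 0f 23 april 2023}, for any $X\in\Db(\smallmod\Lambda)$ and $\mathfrak{p}\neq\m$ there is a natural isomorphism
\[
\TW_{\upalpha,i}(X)_{\mathfrak{p}}=\RHom_{\Lambda}(C,X)_{\mathfrak{p}}\cong \RHom_{\Lambda_{\mathfrak{p}}}(C_{\mathfrak{p}}, X_{\mathfrak{p}})\cong \RHom_{\Lambda_{\mathfrak{p}}}(\Lambda_{\mathfrak{p}}, X_{\mathfrak{p}})=X_{\mathfrak{p}}.
\]
Thus $\TW_{\upalpha,i}$ is naturally isomorphic to the identity after localization away from $\m$.

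Third, I would transport this to the geometric side. Because $f$ is an isomorphism away from $\m$, the open $W$ is identified with $\Spec R\setminus\{\m\}$, and $\mathcal{V}|_{W}$ is a tilting bundle whose endomorphism algebra is the corresponding localization of $\Lambda$; consequently $j^{*}$ on $U$ corresponds, under $\Upomega$, to the evident localization of $\Lambda$-modules to primes $\mathfrak{p}\neq\m$. Applying this compatibility to $\Geo_{\upalpha,i}(\mathcal{F})=\Upomega^{-1}\TW_{\upalpha,i}\Upomega(\mathcal{F})$ and using the second step gives $j^{*}\Geo_{\upalpha,i}(\mathcal{F})\cong j^{*}\mathcal{F}$, as required. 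The main obstacle is the bimodule strengthening in the first step: only the right-module version of the support claim is proved in the proof of Lemma \ref{01 of 20 march 2023}, but the left-module version is symmetric and follows by the same argument applied to the left $\Lambda$-action on $\mathbb{M}\otimes\uptau_{\upalpha}\otimes Z_{\upalpha,i}$.
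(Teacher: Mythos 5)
Your proof is correct, but it takes a genuinely different route from the paper's. You work entirely on the algebraic side: you localize the defining triangle for $C$, observe that the third term is supported at $\m$ so that $C_{\p}\cong\Lambda_{\p}$ for $\p\neq\m$, deduce that $\TW_{\upalpha,i}$ is naturally the identity after localization away from $\m$, and only at the very end transport this across $\Upomega$ to conclude that the cone of $\scrF\to\Geo_{\upalpha,i}(\scrF)$ is supported on $C$. The paper instead stays on the geometric side from the start: it invokes the functorial triangle from \cite[Proposition 7.14]{DW1},
\[
\RHom_{U}(\scrE_{\upalpha,i},-)\otimes^{\bf L}_{\Bnohat_{i}}\scrE_{\upalpha,i}\to\Id\to\Geo_{\upalpha,i}\to,
\]
with $\scrE_{\upalpha,i}\colonequals\mathbb{F}\Upphi_{\upalpha}(\Bnohat_{i})\otimes^{\bf L}_{\Lambda}\scrV$, and observes directly via Claim~\ref{01 0f 20th november 2022} that $\scrE_{\upalpha,i}$ is supported on $C$, so $j^{*}\scrE_{\upalpha,i}=0$ and the triangle degenerates. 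Both arguments hinge on the same support fact at $\m$; the paper's is shorter because the cited source/sink triangle already lives in $\Db(\coh U)$, so there is no final translation step. Your version is slightly more self-contained (it avoids quoting the source/sink triangle from \cite{DW1}), at the cost of some care in the transport step: you should be explicit that $j^{*}$ corresponds to restriction to the open $\Spec R\setminus\{\m\}$ rather than localization at a single prime, and that the bimodule cone of $\Lambda\to\TW_{\upalpha,i}$-kernel being supported at $\m$ (which, since $R$ is central in $\Lambda$, is an unambiguous statement rather than requiring a genuinely new left-module computation) is what makes $j^{*}$ kill the cone of $\scrF\to\Geo_{\upalpha,i}(\scrF)$.
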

\begin{proof}
	Consider \(\mathbb{F}\Upphi_{\upalpha}(\Bnohat_{i}), \) which is a \(\fl  \Lambda\)-module and has finite projective dimension, and set \(\scrE_{\upalpha,i}\colonequals \mathbb{F}\Upphi_{\upalpha}(\Bnohat_{i})\otimes^{\bf{L}}_{\Lambda} \scrV \) as its image across the derived equivalence. By \cite[Proposition 7.14]{DW1} there exists an exact  triangle
	\begin{equation}\label{Geometric twist1} \RHom_{U}(\scrE_{\upalpha,i},-) \otimes^{\bf{L}}_{\Bnohat_{i}} \scrE_{\upalpha,i} \to \Id \to \Geo_{\upalpha,i} \to,  \end{equation} and applying  \(j^{*}\) to \eqref{Geometric twist1} yields \begin{equation}\label{Geometric twist2} \RHom_{U}(j^{*} \scrE_{\upalpha,i},-) \otimes^{\bf{L}}_{\Bnohat_{i}} j^{*}\scrE_{\upalpha,i} \to j^{*} \to j^{*}\Geo_{\upalpha,i} \to. \end{equation}We know that \(\scrE_{\upalpha,i}\) is supported on the curve \(C,\) indeed by Claim \ref{01 0f 20th november 2022}, \(\mathbb{F}\Upphi_{\upalpha}(\Bnohat_{i})\) is supported at only the maximal ideal \(\mathfrak{m}.\) Thus  \(j^{*}\scrE_{\upalpha,i}=0,\) so the result holds  by properties of triangles in \eqref{Geometric twist2}.
\end{proof}
\begin{cor}\label{coGeotwist and j star}
	For any \((\upalpha,i),\) then \(j^{*}\circ \Geo^{-1}_{\upalpha,i} \cong j^{*}.\) 
\end{cor}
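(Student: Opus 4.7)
The plan is to reduce Corollary \ref{coGeotwist and j star} directly to Lemma \ref{Geotwist and j star}. By Corollary \ref{twist is an equiv} together with Remark \ref{inverse of twist}, the algebraic twist $\TW_{\upalpha,i}$ is an autoequivalence of $\Db(\smallmod \Lambda)$ with two-sided inverse $\TW^{-1}_{\upalpha,i}=\TW^{*}_{\upalpha,i}$. Transporting across the tilting equivalence $\Upomega$ of \eqref{Geometric cone}, it follows at once that $\Geo_{\upalpha,i}$ is an autoequivalence of $\Db(\coh U)$ with two-sided inverse $\Geo^{-1}_{\upalpha,i}$.

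Granted this, the argument is a one-liner. By Lemma \ref{Geotwist and j star} we have $j^{*}\circ \Geo_{\upalpha,i}\cong j^{*}$ as functors $\Db(\coh U)\to \Db(\coh W)$. Post-composing both sides with $\Geo^{-1}_{\upalpha,i}$, and using that $\Geo_{\upalpha,i}\circ \Geo^{-1}_{\upalpha,i}\cong \Id$, gives the desired $j^{*}\cong j^{*}\circ \Geo^{-1}_{\upalpha,i}$.

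Alternatively, and perhaps more in the spirit of Lemma \ref{Geotwist and j star}, one can repeat that argument \emph{in situ} using the second functorial triangle of Proposition \ref{01 of  06 December 2022}(2), namely $\TW^{*}_{\upalpha,i}\to \Id \to \Uppsi_{\upalpha,i}\circ \Uppsi_{\upalpha,i}^{\LA}\to$. Transporting to $\Db(\coh U)$ via $\Upomega$ produces a triangle of the form $\Geo^{-1}_{\upalpha,i}\to \Id \to \mathscr{G}\to$, where $\mathscr{G}$ is the image under $\Upomega^{\LA}$ of a complex whose cohomology is built from finite length $\Lambda$-modules supported only at $\mathfrak{m}$ (by the same reasoning as Claim \ref{01 0f 20th november 2022} applied to the left adjoint). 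Hence $\mathscr{G}$ is set-theoretically supported on the exceptional chain $C$, so $j^{*}\mathscr{G}=0$, and applying the exact functor $j^{*}$ to the triangle then forces $j^{*}\cong j^{*}\circ \Geo^{-1}_{\upalpha,i}$.

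There is essentially no obstacle here: both routes use only material already assembled. The only mild point of care in the second route is checking that the relevant left-adjoint version of Claim \ref{01 0f 20th november 2022} still outputs a finite length module supported at $\mathfrak{m}$; but this is formal, since the atoms property used there (via torsion pairs as in \cite{HW}) is symmetric under taking inverses of the mutation functors. For brevity I would adopt the first route.
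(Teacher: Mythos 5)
Your first route is essentially the paper's own argument: both reduce directly to Lemma \ref{Geotwist and j star} and then compose on the right with $\Geo^{-1}_{\upalpha,i}$, using that $\Geo_{\upalpha,i}$ is an equivalence. The alternative triangle-based route you sketch is also sound but unnecessary here; the paper does not pursue it.
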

\begin{proof}
Since \(j^{*}\circ \Geo^{-1}_{\upalpha,i} \circ \Geo_{\upalpha,i}\cong j^{*} \stackrel{\ref{Geotwist and j star}}{\cong} j^{*} \circ  \Geo_{\upalpha,i},\)  the result follows by right composing with \(\Geo^{-1}_{\upalpha,i}.\)
\end{proof}
\begin{prop}\label{geo is identity}
 Let \(H = \Geo^{\pm}_{\upalpha_{1},i_{1}} \hdots \Geo^{\pm 1}_{\upalpha_{t},i_{t}}  \). If \(H\) is isomorphic to \(-\otimes \scrL\) where \(\scrL\) is a line bundle, then $\scrL\cong \scrO$. In particular, \(H \cong \Id\).
\end{prop}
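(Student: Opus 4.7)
The plan is to restrict everything to the complement of the flopping locus, where $H$ trivializes by Lemma \ref{Geotwist and j star} and Corollary \ref{coGeotwist and j star}, and then lift this triviality back to $U$ using that $U$ is normal and that the flopping curve has codimension two.

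First I would observe that, by applying Lemma \ref{Geotwist and j star} and Corollary \ref{coGeotwist and j star} one factor at a time to $H = \Geo^{\pm}_{\upalpha_{1},i_{1}} \circ \hdots \circ \Geo^{\pm 1}_{\upalpha_{t},i_{t}}$, one obtains $j^{*} \circ H \cong j^{*}$. On the other hand, since by hypothesis $H\cong -\otimes \scrL$, and since $j^{*}$ is monoidal (being restriction along an open immersion), one also has $j^{*}\circ H \cong (j^{*}-)\otimes_{\scrO_{W}} j^{*}\scrL$. Comparing these two descriptions, say by evaluating both on $\scrO_{U}$, should give $j^{*}\scrL \cong \scrO_{W}$.

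The remaining task is to show that a line bundle on $U$ whose restriction to $W = U\setminus C$ is trivial must itself be trivial. Here I would use that $U$ has Gorenstein terminal singularities, which are compound Du Val and hence in particular normal, together with the fact that $C$ is a curve in the $3$-fold $U$ and so has codimension $2$. Then for any reflexive coherent sheaf $\scrF$ on the normal scheme $U$ the natural map $\scrF \to j_{*}j^{*}\scrF$ is an isomorphism, because $\mathrm{codim}(C,U)\ge 2$. Applying this both to $\scrL$ and to $\scrO_{U}$ would yield
\[
\scrL \;\cong\; j_{*}j^{*}\scrL \;\cong\; j_{*}\scrO_{W}\;\cong\; \scrO_{U},
\]
where the middle isomorphism is induced by $j^{*}\scrL\cong \scrO_{W}$. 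Substituting into $H\cong -\otimes\scrL$ then gives $H\cong \Id$.

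The only nonformal ingredient is the reflexive-extension step in the second paragraph: one must invoke the correct Hartogs-type statement for line bundles (as opposed to arbitrary Weil divisorial sheaves) on the possibly singular normal scheme $U$. Both of the geometric hypotheses needed for this — normality of $U$ and codimension $\ge 2$ of the flopping locus — are built into Setup \ref{01 of 23 april 2023}, so once those are in place the statement follows immediately from the restriction lemmas already established in $\S\ref{Twist on U}$.
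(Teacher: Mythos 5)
Your proposal follows the same route as the paper: restrict to $W=U\setminus C$ using Lemma \ref{Geotwist and j star} and Corollary \ref{coGeotwist and j star} to deduce $j^{*}\scrL\cong\scrO_{W}$ (the paper does this by evaluating on $\scrO_{U}$, exactly as you suggest), then extend back across the codimension-two locus via the reflexive-sheaf Hartogs statement $\scrF\xrightarrow{\sim}j_{*}j^{*}\scrF$, which is precisely \cite[2.11]{S}. The argument is correct and essentially identical to the paper's.
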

\begin{proof}
First, by Lemma \ref{Geotwist and j star} and Corollary \ref{coGeotwist and j star},  \(j^{*}H(\scrO_{U})=j^{*}\scrO_{U}=\scrO_{W}.\) But \(H(\scrO_{U})\cong \scrO \otimes \scrL\) by assumption. The line bundle \(\scrL\) is reflexive and 
  \begin{align*}
	\scrL&=j_{*}j^{*}\scrL\tag{ by \cite[2.11]{S} since \(C\subseteq U\) has codimension two}\\
	&\cong j_{*}\scrO_{W}\tag{\(j^{*}\scrL \cong \scrO_{W}\)}\\
	&\cong j_{*}j^{*}\scrO_{U}\tag{\(j^{*}\scrO_{U} \cong \scrO_{W}\)}\\
	&\cong \scrO_{W}\tag{again by \cite[2.11]{S}}
\end{align*}
So \(\scrL \cong \scrO_{W},\) as required.
\end{proof}
  
Write \(\scrO_{x}\) for the skyscraper sheaf of a closed point \(x.\)
  \begin{lemma}\label{sky scrapper sheaf on curve}
 For any \((\upalpha,i)\) and any \(x \notin C, \Geo^{\pm 1}_{\upalpha,i}(\scrO_{x})\cong \scrO_{x}. \) 
  \end{lemma}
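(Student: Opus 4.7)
The plan is to exploit the triangle used in the proof of Lemma \ref{Geotwist and j star}, namely
\[
\RHom_{U}(\scrE_{\upalpha,i},-) \otimes^{\bf{L}}_{\Bnohat_{i}} \scrE_{\upalpha,i} \to \Id \to \Geo_{\upalpha,i} \to,
\]
and evaluate it at $\scrO_{x}$ for a closed point $x \notin C$. The key observation is that $\scrE_{\upalpha,i}$ is set-theoretically supported on $C$ (this was shown in the proof of Lemma \ref{Geotwist and j star}, using Claim \ref{01 0f 20th november 2022}), while $\scrO_{x}$ is supported at the single closed point $x$. Since $x \notin C$, these supports are disjoint.

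Next I would check that disjoint support forces $\RHom_{U}(\scrE_{\upalpha,i}, \scrO_{x}) = 0$. This is a standard local computation: $\Ext^{t}_{U}(\scrE_{\upalpha,i}, \scrO_{x})$ can be computed on stalks at $x$, but $(\scrE_{\upalpha,i})_{x} = 0$ because $x \notin \Supp(\scrE_{\upalpha,i}) \subseteq C$, hence every such $\Ext$ vanishes. Feeding this vanishing into the evaluated triangle
\[
\RHom_{U}(\scrE_{\upalpha,i}, \scrO_{x}) \otimes^{\bf{L}}_{\Bnohat_{i}} \scrE_{\upalpha,i} \to \scrO_{x} \to \Geo_{\upalpha,i}(\scrO_{x}) \to
\]
collapses the leftmost term, so the middle arrow is an isomorphism and $\Geo_{\upalpha,i}(\scrO_{x}) \cong \scrO_{x}$.

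For the inverse, I would simply apply $\Geo_{\upalpha,i}^{-1}$ to the isomorphism $\scrO_{x} \cong \Geo_{\upalpha,i}(\scrO_{x})$ just obtained, yielding $\Geo_{\upalpha,i}^{-1}(\scrO_{x}) \cong \scrO_{x}$; this uses only that $\Geo_{\upalpha,i}$ is an equivalence, which is recorded just after the definition of $\Geo_{\upalpha,i}$. No genuine obstacle is expected in this argument — everything is essentially a support-based vanishing plus one short triangle chase — the only subtlety is making sure one cites the correct support statement for $\scrE_{\upalpha,i}$ from Claim \ref{01 0f 20th november 2022} (which is phrased on the algebra side and then transferred through the tilting equivalence $\Upomega^{\LA} = -\otimes^{\bf{L}}_{\Lambda}\scrV$).
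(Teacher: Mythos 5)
Your proposal is correct and follows the paper's own proof essentially verbatim: apply the triangle \eqref{Geometric twist1} to $\scrO_{x}$, observe that $\RHom_{U}(\scrE_{\upalpha,i},\scrO_{x})=0$ because $\scrE_{\upalpha,i}$ is supported on $C$ while $x\notin C$, conclude $\Geo_{\upalpha,i}(\scrO_{x})\cong\scrO_{x}$, and then apply $\Geo_{\upalpha,i}^{-1}$ for the inverse statement. The extra detail you offer about the stalk computation and the transfer of the support statement through the tilting equivalence is a slight elaboration but does not change the route.
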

\begin{proof}
Applying \eqref{Geometric twist1} to \(\scrO_{x}\) yields 
	\begin{equation}\RHom_{U}(\scrE_{\upalpha,i},\scrO_{x}) \otimes^{\bf{L}}_{\Bnohat_{i}} \scrE_{\upalpha,i} \to \scrO_{x} \to \Geo_{\upalpha,i}(\scrO_{x}) \to,  \end{equation} 
\(\RHom_{U}(\scrE_{\upalpha,i},\scrO_{x})=0, \) since \(x \notin C\), thus by properties of triangles, \(\Geo_{\upalpha,i}(\scrO_{x})\cong \scrO_{x}.\) Applying  \(\Geo^{-1}_{\upalpha,i},\)   gives  \(\scrO_{x}\cong \Geo^{-1}_{\upalpha,i}(\scrO_{x}).\)
\end{proof}

\begin{prop}\label{taction on u}
	Suppose that \(f \colon  U\to  \Spec R \) is a flopping  contraction as in Setup \ref{01 of 23 april 2023},  then there exists  group homomorphisms
\[\begin{array}{c}
\begin{tikzpicture}
			\node (a1) at (0,0) {$  \uppi_{1}(\mathbb{C}^{n} \setminus \scrH)  $};
			\node (a2) at (0,-1.5) {$  \uppi_{1}(\mathbb{C}^{n} \setminus \scrH^{\aff}) $};
			\node (b1) at (3,0) {$ \mathrm{Auteq}\Db(\coh U)$};
			 \draw [->] (0, -0.2) -- (0, -1.3);  
			\draw[->] (a2) to node[below] {$ \scriptstyle g^{\aff}$} (b1);		
			\draw[->] (a1) to node[above] {$ \scriptstyle g$} (b1);				
		\end{tikzpicture}
	\end{array}\] 
\end{prop}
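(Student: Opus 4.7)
The plan is to apply the Salvetti--Deligne presentation of $\uppi_1(\mathbb{C}^n \setminus \scrH_\mathbb{C})$ for a simplicial arrangement: generators $\ell_{\upalpha, i}$ indexed by pairs consisting of an atom $\upalpha\colon C_+ \to D$ and a wall $i$ of $D$, modulo the relations declaring that the monodromy products attached to any two minimal positive paths sharing endpoints are equal. Both $\scrH$ and $\scrH^{\aff}$ are simplicial, so the same scheme covers both cases. I will define $g$ (respectively $g^{\aff}$) by sending $\ell_{\upalpha, i}$ to $\Geo_{\upalpha,i}^{-1}$, which is an autoequivalence by the construction of $\S\ref{Twist on U}$. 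The substance of the theorem is then that every relator in the presentation is sent to the identity autoequivalence of $\Db(\coh U)$.

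First, I would reduce relation-checking to the complete local setting. By Corollary \ref{twist is an equiv} and Lemma \ref{inversecommutative}, the functor $\TW_{\upalpha, i}^{\pm 1}$ intertwines through $\mathrm{F}$ with the complete local functor $\mathrm{G}_{\upalpha, i}^{\pm 1}$, and transferring across the tilting equivalence $\Upomega$ of \eqref{Geometric cone} gives the corresponding intertwining for $\Geo_{\upalpha, i}^{\pm 1}$. For any relator word $r$ of $\uppi_1$, the composite $\mathrm{G}_{\upalpha_1, i_1}^{\pm 1}\circ\cdots\circ \mathrm{G}_{\upalpha_t, i_t}^{\pm 1}$ in the complete local picture is already known to be isomorphic to the identity on $\D(\hat{\Lambda})$; this is the complete-local group action of previous work (notably in $\scrH$ via \cite{HW} and in $\scrH^{\aff}$ by the same methods applied to the infinite arrangement). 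Hence the corresponding composite $H_r$ of geometric twists becomes trivial after completion.

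The hard part, and main obstacle, is globalization: promoting ``$\widehat{H_r}\cong \Id$'' to ``$H_r \cong \Id$'' on $\Db(\coh U)$. For this I plan to combine three inputs. First, Lemma \ref{Geotwist and j star} and Corollary \ref{coGeotwist and j star} give $j^{*}\circ H_r \cong j^{*}$ for $j\colon W = U\setminus C \hookrightarrow U$, so $H_r$ restricts to the identity away from the exceptional curve. Second, the intertwining above forces the Fourier--Mukai kernel $\scrK_r$ of $H_r$ to agree with $\scrO_{\Delta}$ after completion along $C\times C$. Third, a gluing and reflexivity argument on $\scrK_r$, using the cover of $U\times U$ by $W\times W$ and the formal neighbourhood of $C\times C$, would identify $H_r \cong -\otimes \scrL$ for some line bundle $\scrL$ on $U$. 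Proposition \ref{geo is identity} then forces $\scrL \cong \scrO_U$ and hence $H_r \cong \Id$, completing the verification of relations. The infinite-arrangement case $g^{\aff}$ is strictly parallel.

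The globalization step is the delicate one because $U$ is only quasi-projective and the standard Bondal--Orlov structure theorems do not apply directly; the saving grace is that $H_r$ is not an arbitrary autoequivalence but a composite of explicit geometric twists, each of which is the identity off $C$ by Lemma \ref{Geotwist and j star}, preserves skyscrapers off $C$ by Lemma \ref{sky scrapper sheaf on curve}, and is complete-locally controlled via Proposition \ref{02 0f 06 march 2023}. These three inputs together should be sufficient to pin down the kernel $\scrK_r$ as a line-bundle twist of $\scrO_\Delta$, reducing the statement to Proposition \ref{geo is identity}.
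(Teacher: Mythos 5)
Your reduction to the complete local setting is exactly the paper's first move (via Lemma~\ref{inversecommutative} and the known complete-local action from \cite{IW7}), and your use of Lemma~\ref{Geotwist and j star}, Corollary~\ref{coGeotwist and j star}, and Proposition~\ref{geo is identity} as closing ingredients is also on target. The divergence, and the gap, is in your globalization step.

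The paper does not perform a kernel-gluing argument at all. Its globalization is a skyscraper argument: for $x \in C$, the commutative diagram relating $\D(\hat\Lambda)$ and $\D(\Lambda)$ together with the complete-local identity $\mathrm{G}^{\pm1}_{\upalpha_1,i_1}\cdots\mathrm{G}^{\pm1}_{\upalpha_t,i_t}\cong\Id$ shows that $H_r(\scrO_x)\cong\scrO_x$ (using that $\scrO_x$ is finite length and so lives in the image of the fully faithful completion functor); for $x\notin C$, Lemma~\ref{sky scrapper sheaf on curve} directly gives $H_r(\scrO_x)\cong\scrO_x$. Having fixed every skyscraper, the paper simply cites the standard result (\cite[Prop.~7.18]{DW1}, \cite{HUY,T}) that an autoequivalence fixing all skyscrapers must be of the form $-\otimes\scrL$, and then applies Proposition~\ref{geo is identity}. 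Your proposal, by contrast, proposes to identify $H_r$ with a line-bundle twist by gluing the Fourier--Mukai kernel $\scrK_r$ from $W\times W$ and the formal neighbourhood of $C\times C$. This does not work as stated: those two loci do not cover $U\times U$, and even granting that $\scrK_r$ ought to be supported near $\Delta_U\cup(C\times C)$, that support claim itself needs proof and the ``reflexivity argument'' by which the kernel would be extended over the missing region is not spelled out and is not obviously available in the quasi-projective setting. Notice also that you list Lemma~\ref{sky scrapper sheaf on curve} as one of your inputs but never exploit it as the paper does --- namely as one half of a case split that verifies $H_r$ preserves all skyscrapers, which is precisely what makes the cited line-bundle characterization applicable and renders any kernel manipulation unnecessary.

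A smaller point: the paper sends the generator $\ell_{\upalpha,i}$ to $\Geo_{\upalpha,i}$ rather than to $\Geo_{\upalpha,i}^{-1}$; this is a harmless sign convention, but worth aligning.
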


\begin{proof}
We prove the \(g^{\aff}\) version,  with the finite version being easier. It is known that \( \uppi_{1}(\mathbb{C}^{n} \setminus \scrH^{\aff})\) is generated by \(\ell_{\upalpha,i},\)  where these correspond to a monodromy around a hyperplane \(i\) through an atom \(\upalpha\) in the hyperplane arrangement  \(\scrH^{\aff}.\)

The map from the  free group generated by the  \(\ell_{\upalpha,i}\) to \(\mathrm{Auteq}\Db(\coh U)\) sending  \[\ell_{\upalpha, j} \mapsto \Geo_{\upalpha,i}\]  is a well defined group homomorphism.  It suffices to show that for any arbitrary relation \(\ell^{\pm1}_{\upalpha_{1},i_{1}}\ell^{\pm1}_{\upalpha_{2},i_{2}} \hdots \ell^{\pm1}_{\upalpha_{t},i_{t}}=1\) in \(\uppi_{1}(\mathbb{C}^{n} \setminus \scrH^{\aff}),\) then  \(g^{\aff}(\ell^{\pm1}_{\upalpha_{1},i_{1}}\ell^{\pm1}_{\upalpha_{2},i_{2}} \hdots \ell^{\pm1}_{\upalpha_{t},i_{t}})=\Id.\)  We recall that the skyscraper sheaf \(\scrO_{x}\) is a sheaf supported at a single point, and we consider two cases, namely  \( x\in C\)  and \( x\notin C.\)
	 
When \(x\in C, \) recalling the notation in \eqref{formal fibre diagram}, consider  the diagram
{\scriptsize
 \[\begin{array}{c}
	\begin{tikzpicture}
\node (e1) at (-3,1) {$\scrO_{x} \in $};
\node (e2) at (-1,-3) {$\scrO_{x} \in $};
		\node (d1) at (-2,1) {$\Db(\scrU)$};
		\node (d2) at (9,1) {$\Db(\scrU)$};
		\node (a1) at (0,0) {$\Db(\hat{\Lambda})$};
		\node (a2) at (7,0) {$\Db(\hat{\Lambda})$};
		\node (b1) at (0,-1.5) {$\Db(\Lambda)$};
		\node (b2) at (7,-1.5) {$\Db(\Lambda)$};
		\node (c1) at (0,-3) {$\Db(U)$};
		\node (c2) at (7,-3) {$\Db(U)$};
		\draw[->] (a1) to node[above] {$\scriptstyle \mathrm{G}^{\pm1}_{\upalpha_{1},i_{1}}\mathrm{G}^{\pm1}_{\upalpha_{2},i_{2}}\cdots \mathrm{G}^{\pm1}_{\upalpha_{t},i_{t}}$} (a2);
		\draw[->] (b1) to node[above] {$ \scriptstyle \TW^{\pm1}_{\upalpha_{1},i_{1}}\TW^{\pm1}_{\upalpha_{2},i_{2}}\cdots \TW^{\pm1}_{\upalpha_{t},i_{t}}$} (b2);
		\draw[->] (a1) to node[left] {$ $} (b1);
		\draw[->] (a2) to node[right] {$ $} (b2);		
		\draw[->] (c1) to node[above] {$ \scriptstyle \Geo^{\pm1}_{\upalpha_{1},i_{1}}\Geo^{\pm1}_{\upalpha_{2},i_{2}}\cdots \Geo^{\pm1}_{\upalpha_{t},i_{t}}$} (c2);
		\draw[->] (b1) to node[right] {$ \sim$} (c1);
		\draw[->] (b2) to node[left] {$ \sim $} (c2);		
		\draw[->] (d1) to node[right] {$ $} (a1);		
		\draw[->] (d2) to node[right] {$ $} (a2);
	\draw[->] (d2) to node[right] {$ $} (c2);	
	\draw[->] (d1) to node[right] {$ $} (c1);	
	\draw [line width = 1 pt,| ->] (e1) edge (e2);				
	\end{tikzpicture}
\end{array}\]
}
The top most rectangle commutes by  Lemma \ref{inversecommutative}, and the bottom by definition. The left and right commute by \cite{VdB}.  Further since \cite{IW7} proves the existence of the affine action in the complete local setting,  \(\mathrm{G}^{\pm1}_{\upalpha_{1},i_{1}}\mathrm{G}^{\pm1}_{\upalpha_{2},i_{2}}\cdots \mathrm{G}^{\pm1}_{\upalpha_{t},i_{t}}\cong \Id.\)

Thus if   \(x\in C, \) then  \(x\in \scrU, \) so
\[\Geo^{\pm1}_{\upalpha_{1},i_{1}}\Geo^{\pm1}_{\upalpha_{2},i_{2}}\cdots \Geo^{\pm1}_{\upalpha_{t},i_{t}}(\scrO_{x})=\scrO_{x} \] follows. If  \(x\notin C,\) then by repeated use of Lemma \ref{sky scrapper sheaf on curve},  \[\Geo^{\pm1}_{\upalpha_{1},i_{1}}\Geo^{\pm1}_{\upalpha_{2},i_{2}}\cdots \Geo^{\pm1}_{\upalpha_{t},i_{t}}(\scrO_{x})=\scrO_{x}.\]  Thus in all cases \(\scrO_{x} \) is fixed under \( \Geo^{\pm1}_{\upalpha_{1},i_{1}}\Geo^{\pm1}_{\upalpha_{2},i_{2}}\cdots \Geo^{\pm1}_{\upalpha_{t},i_{t}}.\) As in \cite[Proposition 7.18]{DW1}, also see  \cite{HUY,T}, it follows that \[\Geo^{\pm1}_{\upalpha_{1},i_{1}}\Geo^{\pm1}_{\upalpha_{2},i_{2}}\cdots \Geo^{\pm1}_{\upalpha_{t},i_{t}}\cong -\otimes \scrL, \] for some line bundle. By Proposition \ref{geo is identity} the result follows.
\end{proof}

\section{The Quasi-Projective Case}\label{section 8}
\subsection{ Autoequivalences on  \texorpdfstring{$X$}{X}}
In this  subsection we globalise the algebraic flopping  contraction \( f \colon U \to \Spec R\) to obtain autoequivalences on quasi-projective varieties.  We now consider the following setup.

\begin{setup}\label{global set up}
Let \(h\colon X \to  X_{\con} \)  be a 3-fold  flopping contraction,  where X is quasi projective and has only Gorenstein terminal singularities.
\end{setup}

To describe an autoequivalence on \(X, \) we consider the following, where the preimage of each point \(p_{k}\) is a finite chain of curves.
\[	\begin{tikzpicture}
\node at (-4.9,0) {$ X$};
\node at (-4.9,-3) {$X_{\con}$};
\node at (-1,0) {$\hdots$};
\node at (-1,-3) {$\hdots$};
\node at (-3,-3.3) {$\scriptstyle p_{1}$};
\node at (-2.3,-3.3) {$ \scriptstyle p_{2}$};
\node at (2.8,-3.2) {$\scriptstyle p_{t}$};
\node at (0,0) { \begin{tikzpicture}[scale=0.5]
\coordinate (T0) at (-1.5,3);
\coordinate (B0) at (-1.5,2);
\coordinate (T) at (-1.5,2.3);
\coordinate (B) at (-1.5,1.3);
\coordinate (T1) at (-1.5,1.8);
\coordinate (B1) at (-1.5,0.8);
\coordinate (C0) at (0.2,2.7);
\coordinate (N0) at (0.2,1.8);
\coordinate (C1) at (0.2,2.2);
\coordinate (N1) at (0.2,1.3);
\coordinate (M0) at (11,2);
\coordinate (K0) at (11,1);
\coordinate (M) at (11,1.3);
\coordinate (K) at (11,0.3);
\coordinate (M1) at (11,0.8);
\coordinate (K1) at (11,-0.5);
\coordinate (M2) at (10.5, 1.7);
\coordinate (K2) at (11.5, 1.7);
\draw[red,line width=1pt] (B) to [bend left=25] (T);
\draw[red,line width=1pt] (B1) to [bend left=25] (T1);
\draw[red,line width=1pt] (B0) to [bend left=25] (T0);
\draw[red,line width=1pt] (N1) to [bend left=25] (C1);
\draw[red,line width=1pt] (N0) to [bend left=25] (C0);
\draw[red,line width=1pt] (K) to [bend left=25] (M);
\draw[red,line width=1pt] (K1) to [bend left=25] (M1);
\draw[red,line width=1pt] (K0) to [bend left=25] (M0);
\draw[red,line width=1pt] (K2) to [bend left=25] (M2);
\draw[color=blue!60!black,rounded corners=11pt,line width=1pt] (-3,-0.2) -- (9,-0.5)-- (12.2,-0.7) -- (13.5,0.8)-- (11,2.8)-- (5.5,2.4) -- (3,3) -- (-2.7,3.6) -- (-3,2)-- cycle;				
\end{tikzpicture}};
\node at (0,-3) {\begin{tikzpicture}[scale=0.5]
\filldraw [red] (-1.5,0.75) circle (1pt);
\filldraw [red] (0.0,0.75) circle (1pt);
\filldraw [red] (10.2,0.75) circle (1pt);
\draw[color=blue!60!black,rounded corners=5pt,line width=1pt] (-3,0) -- (-2,-0.15)-- (13.1,0) -- (12,1.75)-- (-2.5,1.6) -- (-3,1.35) -- (-3.7,1) -- (-3.3,0.5)-- cycle;
\end{tikzpicture}};
\draw[->, color=blue!60!black] (-3.2,-1) -- (-3.2,-2.5);
\draw[->, color=blue!60!black] (-2.5,-1) -- (-2.5,-2.5);
\draw[->, color=blue!60!black] (2.8,-1.1) -- (2.8,-2.4);
\end{tikzpicture}
\]

Associated to each singular point \(p_{k}\)  is a corresponding finite hyperplane arrangement \(\scrH_{k}\)  and  a corresponding infinite hyperplane arrangement \(\scrH_{k}^{\aff}.\) Most of what follows is adapted from the techniques in  \cite{DW4}, and  to make this paper self contained we summarise  it below.  Choose an affine open subset \(\Spec R\) of \(X_{\con}\) containing  only one \(p_{k},\) and set \(U\) to be its preimage.
\begin{equation}\label{key open diagram}
	\begin{array}{c}	\begin{tikzpicture}
\node at (-4.5,0) {$ X$};
\node at (-4.3,-3) {$X_{\con}$};
\node at (-3.2,0.2) {$ \scriptstyle U $};
\node at (-1.9,-2.9) {$ \scriptstyle \Spec R $};
\node at (-2.5,0) { \begin{tikzpicture}[scale=0.5]
\coordinate (C0) at (2.2,2.7);
\coordinate (N0) at (2.2,1.8);
\coordinate (C1) at (2.2,2.2);
\coordinate (N1) at (2.2,1.3);
\draw[red,line width=1pt] (N1) to [bend left=25] (C1);
\draw[red,line width=1pt] (N0) to [bend left=25] (C0);
\draw[color=blue!60!black,rounded corners=5pt,line width=1pt] (0,0.2) -- (2,0.5)-- (5.1,0.2) -- (5.8,2.1)-- (5.5,3.8) -- (3,3.3) -- (0.7,3.6) -- (-0.7,2.6)-- cycle;					
	\end{tikzpicture}};
	\node at (-2.5,-3) {\begin{tikzpicture}[scale=0.5]
\filldraw [red] (2.5,0.75) circle (1pt);
\draw[color=blue!60!black,rounded corners=5pt,line width=1pt] (1,0) -- (2,0.15)-- (4.1,0) -- (4.8,0.75)-- (4.5,1.6) -- (3,1.35) -- (0.7,1.5) -- (0.3,1)-- cycle;
	\end{tikzpicture}};
\node at (-2.5,-3) {
	\begin{tikzpicture}  
		\draw[color=black](-9,0) circle(0.2cm);
\end{tikzpicture}};
	\node at (-2.6,0) {
	\begin{tikzpicture}  
	\draw[color=black](-9,0) circle(0.5cm);
\end{tikzpicture}};
	\draw[->, color=blue!60!black] (-2.5,-1) -- (-2.5,-2.5);
\end{tikzpicture}
\end{array}
\end{equation} From this, consider the following diagram 
 \[\begin{array}{c}
	\begin{tikzpicture}
		\node (a1) at (0,0) {$U $};
		\node (a2) at (3,0) {$X$};
		\node (b1) at (0,-1.5) {$\Spec R $};
		\node (b2) at (3,-1.5) {$X_{\mathrm{con}}$};
		\draw[right hook->] (b1) to node[above] {$\scriptstyle i$} (b2);
		\draw[right hook->] (a1) to node[below] {$ $} (a2);
		\draw[->] (a1) to node[left] {$ \scriptstyle f $} (b1);
		\draw[->] (a2) to node[right] {$ h$} (b2);					
	\end{tikzpicture}
\end{array}\]

Below, it will be convenient to change the structure sheaf and consider  general ringed spaces \((X, \scrA). \) By basic  theory, a coherent sheaf on \(X\) is defined  with reference to the sheaf of rings that contains its geometric information.  When \(\scrA\)  is fixed, we define an abelian category of coherent sheaves  \(\coh (X, \scrA), \)  where  if \(\scrA=\scrO_{X}\) then \(\coh (X, \scrA)\cong \coh X.\) 

Now, reinterpreting \eqref{Geometric cone} implies the equivalence 
\[\Db(\coh U)\xrightarrow{\sim} \Db(\coh (\Spec R, \End_{R}(f_{*}\mathcal{V}))\,), \]where \(\End_{U}(\mathcal{V}) \cong  \End_{R}(f_{*}\mathcal{V})\cong  \Lambda\)  as in  $\S$\ref{geometric setup}. In order to lift the derived equivalence from \(\hat{\Lambda}\) to \(\Lambda,\) the main trick in \S \ref{preliminary} is to  consider  the  restriction and extension of scalars between module  categories 
\begin{equation}\label{motivation to quasi coherent}\begin{array}{c}
		\begin{tikzpicture}[looseness=1,bend angle=20]
			\node (a1) at (0,0) {$\Mod \hat{\Lambda}$};
			\node (a2) at (3,0) {$\Mod \Lambda$};
			\draw[->] (a1) -- node {$ $} (a2);
			\draw[->,bend right] (a2) to node[above] {$ $} (a1);								
		\end{tikzpicture}
	\end{array} \end{equation} then replace the bimodule morphisms defined in Proposition \ref{01 of  06 December 2022} as follows
\[ \left(
\hat{\Lambda} \to {}_{\Lambda}\hat{\Lambda} \otimes \left(  \mathbb{M}\otimes  \uptau_{\upalpha}\otimes Z_{\upalpha,i}\right)\otimes\hat{\Lambda}_{\Lambda}\right) \rightsquigarrow
\left( \Lambda \to \hat{\Lambda} \to  {}_{\Lambda}\hat{\Lambda} \otimes \left(  \mathbb{M}\otimes  \uptau_{\upalpha}\otimes Z_{\upalpha,i}\right)\otimes\hat{\Lambda}_{\Lambda} \right).
\]

We now repeat this trick using sheaves of algebras. Reverting to Setup \ref{global set up}, let \(\scrP=\scrO_X \oplus \scrP_0\) be the local progenerator on \(X\) of \cite[3.31]{VdB}. As in \cite[Proposition 2.5(2)]{DW4} the fibre of \(h\) has dimension of at most one, so by \cite[Assumption 2.3]{DW4}, there is an equivalence  
\[\textbf{R} h_{*} \textbf{R}\mathcal{H}om_{X}(\scrP,-) \colon \Db(\coh X) \to \Db (\coh(X_{\mathrm{con }}, \mathcal{E}nd_{X_{\mathrm{con }}}(h_{*}\scrP))).\]  Set \(\scrA \colonequals \mathcal{E}nd_{X_{\mathrm{con }}}(h_{*}\scrP^{*}), \) so that \(\scrA \cong (\mathcal{E}nd_{X_{\mathrm{con }}}(h_{*}\scrP))^{\mathrm{op}}.  \)  There are now functors   \[\begin{array}{c}
\begin{tikzpicture}[looseness=1,bend angle=30]
\node (a1) at (0,0) {$ \Qcoh (\Spec R,  \End(f_{*}\scrP^{*}))$};
\node (a2) at (5,0) {$\Qcoh (X_{\con}, \scrA)$};
\draw[->] (a1) -- node[below] {$ i_{*}$} (a2);
\draw[->,bend right,yshift=.2cm] (3.7,0) to node[above] {$ i^{-1}$} (2.1,0);								
	\end{tikzpicture}
\end{array} \] by 	 e.g  \cite[18.3.2]{KS}, where the inverse image functor \(i^{-1}\) is left adjoint to the push forward  \(i_{*}.\)  Similarly, there is an adjointion \[\begin{array}{c}
\begin{tikzpicture}[looseness=1,bend angle=30]
\node (a1) at (0,0) {$ \Qcoh \left(\Spec R,  i^{-1}\scrA \otimes_{\mathbb{C}} i^{-1}\scrA^{\mathrm{op}} \right)$};
\node (a2) at (5.5,0) {$\Qcoh (X_{\con}, \scrA \otimes_{\mathbb{C}} \scrA^{\mathrm{op}})$};
\draw[->] (a1) -- node[below] {$ i_{*}$} (a2);
\draw[->,bend right,yshift=.2cm] (3.7,0) to node[above] {$ i^{-1}$} (2.2,0);								
\end{tikzpicture}
\end{array}, \] where \( i^{-1}\scrA= \End( f_{*}\scrP^{*})\) and \( i^{-1}\scrA^{\mathrm{op}} = \End(f_{*}\scrP^{*})^{\mathrm{op}}.\)   Now  \(i^{-1}({}_{\scrA}\scrA_{\scrA})\colonequals {}_{\Lambda}\Lambda_{\Lambda},\) and for any choice of \((\upalpha,i)\) in Setup \ref{bi setup}, there is a bimodule map \(\Lambda \to Q\) by $\S$\ref{twistfun} where  \(Q\colonequals {}_{\Lambda}\hat{\Lambda} \otimes \left(  \mathbb{M}\otimes  \uptau_{\upalpha}\otimes Z_{\upalpha,i}\right)\otimes\hat{\Lambda}_{\Lambda}. \)  Re-interpreting this as a bimodule map \( i^{-1}\scrA \to Q, \)  we push this forward to give \(i_{*}i^{-1}\scrA \to i_{*}Q. \) We now play the same trick as in \eqref{motivation to quasi coherent} above,  namely we replace 
\[ (i_{*}i^{-1}\scrA \to i_{*}Q ) \rightsquigarrow \left( \scrA \to i_{*}i^{-1}\scrA \to i_{*}Q \right). \] Thus there is a bimodule map 
\begin{equation}\label{map to take cone}
\scrA \to {}_{\scrA}(i_{*}Q)_{\scrA}.
\end{equation}
Taking the cone in the derived category of \(\scrA\) bimodules gives a triangle 
\begin{equation}\label{global equiv triangle}
	\scrC \to \scrA \to {}_{\scrA}(i_{*}Q)_{\scrA}\to  \scrC[1].
\end{equation} 		

\begin{definition}\label{twist on X defn}
Under the the global  Setup  \textnormal{\ref{global set up}},  for any singular point \(p_{k}\) consider the associated \(\scrH_{k}\) and \(\scrH^{\aff}_{k}.\) For any \((\upalpha,j)\)  consider \(\scrC \colonequals \scrC_{\upalpha,j}\) constructed above, and define 
\[\TW_{X,\upalpha,j}, \TW^{*}_{X,\upalpha,j}\colon \D( \Qcoh X)\to \D(\Qcoh X),\]  by
\(\TW_{X,\upalpha,j}=  \mathbf{R}\mathcal{H}om_{\scrA}(\scrC,-)\) and  \(\TW^{*}_{X,\upalpha,j}= -\otimes^{\bf{L}}_{\scrA} \scrC. \)
\end{definition}

\begin{theorem}\label{global equivalence}
Under the the global  Setup  \textnormal{\ref{global set up}},  for any singular point \(p_{k}\) consider the associated \(\scrH_{k}\) and \(\scrH^{\aff}_{k}.\) Then  for any \((\upalpha,j),\)  \(\TW_{X,\upalpha,j}\) and \(\TW^{*}_{X,\upalpha,j}\) are equivalences.
\end{theorem}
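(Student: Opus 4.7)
The plan is to show that $\scrC$ is Zariski locally a two-sided tilting complex of $\scrA$-bimodules on $X_{\con}$, and then to invoke the sheaf-of-algebras version of Rickard's theorem to conclude that $\TW_{X,\upalpha,j}$ and $\TW^{*}_{X,\upalpha,j}$ are inverse equivalences. Being a two-sided tilting complex, and in particular the perfectness and Hom-isomorphism conditions that characterise it, is Zariski-local, so it suffices to verify this on a convenient affine cover of $X_{\con}$.

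I would work with the cover consisting of the affine open $\Spec R$ from \eqref{key open diagram} containing the chosen singular point $p_k$, and its complement $V \colonequals X_{\con} \setminus \{p_k\}$. On $V$ the bimodule $i_{*}Q$ vanishes, since by the construction in $\S\ref{twistfun}$ together with the argument of Claim \ref{01 0f 20th november 2022} the object $Q = {}_{\Lambda}\hat{\Lambda}\otimes(\mathbb{M}\otimes\uptau_{\upalpha}\otimes Z_{\upalpha,j})\otimes\hat{\Lambda}_{\Lambda}$ is supported only at the maximal ideal $\m$ corresponding to $p_k$; hence the defining triangle \eqref{global equiv triangle} restricts on $V$ to an isomorphism $\scrC|_V \xrightarrow{\sim} \scrA|_V$, which is trivially two-sided tilting with associated functor the identity. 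On $\Spec R$, the inverse image functor $i^{-1}$ identifies quasi-coherent $\scrA$-modules with $\Lambda$-modules, and by the adjunction between $i^{-1}$ and $i_{*}$ the constructed bimodule map $\scrA \to i_{*}Q$ restricts precisely to the local bimodule map $\Lambda \to Q$ used in Proposition \ref{01 of  06 December 2022}. Taking cones then identifies $\scrC|_{\Spec R}$ with the local bimodule complex $C_{\upalpha,j}$, which is a two-sided tilting complex for $\Lambda$ by Theorem \ref{twosided tilt thm}.

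Combining these two restrictions, $\scrC$ is locally two-sided tilting, hence globally so: perfectness descends by Corollary \ref{02 of 16 march 2023} applied at $p_k$ together with triviality on $V$, and the Hom-isomorphism $\scrA \to \mathbf{R}\mathcal{H}om_{\scrA}(\scrC,\scrC)$ can similarly be checked on the open cover. The sheaf-of-algebras analogue of Rickard's Theorem \ref{rickard thm}, in the form used in \cite[\S2]{DW4}, then produces the desired inverse equivalences $\mathbf{R}\mathcal{H}om_{\scrA}(\scrC,-)$ and $-\otimes^{\bf L}_{\scrA}\scrC$ on $\D(\Qcoh(X_{\con},\scrA))$; composing with the tilting equivalence $\Db(\coh X) \simeq \Db(\coh(X_{\con},\scrA))$ yields the claim. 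I expect the main obstacle to be bookkeeping rather than genuine content: specifically, verifying that the factorisation $\scrA \to i_{*}i^{-1}\scrA \to i_{*}Q$ used to define the global bimodule map is compatible with restriction back to $\Spec R$ at the derived level, so that the cone genuinely restricts to $C_{\upalpha,j}$ and not merely to something quasi-isomorphic in the wrong category. Once that compatibility is in hand, the Zariski-local input Theorem \ref{twist is an equiv} does all the remaining work.
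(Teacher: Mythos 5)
Your approach is correct and matches the paper's essentially verbatim: both restrict $\scrC$ to the affine $\Spec R$ to identify it with the Zariski-local two-sided tilting complex $C_{\upalpha,j}$ via $i^{-1}$, observe that $\scrC$ restricts to $\scrA$ (hence the identity functor) on opens missing $p_k$ because $Q$ is supported only at $\m$, and then invoke the \cite{DW4}-style gluing of local equivalences. The only cosmetic difference is that the paper works from an affine open cover of $X_{\con}$ (as the gluing lemma requires) rather than the two-element cover $\{\Spec R, X_{\con}\setminus\{p_k\}\}$ you name, and it phrases the local check as ``restricts to an equivalence'' rather than ``restricts to a two-sided tilting complex'' --- equivalent by Rickard --- so your Corollary~\ref{02 of 16 march 2023} citation is not needed at the gluing step since it was already consumed in proving Theorem~\ref{twosided tilt thm}.
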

\begin{proof}
Applying  \(i^{-1}\)  to \eqref{global equiv triangle} yields the following triangle 
\[i^{-1}\scrC \to i^{-1}\scrA \to  i^{-1}\left({}_{\scrA}Q_{\scrA}\right)\to. \]
Under the Zariski local   Setup   \ref{01 of 23 april 2023}, we have that \(i^{-1}\scrA ={}_{\Lambda}\Lambda_{\Lambda}, 	i^{-1}\left({}_{\scrA}Q_{\scrA}\right) ={}_{\Lambda}Q_{\Lambda}\) 
and thus consequently  \( i^{-1}\scrC= C\) from  the Zariski local setup.

 Choose an affine open cover of  \(X_{\con}\) containing \(\Spec R\) in \eqref{key open diagram}, where no other open set contains \(p_{k}\).  To ease notation,  write \(V=\Spec R,\) and consider  \[ \mathbf{R}\mathcal{H}om_{\scrA|_{V}}(\scrC|_{V},-) \colon  \D(\Mod \scrA|_{V}) \to  \D(\Mod \scrA|_{V}).\] Since \(V\) is affine, by \cite[Setup 4.1]{DW4}, \(\scrA|_{V}\) corresponds to \(\Lambda\) and \(\scrC|_{V}\)  to \({}_{\Lambda}C_{\Lambda}.\)
 
 Hence, the functor  \(  \mathbf{R}\mathcal{H}om_{\scrA|_{V}}(\scrC|_{A},-)\) becomes \[\RHom_{\Lambda}(C,-)\colon \D(\Mod \Lambda) \to \D(\Mod \Lambda) \] which is simply the equivalence \(\TW_{\upalpha,j}\) on \(\Lambda\). On the other opens \(W\) \[\mathbf{R}\mathcal{H}om_{\scrA|_{W}}(\scrC|_{W},-)=\mathbf{R}\mathcal{H}om_{\scrA|_{W}}(\scrA|_{W},-)\] since \(Q|_{W}=0.\) Thus on all opens in the  covering of \(X_{\con}, \mathbf{R}\mathcal{H}om_{\scrA}(\scrC,-)\)  restricts to an equivalence. As in \cite[$\S$5.2]{DW4}, this implies that \(\mathbf{R}\mathcal{H}om_{\scrA}(\scrC,-)\) is an equivalence. Its adjoint \(\TW^{*}_{X,\upalpha,j}\) must be the inverse, and so is also an equivalence.
\end{proof}

\subsection{Group actions on  \texorpdfstring{$ \Db(\coh X)$ }{D}}
The following is a technical result leading to our main result.
\begin{theorem}\label{ theorem fixed pk}
For each singular point \(p_{k},\) there exists group homomorphisms
 \[\begin{array}{c}
\begin{tikzpicture}
\node (a1) at (0,0) {$  \uppi_{1}(\mathbb{C}^{n_{k}} \setminus (\scrH_{k})_{\mathbb{C}})  $};
\node (a2) at (0,-1.5) {$  \uppi_{1}(\mathbb{C}^{n_{k}} \setminus (\scrH_{k}^{\aff})_{\mathbb{C}}) $};
\node (b1) at (4,0) {$ \mathrm{Auteq}\Db(\coh X)$};
\draw[->] (a1) to node[below] {$ $} (a2);		
\draw[->] (a2) to node[below] {$\scriptstyle m_{k}^{\aff} $} (b1);		
\draw[->] (a1) to node[above] {$ \scriptstyle m_{k}$} (b1);				
	\end{tikzpicture}
\end{array}\] 
\end{theorem}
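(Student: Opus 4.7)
The plan is to mirror the local Proposition \ref{taction on u} using the global twist equivalences supplied by Theorem \ref{global equivalence}, and to reduce every relation to a statement on the affine open $\Spec R \subset X_{\con}$ of \eqref{key open diagram} that contains only the singular point $p_{k}$, where the already-proved Proposition \ref{taction on u} applies directly. I would establish the affine version $m_{k}^{\aff}$ in detail; the finite version $m_{k}$ then follows by the same argument with $\scrH_{k}$ in place of $\scrH_{k}^{\aff}$, and commutativity of the triangle is immediate from the surjection $\uppi_{1}(\mathbb{C}^{n_{k}} \setminus (\scrH_{k})_{\mathbb{C}}) \to \uppi_{1}(\mathbb{C}^{n_{k}} \setminus (\scrH_{k}^{\aff})_{\mathbb{C}})$ used in the local statement.

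As in the proof of Proposition \ref{taction on u}, the group $\uppi_{1}(\mathbb{C}^{n_{k}} \setminus (\scrH_{k}^{\aff})_{\mathbb{C}})$ is generated by loops $\ell_{\upalpha,j}$ indexed by atoms $\upalpha\colon C_{+}\to D$ and walls $j$ of $D$ in $\scrH_{k}^{\aff}$. I would define the candidate homomorphism on generators by
\[
m_{k}^{\aff}(\ell_{\upalpha,j}) \colonequals \TW_{X,\upalpha,j},
\]
with $\TW_{X,\upalpha,j}$ the equivalence of Theorem \ref{global equivalence} (Definition \ref{twist on X defn}). To see this descends to the fundamental group, it suffices to verify that every relation $\ell^{\pm 1}_{\upalpha_{1},j_{1}}\cdots \ell^{\pm 1}_{\upalpha_{t},j_{t}}=1$ in $\uppi_{1}(\mathbb{C}^{n_{k}} \setminus (\scrH_{k}^{\aff})_{\mathbb{C}})$ forces the composition
\[
H \colonequals \TW^{\pm 1}_{X,\upalpha_{1},j_{1}} \circ \cdots \circ \TW^{\pm 1}_{X,\upalpha_{t},j_{t}}
\]
to be isomorphic to $\Id_{\Db(\coh X)}$. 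The strategy, modelled on the proof of \ref{taction on u}, is to show $H$ fixes every closed skyscraper sheaf $\scrO_{x}$, then to deduce via the argument of \cite[7.18]{DW1} (using \cite{HUY,T} in the quasi-projective setting) that $H \cong -\otimes \scrL$ for some line bundle $\scrL$, and finally to kill $\scrL$ by the codimension-two reflexivity argument of Proposition \ref{geo is identity} applied on the complement of the union of all flopping curves of $h$.

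The check that $H(\scrO_{x}) \cong \scrO_{x}$ splits into three cases, using that the twist kernel $\scrC_{\upalpha,j}$ of \eqref{global equiv triangle} differs from $\scrA$ only through $i_{*}Q$, which is set-theoretically supported over $p_{k}$. For $x$ mapping outside $\Spec R$, the proof of Theorem \ref{global equivalence} shows $\scrC|_{W} \cong \scrA|_{W}$ on any affine open $W$ missing $p_{k}$, hence $\TW_{X,\upalpha,j}|_{W}$ is the identity and iterating gives $H(\scrO_{x})\cong \scrO_{x}$. For $x$ mapping to a smooth point of $\Spec R$, the analogous local statement (Lemma \ref{sky scrapper sheaf on curve}) applied after restriction to $U$ yields the same conclusion. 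For $x$ in the flopping fibre over $p_{k}$, I would use the identification $i^{-1}\scrC = C$ already extracted in the proof of Theorem \ref{global equivalence}, so that $\TW_{X,\upalpha,j}$ restricts on $U$ to $\TW_{\upalpha,j}$ and transports via the tilting bundle $\mathcal{V}$ to $\Geo_{\upalpha,j}$. Since the chosen word is a relation in $\uppi_{1}(\mathbb{C}^{n_{k}} \setminus (\scrH_{k}^{\aff})_{\mathbb{C}})$, Proposition \ref{taction on u} applied on $U$ gives exactly $\Geo^{\pm 1}_{\upalpha_{1},j_{1}} \circ \cdots \circ \Geo^{\pm 1}_{\upalpha_{t},j_{t}}(\scrO_{x}) \cong \scrO_{x}$, and transporting back yields $H(\scrO_{x})\cong \scrO_{x}$.

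The main obstacle I expect is the precise identification, on $U \subset X$, of the global twist $\TW_{X,\upalpha,j}$ with the local geometric twist $\Geo_{\upalpha,j}$. This is a compatibility between the bimodule cone defining $\scrC$ at the sheaf-of-algebras level and the bimodule cone $C$ built in $\S$\ref{twistfun}; it is already implicit in the proof of Theorem \ref{global equivalence} via $i^{-1}\scrC = C$, but some care is needed to ensure it extends to a natural isomorphism of functors (rather than merely kernels) on all of $\Db(\coh U)$. Once this matching is confirmed, the remainder of the argument reduces to an application of Proposition \ref{taction on u} on $U$, together with the standard line-bundle reduction of Proposition \ref{geo is identity} globalised across all flopping fibres of $h$.
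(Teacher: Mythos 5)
Your proposal follows the paper's argument essentially step for step: define $m_k^{\aff}$ on generators by $\ell_{\upalpha,j}\mapsto\TW_{X,\upalpha,j}$, verify relations by checking the composite fixes all skyscraper sheaves (reducing to Proposition \ref{taction on u} on $U_k$ for points over $p_k$, and to the analogue of Lemma \ref{sky scrapper sheaf on curve} elsewhere), deduce the composite is $-\otimes\scrL$, and then kill $\scrL$ by the codimension-two reflexivity argument of Proposition \ref{geo is identity}. The only differences are cosmetic: you split $x\in U_k$ into on/off the flopping fibre where the paper treats it uniformly via one commutative diagram, and your aside that the map $\uppi_1(\mathbb{C}^{n_k}\setminus(\scrH_k)_\mathbb{C})\to\uppi_1(\mathbb{C}^{n_k}\setminus(\scrH_k^{\aff})_\mathbb{C})$ is a surjection is inaccurate (it is not surjective, nor is surjectivity what makes the triangle commute --- that follows because both $m_k$ and $m_k^{\aff}$ send $\ell_{\upalpha,i}$ to $\TW_{X,\upalpha,i}$), but this does not affect the substance of the proof.
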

\begin{proof}
We prove the infinite version,  with the finite version being similar. By   Proposition  \ref{taction on u}, there exists a group homomorphism 
\begin{equation} \label{group hom on uk} \uppi_{1}(\mathbb{C}^{\mathit{n}_{k}} \setminus (\scrH^{\aff}_{k})_{\mathbb{C}})  \to \mathrm{Auteq}\Db(\coh U_{k}).
 \end{equation}
 
For any  choice \((\upalpha, j) \) in Setup \ref{bi setup}  associated to \(\scrH^{\aff}_{k},\)  temporarily write \[ \mathrm{Geo}_{k}\mathrm{Twist}_{\upalpha,j}~  \textnormal{or} ~\mathrm{Geo}_{k}\mathrm{Twist}^{-1}_{\upalpha,j}\] for the corresponding geometric  or geometric  inverse twist on \(U_{k}.\) By Theorem \ref{global equivalence}, \[\TW_{X}|_{U_{k}}\cong \mathrm{Geo}_{k}\mathrm{Twist}_{\upalpha,j} ~\textnormal{and}~ \TW^{-1}_{X}|_{U_{k}}\cong \mathrm{Geo}_{k}\mathrm{Twist}^{-1}_{\upalpha,j}.\]
 
We next define 
 \[ m^{\aff}_{k} \colon   \uppi_{1}(\mathbb{C}^{\mathit{n}_{k}} \setminus (\scrH^{\aff}_{k})_{\mathbb{C}})   \to \mathrm{Auteq}\Db(\coh X), \] by mapping the generators \(\ell_{\upalpha,j}\)  to \(\TW_{X,\upalpha, j}\) in Definition \ref{twist on X defn}. We now prove this is a homomorphism. Suppose that  \(\ell^{\pm1}_{\upalpha_{1}, j_{1}} \ell^{\pm1}_{\upalpha_{2}, j_{2}} \cdots \ell^{\pm1}_{\upalpha_{t}, j_{t}} \) is a relation in \( \uppi_{1}(\mathbb{C}^{\mathit{n}_{k}} \setminus (\scrH^{\aff}_{k})_{\mathbb{C}}).\) We prove the corresponding relation holds in \(\mathrm{Auteq}\Db(\coh X).\)  For this, by Proposition \ref{taction on u}, for a skyscraper sheaf \(\scrO_{x}\) where \(x\in U_{k},\) consider the commutative diagram
 \[\begin{array}{c}
 \begin{tikzpicture}
 \node (a1) at (-0.7,-0.8) {$\scrO_{x}  $};
 \node (a2) at (8.7, -0.8) {$\scrO_{x} $};
 \node (d1) at (-0.7,-3.4) {$ \scrO_{x}$};
 \node (d2) at (8.7,-3.4) {$ \scrO_{x}$};
 \node (b1) at (0,-1.5) {$\Db(U_{k})$};
 \node (b2) at (8,-1.5) {$\Db(U_{k})$};
 \node (c1) at (0,-3) {$\Db(X)$};
 \node (c2) at (8,-3) {$\Db(X)$};
 \node (f1) at (4,-1.35) {$ $};
  \node (f2) at (4,-1.8) {$\Id $};
 \draw[->] (c1) to node[above] {$ \scriptstyle \TW^{\pm1}_{X,\upalpha_{1},j_{1}}\TW^{\pm1}_{X,\upalpha_{2},j_{2}}\cdots \TW^{\pm1}_{X,\upalpha_{t},j_{t}}$} (c2);
 \draw[->] (b1) to node[above] {$ \scriptstyle \mathrm{Geo}_{k}\mathrm{Twist}^{\pm 1}_{\upalpha_{1},j_{1}} \mathrm{Geo}_{k}\mathrm{Twist}^{\pm 1}_{\upalpha_{2},j_{2}} \hdots \mathrm{Geo}_{k}\mathrm{Twist}^{\pm 1}_{\upalpha_{t},j_{t}} $} (b2);
 \draw[->] (b1) to node[right] {$ $} (c1);
 \draw[->] (b2) to node[left] {$ $} (c2);		
 \draw [line width = 1 pt,| ->] (a1) edge (a2);				
 \draw [line width = 1 pt,| ->] (a1) edge (d1);	
 \draw [line width = 1 pt,| ->] (a2) edge (d2);	
 \draw [line width = 1 pt,dotted, | ->] (d1) edge (d2);	
 \path (f1)--(f2) node[midway]{$\scriptstyle\parallel$};
 	\end{tikzpicture}
 \end{array}\]
where \(\scrO_{x} \mapsto \scrO_{x}\) on the top line since \eqref{group hom on uk} is a group homomorphism. Thus  \(\scrO_{x} \mapsto \scrO_{x}\) for all \(x\in U_{k}.\)  Also, \(\scrO_{x} \mapsto \scrO_{x}\) for all \(x\in X\setminus U_{k}, \) for the same reason as in  Lemma \ref{sky scrapper sheaf on curve}.  We conclude \(\scrO_{x} \mapsto \scrO_{x}\) for all \(x\in X.\)

Thus, by \cite{DW1,HUY,T}, \[\TW^{\pm1}_{X,\upalpha_{1},j_{1}}\TW^{\pm1}_{X,\upalpha_{2},j_{2}}\cdots \TW^{\pm1}_{X,\upalpha_{t},j_{t}} \cong- \otimes   \scrL.\]  As in Proposition \ref{geo is identity} applied to  \(X,\) this implies \[\TW^{\pm1}_{X,\upalpha_{1},j_{1}}\TW^{\pm1}_{X,\upalpha_{2},j_{2}}\cdots \TW^{\pm1}_{X,\upalpha_{t},j_{t}} \cong \Id,\] as required.
\end{proof}

 Recall that   if \( \scrH^{\aff}_{1} \)  and  \(\scrH^{\aff}_{2}\)  are   hyperplane arrangements in \(\mathbb{R}^{\mathit{a}_{1}}\) and \(\mathbb{R}^{\mathit{a}_{2}}\) respectively, Then   the product   hyperplane arrangement \( \scrH^{\aff}_{1} \times\scrH^{\aff}_{2}  \) is in \(\mathbb{R}^{\mathit{a}_{1}+ \mathit{a}_{2}}.\) The following is our main result.

\begin{cor}\label{global group action}
Suppose that \(X \to X_{\con}\) is a flopping contraction between quasi-projective 3-folds, where X has Gorenstein terminal singularities. For any singular point \(p_{k},\)   associate \(\scrH_{k}\) and  \(\scrH^{\aff}_{k} \) and, set \[ \mathfrak{H}\colonequals   \scrH_{1} \times \scrH_{2} \times \hdots \times \scrH_{t}~ \text{and} ~\mathfrak{H}^{\aff}\colonequals   \scrH^{\aff}_{1} \times \scrH^{\aff}_{2} \times \hdots \times \scrH^{\aff}_{t}.\]  Then there exists  group homomorphisms
 \[\begin{array}{c}
	\begin{tikzpicture}
		\node (a1) at (0,0) {$  \uppi_{1}(\oplus \mathbb{C}^{n_{k}} \setminus \mathfrak{H}_{\mathbb{C}})  $};
		\node (a2) at (0,-1.5) {$  \uppi_{1}(\oplus \mathbb{C}^{  n_{k}} \setminus \mathfrak{H}^{\aff}_{\mathbb{C}}) $};
		\node (b1) at (4,0) {$ \mathrm{Auteq}\Db(\coh X)$};
		\draw[->] (a1) to node[below] {$ $} (a2);		
		\draw[->] (a2) to node[below] {$\scriptstyle m^{\aff} $} (b1);		
		\draw[->] (a1) to node[above] {$\scriptstyle  m$} (b1);				
	\end{tikzpicture}
\end{array}\] 
\end{cor}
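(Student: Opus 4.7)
The plan is to reduce this global statement to the single-point version already established in Theorem \ref{ theorem fixed pk} by exploiting the product structure of the arrangements. Since $\mathfrak{H}^{\aff}=\scrH_{1}^{\aff}\times\cdots\times\scrH_{t}^{\aff}$ sits inside $\bigoplus_{k}\mathbb{R}^{n_{k}}$ with each constituent hyperplane pulled back from a single factor, the complexified complement splits as a product
\[
\bigoplus_{k}\mathbb{C}^{n_{k}}\setminus\mathfrak{H}^{\aff}_{\mathbb{C}}\;\cong\;\prod_{k=1}^{t}\left(\mathbb{C}^{n_{k}}\setminus(\scrH_{k}^{\aff})_{\mathbb{C}}\right).
\]
By the standard product formula for fundamental groups, $\pi_{1}$ of the left-hand side is the direct product of the $\pi_{1}$'s of the factors; likewise for $\mathfrak{H}$. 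Thus producing $m^{\aff}$ (respectively $m$) amounts to specifying a group homomorphism from each factor that lands in pairwise commuting subgroups of $\mathrm{Auteq}\,\Db(\coh X)$, and the vertical arrow is then the product of the vertical arrows supplied by Theorem \ref{ theorem fixed pk}.

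The per-factor homomorphism is exactly $m_{k}^{\aff}$ from Theorem \ref{ theorem fixed pk}. The substantive step is then the following commutation claim: for $k\neq k'$, any generators $\ell_{\upalpha,j}$ and $\ell_{\upalpha',j'}$ associated to $p_{k}$ and $p_{k'}$ respectively have the property that the autoequivalences $\TW_{X,\upalpha,j}$ and $\TW_{X,\upalpha',j'}$ commute. Granting this, one defines
\[
m^{\aff}(\sigma_{1},\ldots,\sigma_{t})\;\colonequals\;m_{1}^{\aff}(\sigma_{1})\circ\cdots\circ m_{t}^{\aff}(\sigma_{t}),
\]
which is then a well-defined group homomorphism by the universal property of the direct product, and similarly for $m$ using the finite arrangements. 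The naturality of the inclusions $\scrH_{k}\hookrightarrow\scrH_{k}^{\aff}$ then gives the vertical arrow fitting in the commuting triangle of the statement.

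The main obstacle, and really the only content beyond bookkeeping, is the commutation claim. My approach is to recycle the open-cover argument from the proof of Theorem \ref{global equivalence}. Choose an affine open cover of $X_{\con}$ so that each open contains at most one singular point $p_{k}$. By construction of Definition \ref{twist on X defn}, the bimodule map \eqref{map to take cone} defining $\scrC_{\upalpha,j}$ is an isomorphism on any open not containing $p_{k}$, since the bimodule $Q_{\upalpha,j}$ is supported at $p_{k}$; hence $\scrC_{\upalpha,j}|_{V}=0$ on such opens $V$ and $\TW_{X,\upalpha,j}|_{V}\cong\mathrm{Id}$. For $k\neq k'$, on every open in the cover at least one of $\TW_{X,\upalpha,j}$ and $\TW_{X,\upalpha',j'}$ therefore restricts to the identity, so they commute locally; gluing (as in Theorem \ref{global equivalence}) promotes this to a global commutation. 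Once this is in hand, the rest of the proof is formal: the argument that the relations in $\pi_{1}$ are respected is identical to the single-point case in Theorem \ref{ theorem fixed pk}, since one may apply that theorem one factor at a time and exploit the fact that skyscraper sheaves at points away from every $U_{k}$ are fixed by all of the twists by the same reasoning as Lemma \ref{sky scrapper sheaf on curve}.
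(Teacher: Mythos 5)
Your overall strategy — decompose $\pi_{1}$ of the product complement into a direct product of the factor fundamental groups, feed in the per-point homomorphisms $m_{k}^{\aff}$ from Theorem \ref{ theorem fixed pk}, and reduce the well-definedness of the product map to a commutation claim about twists based at distinct singular points — is exactly what the paper does. The only difference is how the commutation is handled: the paper simply cites \cite[Remark 5.6]{DW3}, whereas you attempt to re-derive it via an open-cover argument. That attempt has two problems.

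First, a local miscomputation: from the triangle $\scrC\to\scrA\to i_{*}Q\to\scrC[1]$ in \eqref{global equiv triangle}, on an affine open $V$ not containing $p_{k}$ one has $(i_{*}Q)|_{V}=0$, hence $\scrC|_{V}\cong\scrA|_{V}$, \emph{not} $\scrC|_{V}=0$. The downstream conclusion $\TW_{X,\upalpha,j}|_{V}\cong\mathrm{Id}$ is still correct, since $\mathbf{R}\mathcal{H}om_{\scrA|_{V}}(\scrA|_{V},-)=\mathrm{Id}$, but as written the argument says the wrong thing about $\scrC$.

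Second, and more substantively, ``commute locally on a cover, hence commute globally'' is not automatic for functor isomorphisms, and the appeal to ``gluing as in Theorem \ref{global equivalence}'' does not cover it: Theorem \ref{global equivalence} glues the \emph{property of being an equivalence}, which is a local condition, whereas here you need to produce an actual global natural isomorphism $\TW_{X,\upalpha,j}\circ\TW_{X,\upalpha',j'}\cong\TW_{X,\upalpha',j'}\circ\TW_{X,\upalpha,j}$, and local isomorphisms in a derived category need not patch. The correct way to close this is to work at the level of the bimodule kernels, as the twists are of the form $\mathbf{R}\mathcal{H}om_{\scrA}(\scrC,-)$: one shows $\scrC_{\upalpha,j}\otimes^{\mathbf{L}}_{\scrA}\scrC_{\upalpha',j'}\cong\scrC_{\upalpha',j'}\otimes^{\mathbf{L}}_{\scrA}\scrC_{\upalpha,j}$ as bimodules, using the two triangles $\scrC_{\upalpha,j}\to\scrA\to (i_{k})_{*}Q$, $\scrC_{\upalpha',j'}\to\scrA\to (i_{k'})_{*}Q'$ and the vanishing $(i_{k})_{*}Q\otimes^{\mathbf{L}}_{\scrA}(i_{k'})_{*}Q'=0$ coming from the disjoint supports at $p_{k}\ne p_{k'}$. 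That is presumably what \cite[Remark 5.6]{DW3} records, and is what your argument implicitly leans on; you should either make that kernel-level argument explicit or cite the reference as the paper does.
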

\begin{proof} 
Recall that   \[  \uppi_{1}(\oplus \mathbb{C}^{ \mathit{n}_{k}} \setminus \mathfrak{H}_{\mathbb{C}} )=  \uppi_{1}(\mathbb{C}^{\mathit{n}_{1}} \setminus (\scrH_{1})_{\mathbb{C}}) \times \uppi_{1}(\mathbb{C}^{\mathit{n}_{2}} \setminus (\scrH_{2})_{\mathbb{C}})) \times \hdots \times \uppi_{1}(\mathbb{C}^{\mathit{n}_{t}} \setminus (\scrH_{t})_{\mathbb{C}})\] 
 and
\[  \uppi_{1}(\oplus \mathbb{C}^{ \mathit{n}_{k}} \setminus \mathfrak{H}^{\aff}_{\mathbb{C}} )=  \uppi_{1}(\mathbb{C}^{\mathit{n}_{1}} \setminus (\scrH^{\aff}_{1})_{\mathbb{C}}) \times \uppi_{1}(\mathbb{C}^{\mathit{n}_{2}} \setminus (\scrH^{\aff}_{2})_{\mathbb{C}})) \times \hdots \times \uppi_{1}(\mathbb{C}^{\mathit{n}_{t}} \setminus (\scrH^{\aff}_{t})_{\mathbb{C}}).   \]   Set \(m=(m_{1}, m_{2}, \hdots, m_{t})\) and \(m^{\aff}=(m^{\aff}_{1}, m^{\aff}_{1},\hdots,   m^{\aff}_{t}).\)  By Theorem \ref{ theorem fixed pk}, the only thing left to prove is that the  functors from different summands of  \(\uppi_{1}\) commute, but this is   \cite[Remark 5.6]{DW3}.
\end{proof}


\begin{thebibliography}{HUY}
			
\bibitem[A]{A}
J.~August, \emph{The tilting theory of contraction algebras},  Adv.\ Math.\,   \textbf{374} (2020), 107372.				
	
 \bibitem[AM]{AM}
M.~Atiyah and I.~Macdonald, \emph{Introduction to commutative algebra}, University of Oxford press, (1969).
	
\bibitem[AW]{AW}
J.~August and M.~ Wemyss, \emph{Stability on contraction algebras}, Forum Math.\ Sigma.\, \textbf{10}, (2019), E73.
	
\bibitem[BH]{BH}
R.-O. ~Buchweitz and L.~Hille, \emph{Hochschild (co-)homology of schemes with tilting object}, Trans.\ Amer.\ Math.\ Soc.\, \textbf{ 365} (2013), no. 6, 2823–2844.
	
				
\bibitem[DW1]{DW1}
W.~Donovan and M.~Wemyss, \emph{Noncommutative deformations and flops}, Duke Math.\ J.\, \textbf{165}, (2016), no.~8, 1397--1474.				

\bibitem[DW3]{DW3}
W.~Donovan and M.~Wemyss, \emph{Twists and braids for general 3-fold flops}, J.\ Eur.\ Math.\, Soc.\ (JEMS), \textbf{21}, (2019), no.~6, 1641--1701.	

\bibitem[DW4]{DW4}
W.~Donovan and M.~Wemyss, \emph{Noncommutative enhancements of contractions}, Adv.\ Math.\, \textbf{344}, (2019), 99--136.

\bibitem[HUY]{HUY}
D.~ Huybrechts, \emph{Fourier--Mukai transforms in algebraic geometry}, Oxford Mathematical Mono--graphs, The Clarendon Press, Oxford University Press, Oxford, (2006).,viii+307 pp.


\bibitem[HW]{HW} 
Y.~Hirano and M.~Wemyss, \emph{Faithful actions from hyperplane arrangements}, Geom.~\& Topol.\, \textbf{22}, (2018), no.~6, 3395--3433.			

\bibitem[IR]{IR}
O.~Iyama and I.~Reiten, \emph{Fomin--Zelevinsky mutation and tilting modules over Calabi--Yau algebras}, Amer.\ J.\ Math.\, \textbf{130} (2008), no.\ 4, 1087--1149.


\bibitem[IW2]{IW2}
O.~Iyama and M.~Wemyss, \emph{Maximal modifications and Auslander--Reiten duality for non-isolated singularities}, Invent.\ Math.\ , \textbf{197}, (2014), no.~3, 521--586.

\bibitem[lW4]{IW4}
O.~Iyama and M.~Wemyss, \emph{Singular derived categories of $\mathbb{Q}$--factorial terminalizations and maximal modification algebras}, Adv.\ Math.\, \textbf{261}, (2014), 85--121.

\bibitem[lW7]{IW7}
O.~Iyama and M.~Wemyss,\emph{Tits cones intersections and applications}, preprint, \textbf{1}, (2020),  no.~5, 7.

\bibitem[K]{K}
B.~Keller, \emph{Derived categories and tilting}, Handbook of tilting theory, London Math.\ Soc.\ Lecture
Note Ser.\, \textbf{332}, Cambridge Univ.\ Press,  Cambridge, (2007), pp.\ 49--104.

\bibitem[KS]{KS}
M.~Kashiwara and P.~Schapira, \emph{Categories and sheaves}, Grundlehren der Mathematischen Wis--senschaften, \textbf{332}, Springer--Verlag, Berlin, 2006. x+497 pp.

\bibitem[M]{M}
H.~Matsumura, \emph{Commutative ring theory},  Cambridge University press, (1969) no.~8.


\bibitem[N]{N}
C.~Namanya, \emph{Pure braid group presentations via longest elements}, J.\ Algebra, \textbf{628}, (2023), 1--21.


\bibitem[NW]{NW}
N.~Nabijou and M.~Wemyss, \emph{GV and GW invariants via the enhanced movable cone}, {\sf arXiv:2109.\\13289}.

\bibitem[R1]{R1}
M.~Reid, \emph{Minimal models of canonical 3-folds}, Algebraic varieties and analytic varieties (Tokyo, 1981), 131–180, Adv.\ Stud.\ Pure Math.\,  1, North--Holland, Amsterdam, 1983.

\bibitem[R2]{R2}
J.~Rickard, \emph{Morita theory for derived categories},  J.\ London Math.\ Soc.\, \textbf{2}, (1989), no.~3, 436--456.

\bibitem[R3]{R3}
J.~Rickard, \emph{Derived equivalences as derived functors},  J.\ London Math.\ Soc.\, \textbf{2}, (1991), no.~1, 37--48.

\bibitem[S]{S}
K.~Schwede, \emph{Generalized divisors and reflexive sheaves}, {\sf Notes available online at www. math. psu. edu/schwede/Notes/GeneralizedDivisors. pdf}, (2010).

\bibitem[T]{T}
Y.~Toda,  \emph{Stability conditions and crepant small resolutions}, Trans Am Math Soc \textbf{360}, no.~11, (2008), 6149--6178.

\bibitem[VdB]{VdB}
M.~Van den Bergh,\emph{Three--dimensional flops and noncommutative rings}, Duke Math.\ J.\, \textbf{122}, no.~3, (2004), 423--455.

\bibitem[W1]{W1}
M.~Wemyss	\emph{Flops and clusters in the homological minimal model programme},  Invent.\ Math.\, \textbf{211}, (2018), no.~2, 435--521.



				
			\end{thebibliography}
		\end{document}